\newtheorem{theorem}{Theorem}[section]
\newtheorem{Lemm}[theorem]{Lemma}
\newcommand{\p}{\partial}
\newcommand{\R}{\mathbb{R}}
\newcommand{\para}[1]{\left(#1\right)}
\begin{document}
\title[Magnetic Schr\"odinger equation with time-dependent coefficient]
{Stability estimate for an inverse problem for the Schr\"{o}dinger equation in a magnetic field with time-dependent coefficient}
\author[I.~Ben A\"{\i}cha]{Ibtissem Ben A\"{\i}cha}
\address{I.~Ben A\"{\i}cha. University of Aix-Marseille, 58 boulevard Charles Livon, 13284 Marseille,  France. \& University of Carthage,
Faculty of Sciences of Bizerte, 7021 Jarzouna Bizerte, Tunisia. \& LAMSIN, National Engineering School of Tunis, B.P. 37, 1002 Tunis, Tunisia}
\email{ibtissem.benaicha@enit.utm.tn} \maketitle
\begin{abstract}
 We study the  stability issue in the inverse problem of determining the
 magnetic field and the time-dependent electric potential  appearing in the
 Schr\"{o}dinger equation, from boundary observations. We prove in dimension
$3$ or greater, that the knowledge of the Dicrichlet-to-Neumann map stably
determines the magnetic field and the electric potential.\\
 \textbf{keywords:} Stability estimates, Schr\"odinger equation,
magnetic field, time-dependent electric potential, Dirichlet-to-Neumann
map.
\end{abstract}

\section{Introduction }
\subsection{Statement of the problem}
The present paper deals with the inverse problem of
 determining  the magnetic field and
the time-dependent electric potential in the magnetic Schr\"odinger equation
from the knowledge of boundary observations. Let $\Omega \subset\R^{n}$, $n\geq 3$, be a
bounded and simply connected domain  with $\mathcal{C}^{\infty}$ boundary
$\Gamma$. We denote by $\Delta_{A}$ the Laplace operator associated to the
real valued magnetic potential $A\in \mathcal{C}^{3}(\Omega)$ which is defined 
by
$$\Delta_{A}=\sum_{j=1}^{n}(\p_{j}+ia_{j})^{2}=\Delta+2iA\cdot \nabla+i\,\mbox{div}(A)-|A|^{2}.$$
 Given $T>0$, we denote by
 $Q=\Omega\times (0,T)$
and $\Sigma=\Gamma\times (0,T)$. We consider the following initial boundary
problem for the Schr\"odinger equation
\begin{equation}\label{Eq1}
\left\{
  \begin{array}{ll}
   ( i\p_{t}+\Delta_{A}+q(x,t))u=0, & \mbox{in} \,Q, \\
    u(.,0)=u_{0}, & \mbox{in}\, \Omega,  \\
    u=f, & \mbox{on} \,\Sigma,
  \end{array}
\right.
\end{equation}
 where the real valued  bounded function $q\in W^{2,\infty}(0,T; W^{1,\infty}(\Omega))$ is the electric
potential. We  define the Dirichlet-to-Neumann map associated to the magnetic
Schr\"odinger equation (\ref{Eq1}) as
$$\begin{array}{ccc}
\Lambda_{A,q}: H^{2}(\Omega)\times H^{2,1}(\Sigma)&\longrightarrow& H^{1}(\Omega)\times L^{2}(\Sigma)\\
(u_{0},f)&\longmapsto&\displaystyle\Big(u(.,T),(\p_{\nu}+iA\cdot \nu)u\displaystyle\Big),
\end{array}$$
were $\nu(x)$ denotes the unit outward normal to $\Gamma$ at $x$, and
$\p_{\nu} u$ stands for $\nabla u\cdot\nu$. Here  $H^{2,1}(\Sigma)$ is a
Sobolev space we shall make precise below. We aim to know whether the knowledge of
the Dirichlet-to-Neumann map $\Lambda_{A,q}$  can uniquely determine the
magnetic and the electric potentials.

The problem of recovering coefficients in the magnetic Schr\"odinger equation
was treated by many authors. In \cite{[MC]}, Bellassoued and Choulli
considered the problem of recovering the magnetic potential $A$  from the
knowledge of the Dirichlet-to-Neumann map
 $\Lambda_{A}(f)=(\p_{\nu}+i\nu.A)u$  \,for \,$f\in L^{2}(\Sigma),$
associated to the Schr\"odinger equation with zero initial data. As it was
noted in \cite{[E]}, the Dirichlet-to-Neumann map $\Lambda_{A}$ is invariant
under the gauge transformation of the magnetic potential. Namely, given
$\varphi \in \mathcal{C}^{1}(\overline{\Omega})$ such that
$\varphi_{|\Gamma}=0$,  we have
 \begin{equation}\label{eq25}
 e^{-i\varphi}\Delta_{A} e^{i\varphi}=\Delta_{A+\nabla\varphi},\,\,\,\,\, e^{-i\varphi}\Lambda_{A}e^{i\varphi}=\Lambda_{A+\nabla\varphi},
\end{equation}
 and $\Lambda_{A}=\Lambda_{A+\nabla \varphi}$. Therefore, the magnetic potential $A$ can not be uniquely determined by the Dirichlet-to-Neumann map $\Lambda_{A}$. In geometric terms, the magnetic potential $A$ defines the connection given by the one form $\alpha_{A}=\sum_{j=1}^{n}a_{j}dx_{j}.$  The non uniqueness manifested in  (\ref{eq25}) says that the best one  can hope to recover from the Dirichlet-to-Neumann map is the 2-form 
 $$d\alpha_{A}=\sum_{i,j=1}^{n}\Big(\frac{\p a_{i}}{\p x_{j}}-\frac{\p a_{j}}{\p x_{i}} \Big)dx_{j}\wedge dx_{i},$$

 called the magnetic field. Bellassoued and Choulli proved in dimension $n\geq 2$ that the knowledge of the Dirichlet-to-Neumann map $\Lambda_{A}$ H\"older stably determines  the magnetic field $d\alpha_{A}$. 

In the presence of a time-independent electric potential, the inverse problem of determining the magnetic field $d\alpha_{A}$ and the electric potential
 $q$ from boundary observations was first considered by Sun \cite{[sun]}, in the case $n\geq 3$. He showed that $d\alpha_{A}$ and $q$ can be
uniquely determined when $A\in W^{2,\infty}$, $q\in L^{\infty}$ and $d\alpha_{A}$ is small in the $L^{\infty}$ norm. In \cite{[H]}, Benjoud studied the inverse  problem of recovering the magnetic field $d\alpha_{A}$ and the electric potential $q$ 
 from the knowledge of the Dirichlet-to-Neumann map. Assuming that the
 potentials are known in a neighborhood of the boundary, she proved a stability estimate with respect to arbitrary partial boundary observations.\\

In the Riemannian case,  Bellassoued \cite{[MB]} proved recently a 
H\"older-type stability estimate in the recovery of  the magnetic field
$d\alpha_{A}$ and the time-independent electric potential $q$ from the
knowledge of the Dirichlet-to-Neumann map associated to the Shr\"odinger
equation with zero initial data.  In the absence of the magnetic potential
$A$, the problem of recovering the electric potential $q$ on a compact
Riemannian manifold was solved by Bellassoued and Dos Santos Ferreira \cite{[BD2]}.\\

In recent years significant progress have been made in the recovery of time-dependent and  time-independent coefficients appearing in 
  hyperbolic equations, see for instance \cite{[BJY],[RS],[Is]}. We also refer to the work of Bellassoued and Benjoud \cite{[BH]} in which
 they prove that the Dirichlet-to-Neumann map determines
uniquely the magnetic field  in a magnetic wave equation. In \cite{[ES]},
Eskin proved that the Dirichlet-to-Neumann map uniquely determines
coefficients depending analytically on the time variable. In \cite{[Stef]},
Stefanov proved that the time-dependent potential $q$ appearing in the wave
equation is uniquely determined from the knowledge of scattering data. In
\cite{[Ram]}, Ramm and Sj\"ostrand proved a uniqueness result in recovering
the time-dependent potential $q$ from the Dirichlet-to-Neumann map, on the
infinite time-space cylindrical domain $\R_{t}\times\Omega$.   As for
stability results, we refer to Salazar \cite{[S]},  Waters \cite{[Waters]},  
Ben A\"icha \cite{[ibtissem]} and Kian \cite{[Yv]}.

  The problem of determining time-dependent electromagnetic potentials
appearing in a Schr\"odinger equation was treated by Eskin \cite{[E]}.
 Using a geometric optics construction, he prove the uniqueness for
this problem in domains with obstacles. In unbounded domains and in the
absence of the magnetic potential, Choulli , Kian and Soccorsi \cite{MYE}
treated the problem of recovering the time-dependent scalar potential $q$
appearing in the Schr\"odinger equation from boundary observations. Assuming that the domain is a 
$1$-periodic  cylindrical waveguide,  they proved logarithmic
stability for this problem.

 In the present paper, we  address the uniqueness and the stability issues in the inverse problem
 of recovering the magnetic field $d\alpha_{A}$ and the time-dependent potential $q$ in the dynamical Schr\"odinger equation, from the knowledge
of the operator $\Lambda_{A,q}$. By means of techniques used in
\cite{[MB],[H]}, we prove a "$\log$-type" stability estimate in the recovery of
the magnetic field and a "$\log$-$\log$-$\log$-type" stability inequality in the
determination of the
time-dependent electric potential.

 From a physical view point, our inverse problem consists in determining the magnetic field $d\alpha_{A}$
induced by the magnetic potential $A$, and the electric potential $q$ of an
inhomogeneous medium by probing it with disturbances generated on the
boundary. Here we assume that the medium is quiet initially and $f$ denotes the 
disturbance used to probe the medium. Our data are
 the response $(\p_{\nu}+iA.\nu)u$ performed on the boundary $\Sigma$, and the measurement $u(.,T)$, 
for different choices of $f$ and for all possible initial data $u_{0}$.
\subsection{Well-posedness of the magnetic Schr\"odinger equation and main results }
In order to state our main results, we need the following existence and
uniqueness result. To this end, we  introduce the following Sobolev
space
$$H^{2,1}(\Sigma)=H^{2}(0,T;L^{2}(\Gamma))\cap L^{2}(0,T;H^{1}(\Gamma)),$$
equipped with the norm
$$\|f\|_{H^{2,1}(\Sigma)}=\|f\|_{H^{2}(0,T;L^{2}(\Gamma))}+\|f\|_{L^{2}(0,T;H^{1}(\Gamma))},$$
and  we set
$$H^{2,1}_{0}(\Sigma)=\{f\in H^{2,1}(\Sigma), \,\,f(.,0)=\p_{t}f(.,0)=0\}.$$
Then we have the following theorem.
\begin{theorem}\label{Thm1.1}
Let $T>0$ and let $q\in W^{1,\infty}(Q)$, $A\in\mathcal{C}^{1}(\Omega)$ and
$u_{0}\in H^{1}_{0}(\Omega)\cap H^{2}(\Omega)$. Suppose that $f\in
H^{2,1}_{0}(\Sigma)$. Then, there exists a unique solution $u\in
\mathcal{C}(0,T; H^{1}(\Omega))$ of the Shr\"odinger equation (\ref{Eq1}).
Furthermore, we have $\p_{\nu}u\in L^{2}(\Sigma)$ and there exists a constant
$C>0$ such that
$$\|u(.,t)\|_{H^{1}(\Omega)}+\|\p_{\nu}u\|_{L^{2}(\Sigma)}\leq C\para{\|u_{0}\|_{H^{2}(\Omega)}+\|f\|_{H^{2,1}(\Sigma)}}.$$
\end{theorem}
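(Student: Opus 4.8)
The plan is to establish well-posedness in two stages: first treat the problem with homogeneous boundary data by a standard semigroup/energy argument, then handle the inhomogeneous boundary data $f$ by lifting it to the interior and reducing to the homogeneous case with a modified source term. Write $u = v + w$, where $w$ is a suitable extension of $f$ into $Q$ and $v$ solves a problem with zero boundary data but a nonzero right-hand side coming from $w$. More precisely, I would choose a lifting operator $E\colon H^{2,1}_0(\Sigma)\to H^{2,1}(Q)$ (with $H^{2,1}(Q)=H^2(0,T;L^2(\Omega))\cap L^2(0,T;H^2(\Omega))$) such that $Ef = f$ on $\Sigma$, $Ef(\cdot,0)=\partial_t Ef(\cdot,0)=0$, and $\|Ef\|_{H^{2,1}(Q)}\le C\|f\|_{H^{2,1}(\Sigma)}$; such operators are classical for the cylinder $Q$ with smooth $\Gamma$. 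Setting $w=Ef$ and $v=u-w$, the function $v$ must solve
\begin{equation*}
(i\partial_t + \Delta_A + q)v = -(i\partial_t+\Delta_A+q)w =: F \quad\text{in } Q,\qquad v(\cdot,0)=u_0,\qquad v=0 \text{ on }\Sigma,
\end{equation*}
where $F\in L^2(0,T;L^2(\Omega))$ with $\|F\|_{L^2(0,T;L^2(\Omega))}\le C\|f\|_{H^{2,1}(\Sigma)}$ since $A\in\mathcal{C}^1(\Omega)$ and $q\in W^{1,\infty}(Q)$ are bounded (here the fact that $w$ has two time derivatives in $L^2 L^2$ and two spatial derivatives in $L^2 L^2$ is exactly what is needed to control $\Delta_A w$ and $\partial_t w$).

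For the reduced problem, the operator $-\Delta_A$ with domain $H^2(\Omega)\cap H^1_0(\Omega)$ is self-adjoint and bounded below on $L^2(\Omega)$ (after absorbing the bounded zeroth-order terms $i\,\mathrm{div}(A)-|A|^2$), so $i\Delta_A$ generates a strongly continuous unitary group, and adding the bounded time-dependent perturbation $q(x,t)$ keeps us in the framework of linear evolution equations with bounded perturbations. By the standard theory (e.g. Kato's theory for time-dependent generators, or a direct Duhamel/fixed-point argument), for $u_0\in H^1_0(\Omega)\cap H^2(\Omega)$ and $F$ as above with the requisite regularity, there is a unique solution $v\in \mathcal{C}(0,T;H^1(\Omega))$, and one obtains the energy estimate
\begin{equation*}
\|v(\cdot,t)\|_{H^1(\Omega)} \le C\Big(\|u_0\|_{H^2(\Omega)} + \|F\|_{L^2(0,T;L^2(\Omega))} + \text{(terms from }\partial_t F, \nabla F)\Big)
\end{equation*}
by differentiating the equation in $t$ and in $x$ and running Gronwall; the smoothness assumptions on $A$ and $q$ and the $W^{1,\infty}$ control are used precisely to close these commutator estimates. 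Undoing the substitution $u=v+w$ gives $u\in\mathcal{C}(0,T;H^1(\Omega))$ together with the claimed bound on $\|u(\cdot,t)\|_{H^1(\Omega)}$.

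Finally, for the hidden regularity statement $\partial_\nu u\in L^2(\Sigma)$ with the corresponding trace estimate, I would use a multiplier (Morawetz-type) identity: multiply the equation by $\overline{\chi\cdot\nabla \bar u}$ for a smooth vector field $\chi$ on $\overline\Omega$ with $\chi=\nu$ on $\Gamma$, integrate over $Q$, and integrate by parts. The boundary term produces $\int_\Sigma |\partial_\nu u|^2$, which is then bounded by interior $H^1$-norms of $u$ already controlled above, plus contributions from $f$ and $u_0$. I expect the main technical obstacle to be exactly this step: carefully carrying out the integration by parts for the magnetic operator $\Delta_A$ (rather than the plain Laplacian) and bookkeeping all the lower-order terms so that the right-hand side only involves quantities already estimated — this is where the gauge term $iA\cdot\nu$ in the definition of the co-normal derivative and the regularity $A\in\mathcal{C}^1$ must be handled with care. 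The rest of the argument is routine once the lifting operator and the multiplier identity are set up.
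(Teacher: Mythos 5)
Your reduction hinges on a lifting operator $E\colon H^{2,1}_0(\Sigma)\to H^2(0,T;L^2(\Omega))\cap L^2(0,T;H^2(\Omega))$ with $Ef|_\Sigma=f$, and this is the step that fails: the trace space of $L^2(0,T;H^2(\Omega))$ is $L^2(0,T;H^{3/2}(\Gamma))$, whereas $f\in H^{2,1}(\Sigma)=H^2(0,T;L^2(\Gamma))\cap L^2(0,T;H^1(\Gamma))$ has only $H^1(\Gamma)$ spatial regularity (interpolating the two components never gives more than $H^1(\Gamma)$ in space), so no such ``classical'' extension exists for this data class. Consequently $F=-(i\partial_t+\Delta_A+q)Ef\in L^2(Q)$ is not justified as stated; and the extra regularity you implicitly need to close the $\mathcal{C}(0,T;H^1(\Omega))$ bound (you differentiate the equation in $t$, which requires something like $\partial_t F\in L^1(0,T;L^2(\Omega))$, i.e.\ $\Delta_A\partial_t Ef\in L^1(0,T;L^2(\Omega))$) is even further out of reach, since even if the lifting existed, $w\in H^2(0,T;L^2)\cap L^2(0,T;H^2)$ gives no control of $\partial_t w$ in $L^2(0,T;H^2)$. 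A harmonic-in-space lifting at each fixed time would repair the first point ($\Delta w=0$, $\nabla w\in L^2(Q)$) but not the second, so the interior-lifting route does not close as written.

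The paper avoids the lifting altogether: it splits $u=u_1+u_2$, where $u_1$ solves the free magnetic equation $(i\partial_t+\Delta_A)u_1=0$ with $u_1=f$ on $\Sigma$ and zero initial data --- for which the inhomogeneous boundary value result of Bellassoued (Theorem 1.1 of \cite{[MB]}) already provides $u_1\in\mathcal{C}^1(0,T;H^1(\Omega))$, $\partial_\nu u_1\in L^2(\Sigma)$ and bounds by $\|f\|_{H^{2,1}(\Sigma)}$ --- while $u_2$ solves the homogeneous Dirichlet problem with source $-qu_1$, which lies in $W^{1,1}(0,T;L^2(\Omega))$ thanks to $q\in W^{1,\infty}(Q)$, so Lemma 2.1 of \cite{MYE} applies and yields $u_2\in\mathcal{C}^1(0,T;L^2(\Omega))\cap\mathcal{C}(0,T;H^2(\Omega)\cap H^1_0(\Omega))$. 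Your multiplier idea for the normal derivative is indeed what the paper uses, but it is applied only to $u_2$, whose vanishing trace kills the awkward boundary terms (the hidden regularity of $u_1$ being inherited from \cite{[MB]}); applied to the full $u$ as you propose, you would additionally have to track boundary contributions involving $\nabla_\tau f$ and $\partial_t f$. To fix your argument, either invoke (or reprove) the inhomogeneous boundary-value result for the free magnetic Schr\"odinger equation, or replace your extension by a lifting adapted to this data class (for instance the solution of the free equation itself), rather than an extension into $H^2(0,T;L^2(\Omega))\cap L^2(0,T;H^2(\Omega))$.
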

As a corollary, the Dirichlet-to-Neumann map $\Lambda_{A,q}$ is 
bounded from $ H^{2}(\Omega)\times H^{2,1}(\Sigma)$ to $H^{1}(\Omega)\times
L^{2}(\Sigma).$
 The proof of Theorem\ref{Thm1.1} is given in Appendix A.\\
 In order to express the main results of
this article, we first define the following admissible sets of unknown
coefficients $A$ and $q$: for $\varepsilon >0$, $M>0$, we set
$$\mathcal{A}_{\varepsilon}=\{A\in C^{3}(\Omega),\,\,\,\|A\|_{W^{3,\infty}(\Omega)}\leq \varepsilon,\,\,\,\,\,\,\,A_{1}=A_{2}\,\,\mbox{in}\,\Gamma\},$$
$$\mathcal{Q}_{M}=\{q\in \mathcal{X}=W^{2,\infty}(0,T;W^{1,\infty}(\Omega)),\,\,\,\|q\|_{\mathcal{X}}\leq M,\,\,\,\,\,\,q_{1}=q_{2}\,\,\,\,\mbox{in}\,\Gamma\}.$$
Our first main result claims stable determination of the magnetic field
$d\alpha_{A}$, from full boundary measurement $\Lambda_{A,q}$ on the
cylindrical domain $Q$.
\begin{theorem}\label{Thm1} Let $\alpha>\frac{n}{2}+1$.  Let $q_{i}\in
\mathcal{Q}_{M}$, $A_{i}\in \mathcal{A}_{\varepsilon}$, such that
$\|A_{i}\|_{H^{\alpha}(\Omega)}\leq M$, for $i=1,\,2$. Then, there exist three
constants $C>0$ and $\mu,s\in(0,1),$ such that we have
$$\|d{\alpha_{A_{1}}}-d{\alpha_{A_{2}}}\|_{L^{\infty}(\Omega)}\leq C\para{\|\Lambda_{A_{2},q_{2}}-\Lambda_{A_{1},q_{1}}\|^{1/2}+|\log \|\Lambda_{A_{2},q_{2}}-\Lambda_{A_{1},q_{1}}\||^{-\mu}}^{s}.$$
Here $C$  depends only on $\Omega$, $\varepsilon$, $M$ and $T$.
\end{theorem}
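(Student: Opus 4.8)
The plan is to follow the by-now-standard route for stability in inverse problems for Schr\"odinger-type equations: construct geometric optics (GO) solutions concentrated along light rays (here, straight lines, since the metric is Euclidean), plug two such families into a suitable integral identity relating $\Lambda_{A_1,q_1}-\Lambda_{A_2,q_2}$ to the difference of coefficients, extract the X-ray transform of the magnetic field (a first-order quantity) together with an error controlled by the boundary data, and finally invert the X-ray transform with a stability estimate, using an interpolation/mollification argument to convert the weak-norm control into an $L^\infty$ bound on $d\alpha_{A}$. First I would set $A=A_1-A_2$, $q=q_1-q_2$, extended by zero outside $\Omega$, and derive the basic orthogonality identity: if $u_2$ solves the equation with coefficients $(A_2,q_2)$ and $u_1$ solves the backward/adjoint equation with $(A_1,q_1)$, then
$$
\int_Q \Big(2iA\cdot\nabla u_2 + \big(i\,\mathrm{div}(A)-|A_1|^2+|A_2|^2+q\big)u_2\Big)\overline{u_1}\,dx\,dt
= \big\langle (\Lambda_{A_2,q_2}-\Lambda_{A_1,q_1})(u_0,f),\,(\cdot)\big\rangle,
$$
the right-hand side being bounded by $C\|\Lambda_{A_2,q_2}-\Lambda_{A_1,q_1}\|$ times the norms of the GO data.

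Next I would build the GO solutions. Fix a unit vector $\omega\in\mathbb{S}^{n-1}$ and a parameter $\tau>0$ (or $\lambda$), and seek $u_j(x,t)=e^{i\tau(x\cdot\omega + \text{phase in }t)}\big(b_j(x,\omega)+r_j(x,t;\tau)\big)$ where the amplitude $b_2$ is chosen to solve the transport equation $\omega\cdot(\nabla + iA_2)b_2 = 0$ along the lines in direction $\omega$ — this is exactly where the magnetic potential enters, through a line integral $b_2 = \exp\big(-i\int_{-\infty}^{0} \omega\cdot A_2(x+s\omega)\,ds\big)$ — and similarly $b_1$ with $A_1$; the remainders $r_j$ are shown to satisfy $\|r_j\|\leq C/\tau$ using a Carleman-type or energy estimate for the (magnetic) Schr\"odinger operator, together with the regularity hypotheses $A_i\in\mathcal C^3$, $q_i\in W^{2,\infty}(0,T;W^{1,\infty})$. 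Inserting these into the identity and letting $\tau\to\infty$, the leading term of order $\tau$ forces (after integrating in $t$ and exploiting the gauge-invariant combination) an expression of the form
$$
\Big|\int_{\mathbb{R}} (\omega\cdot A)(x+s\omega)\,ds\Big| \;\leq\; C\Big(\tau^{-1} + \tau^{\,\kappa}\,\|\Lambda_{A_2,q_2}-\Lambda_{A_1,q_1}\|\Big),\qquad \text{all }x\perp\omega,
$$
for some fixed $\kappa>0$ coming from the Sobolev norms of the GO data; optimizing in $\tau$ yields a bound of logarithmic type on the X-ray transform of $\omega\cdot A$, hence on $\omega\cdot\widehat A$ in Fourier, for all $\omega$. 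One then recovers the full two-form $d\alpha_A$: the antisymmetrized derivatives $\xi_j\widehat A_k(\xi)-\xi_k\widehat A_j(\xi)$ are exactly the pieces not killed by the gauge ambiguity, and they are controlled by the same X-ray data (this is the step where $\Omega$ simply connected and $A_1=A_2$ on $\Gamma$ are used, so the gauge can be normalized).

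Finally I would pass from this estimate — which lives in a negative Sobolev norm, say $\|d\alpha_A\|_{H^{-1}(\Omega)}\leq C\,|\log\|\Lambda_{A_2,q_2}-\Lambda_{A_1,q_1}\||^{-\mu_0}$ — to the claimed $L^\infty$ bound, using the a priori bound $\|A_i\|_{H^\alpha}\leq M$ with $\alpha>\frac n2+1$ (so that $d\alpha_{A_i}\in H^{\alpha-1}\hookrightarrow L^\infty$ by Sobolev embedding since $\alpha-1>\frac n2$) via the interpolation inequality $\|v\|_{L^\infty}\leq C\|v\|_{H^{-1}}^{\theta}\|v\|_{H^{\alpha-1}}^{1-\theta}$ for a suitable $\theta\in(0,1)$; this produces the exponent $s\in(0,1)$ and the structure $\big(\|\Lambda\|^{1/2}+|\log\|\Lambda\||^{-\mu}\big)^s$ in the statement (the $\|\Lambda\|^{1/2}$ term absorbing the polynomially-growing GO factor against a square-root loss). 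The main obstacle I anticipate is the remainder estimate for the GO solutions: because $q$ is only time-dependent and the Schr\"odinger operator is not elliptic in $(x,t)$, one cannot use a purely spatial Carleman estimate but must track the time variable carefully — constructing the correct phase in $t$ and proving $\|r_j\|=O(\tau^{-1})$ uniformly, with controlled dependence of the data norms on $\tau$, so that the final optimization in $\tau$ actually closes. A secondary technical point is bookkeeping the $|A_1|^2-|A_2|^2$ and $q$ terms, which are lower order in $\tau$ and must be shown not to contaminate the leading-order identification of the X-ray transform.
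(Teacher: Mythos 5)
Your overall skeleton (a Green-formula identity for the difference of DN maps, geometric optics solutions, then interpolation between $H^{-1}$ and $H^{\alpha-1}$ plus Sobolev embedding) matches the paper, but the core of your construction contains a genuine gap. You use real-phase WKB solutions $e^{i(\tau x\cdot\omega-\tau^{2}t)}(b_j+r_j)$ with amplitudes $b_j=\exp\bigl(-i\int_{-\infty}^{0}\omega\cdot A_j(x+s\omega)\,ds\bigr)$, and then assert that the identity yields the linear X-ray transform of $\omega\cdot A$, ``hence $\omega\cdot\widehat{A}$ in Fourier''. With real phases this is not what comes out: the leading term is $2\tau\int_Q \omega\cdot A\;b_2\overline{b_1}\,e^{-ix\cdot\xi}\,dx\,dt$, and the weight $b_2\overline{b_1}$ depends on the unknown difference $A$ itself. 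Since $\omega\cdot A\,b_2\overline{b_1}$ is an exact derivative along the line, what you actually control is $\int_{\omega^{\perp}}e^{-ix'\cdot\xi}\bigl(e^{\,i\int_{\mathbb{R}}\omega\cdot A(x'+s\omega)\,ds}-1\bigr)\,dx'$, i.e.\ a nonlinear (exponentiated) ray transform, not $\widehat{\omega\cdot A}(\xi)$. Passing from this to linear control of $d\alpha_A$ is exactly the classical obstruction in the magnetic problem (Sun needed smallness of $d\alpha_A$, Eskin--Ralston used analyticity), and you supply no mechanism for it. The paper sidesteps it entirely by taking complex isotropic vectors $\rho=\sigma\omega$, $\omega=\omega_{\Re}+i\omega_{\Im}$ with $\rho\cdot\rho=0$, phase correction $\phi=N_{\omega}^{-1}(-\omega\cdot A)$, and invoking Salo's identity (Lemma 2.2), $\int\omega\cdot A\,e^{iN_{\omega}^{-1}(-\omega\cdot A)}e^{i\xi\cdot x}dx=\int\omega\cdot A\,e^{i\xi\cdot x}dx$, which removes the unknown weight exactly; that identity is special to complex $\omega$ and has no analogue for your real-phase amplitudes.

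There is also a quantitative mismatch: with unimodular real phases the data norms grow only polynomially in $\tau$, so optimizing $\tau^{-1}+\tau^{\kappa}\|\Lambda_{A_2,q_2}-\Lambda_{A_1,q_1}\|$ would give a H\"older bound, not the ``logarithmic type'' bound you claim, and you never actually derive the stated structure $\bigl(\|\Lambda_{A_2,q_2}-\Lambda_{A_1,q_1}\|^{1/2}+|\log\|\Lambda_{A_2,q_2}-\Lambda_{A_1,q_1}\||^{-\mu}\bigr)^{s}$. In the paper that structure comes from the exponential factor $e^{C\sigma}$ (the complex-phase solutions have exponentially large initial/boundary norms, Lemma 3.3), the low/high frequency splitting of $\|\sigma_{j,k}\|_{H^{-1}}$, and the choice $\sigma=\frac{1}{C_0}|\log\|\Lambda_{A_2,q_2}-\Lambda_{A_1,q_1}\||$. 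Your final interpolation step coincides with the paper's and is fine; note also that $A_1=A_2$ on $\Gamma$ is used only to kill boundary terms in the integration by parts, not to normalize a gauge --- the antisymmetrization in $\xi$ alone extracts $d\alpha_A$.
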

Next, assuming that the magnetic potential $A$ is divergence free,  we
can stably retrieve the electric potential.
\begin{theorem}\label{Thm2}
Let $q_{i}\in \mathcal{Q}_{M}$, $A_{i}\in \mathcal{A}_{\varepsilon}$,   for
$i=1,\,2$. Assume that div $A_{i}=0$. Then there exist three constants $C>0$, and 
$m, \mu\in(0,1)$, such that we have
$$\|q_{1}-q_{2}\|_{H^{-1}(Q)}\leq
C \Phi_{m}(\|\Lambda_{A_{2},q_{2}}-\Lambda_{A_{1},q_{1}}\|),$$

where
$$
\Phi_{m}(\|\Lambda_{A_{2},q_{2}}-\Lambda_{A_{1},q_{1}}\|)=
\left\{
\begin{array}{lll}
 | \log\,|\log|\log\|\Lambda_{A_{2},q_{2}}-\Lambda_{A_{1},q_{1}}\||^{\mu}  |\,|^{-1} &\,\mbox{if}\,\,\|\Lambda_{A_{2},q_{2}}-\Lambda_{A_{1},q_{1}}\|<m,\\
 \\
\displaystyle\frac{1}{m} \|\Lambda_{A_{2},q_{2}}-\Lambda_{A_{1},q_{1}}\|   &\,\mbox{if}\,\,\|\Lambda_{A_{2},q_{2}}-\Lambda_{A_{1},q_{1}}\|\geq m.
\end{array}
\right.
$$
Here $C$  depends on  $\Omega$, $M$, $\varepsilon$ and $T$.

\end{theorem}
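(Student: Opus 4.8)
\emph{Overall strategy.} The proof combines geometric optics (GO) solutions for the magnetic Schr\"odinger operator with an integral identity and a quantitative analytic continuation, in the spirit of \cite{[MB],[H]}; the feature specific to Theorem~\ref{Thm2} is that the magnetic field, recovered only with a logarithmic rate in Theorem~\ref{Thm1}, must be fed into the reconstruction of the time-dependent $q$ without ruining the estimate. Write $\eta:=\norm{\Lambda_{A_2,q_2}-\Lambda_{A_1,q_1}}$. For a large parameter $\sigma$ one builds, for each admissible frequency $(\xi,\tau)$ with $\abs{(\xi,\tau)}\lesssim\sigma$, GO solutions
$$u_j(x,t)=e^{i\sigma\varphi_j(x,t)}\para{a_j(x,t)+R_{j,\sigma}(x,t)},\qquad j=1,2,$$
where the $\varphi_j$ are (complex, essentially linear) phases attached to the symbol $-\tau-|\xi|^2$ of $i\p_t+\Delta$, whose imaginary parts produce a factor of size $e^{C\sigma}$ on $\Sigma$ and on $\Omega\times\set{0,T}$; the amplitudes $a_j$ solve magnetic transport equations that cancel the first-order magnetic terms; $\norm{R_{j,\sigma}}\to0$ like a negative power of $\sigma$ by an energy/Carleman estimate, which is where Theorem~\ref{Thm1.1} and its refinements enter; and $\varphi_1,\varphi_2$ are chosen ``conjugate'' so that $u_2\overline{u_1}$ carries the prescribed Fourier mode $e^{-i(x\cdot\xi+t\tau)}$ while its overall exponential factor stays bounded.

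\emph{Integral identity and the magnetic reduction.} Let $u_2$ solve $(i\p_t+\Delta_{A_2}+q_2)u_2=0$ in $Q$ with Cauchy data $(u_0,f)$ on $(\Omega\times\set0)\cup\Sigma$ given by the above ansatz, let $\widetilde{u}$ solve the same problem with $(A_1,q_1)$, and pair $w:=u_2-\widetilde{u}$ (which has zero Cauchy data) against a GO solution $u_1$ of the adjoint equation for $(A_1,q_1)$. An integration by parts, using the expansion of $\Delta_A$ recalled in the introduction, yields
$$\int_Q\Big(2i(A_1-A_2)\cdot\nabla u_2+i\,\mbox{div}(A_1-A_2)u_2-(\abs{A_1}^2-\abs{A_2}^2)u_2+(q_1-q_2)u_2\Big)\overline{u_1}\,dx\,dt=\big\langle(\Lambda_{A_2,q_2}-\Lambda_{A_1,q_1})(u_0,f),\,g\big\rangle,$$
whose right-hand side is $\le Ce^{C\sigma}\eta$ (each GO trace being of size $e^{C\sigma}$ in the relevant norms). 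By the hypothesis $\mbox{div}\,A_i=0$ the second term vanishes. Now I invoke Theorem~\ref{Thm1}: it bounds $\norm{d\alpha_{A_1}-d\alpha_{A_2}}_{L^\infty(\Omega)}$ by a logarithmic modulus $\omega_1(\eta)$ of $\eta$; combining this with $\mbox{div}(A_1-A_2)=0$, $A_1=A_2$ on $\Gamma$, simple connectedness of $\Omega$ and a Gaffney/elliptic estimate for one-forms gives $\norm{A_1-A_2}_{H^1(\Omega)}\le C\,\omega_1(\eta)$. Since the conjugate choice of phases makes $(\nabla u_2)\overline{u_1}$ bounded by $C\sigma$ in sup-norm, the two remaining magnetic terms are $O(\sigma\,\omega_1(\eta))$ and may be moved to the right-hand side.

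\emph{Fourier information on $q_1-q_2$ and conclusion.} Letting $\sigma\to\infty$ with $(\xi,\tau)$ fixed, $\norm{R_{j,\sigma}}\to0$ and the explicit amplitudes show that $\int_Q(q_1-q_2)u_2\overline{u_1}$ converges, after division by a non-vanishing factor, to $\widehat{q_1-q_2}(\xi,\tau)$, where $q_1-q_2$ is extended by $0$ outside $Q$; keeping $\sigma$ finite one obtains
$$\abs{\widehat{q_1-q_2}(\xi,\tau)}\le C\Big(e^{C\sigma}\eta+\sigma^{-\kappa_0}+\sigma\,\omega_1(\eta)\Big),\qquad \abs{(\xi,\tau)}\lesssim\sigma,$$
for some $\kappa_0>0$, the last term coming from the magnetic reduction. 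Since $q_1-q_2$ is compactly supported, $\widehat{q_1-q_2}$ is entire of exponential type and globally bounded by $CM$ (from $q_i\in\mathcal{Q}_M$); a quantitative analytic continuation estimate for such functions transfers the bound from the ball of radius $\sigma$ to a larger ball, after which splitting $\norm{q_1-q_2}_{H^{-1}(Q)}$ at that radius --- using the a priori bound $\norm{q_1-q_2}_{H^1(Q)}\le CM$ for the high-frequency tail --- and optimizing first the continuation radius and then $\sigma$ (so that $e^{C\sigma}\eta$ stays small while $\sigma\,\omega_1(\eta)\to0$, which forces $\sigma$ to stay well below $\abs{\log\eta}$) yields $\norm{q_1-q_2}_{H^{-1}(Q)}\le C\,\Phi_m(\eta)$ with the stated triple-logarithmic modulus: one logarithm is lost because the GO solutions have size $e^{C\sigma}$, one in the analytic continuation step, and one more in propagating the already logarithmic estimate $\omega_1(\eta)$ of Theorem~\ref{Thm1} through the magnetic term $O(\sigma\,\omega_1(\eta))$. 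For $\eta\ge m$ the inequality is trivial from $\norm{q_1-q_2}_{H^{-1}(Q)}\le CM$, which is the linear branch of $\Phi_m$.

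\emph{Main obstacle.} The delicate point is precisely the coupling of the two inverse problems in the second paragraph: the logarithmic bound of Theorem~\ref{Thm1} must be inserted into the $q$-identity through $O(\sigma\,\omega_1(\eta))$ without destroying it, and it is this --- rather than the analytic continuation itself --- that forces the very weak triple-logarithmic rate and makes the divergence-free condition and the agreement of the potentials on $\Gamma$ essential, since these are what make the Gaffney estimate, hence the sharp control of $\norm{A_1-A_2}$, available. A secondary, more technical, difficulty is the construction of the GO solutions with an explicit remainder rate for the magnetic Schr\"odinger operator when $q$ is only $W^{2,\infty}$ in the time variable, which is where Theorem~\ref{Thm1.1} is used.
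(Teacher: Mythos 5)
Your overall strategy coincides with the paper's: geometric optics solutions for the magnetic Schr\"odinger operator, the Green-formula identity obtained by pairing $u_2-\widetilde u$ against an adjoint GO solution with right-hand side bounded by $Ce^{C\sigma}\eta$, elimination of the divergence term via the hypothesis $\mathrm{div}\,A_i=0$, control of the remaining magnetic term by inserting Theorem \ref{Thm1} through the curl estimate of Lemma \ref{rot} (your ``Gaffney'' step, available because $A_1-A_2$ is divergence free, vanishes on $\Gamma$ and $\Omega$ is simply connected), then a quantitative analytic continuation of $\widehat q$ followed by a low/high frequency splitting and the optimization $\sigma\sim e^{N\alpha}$, $\alpha\sim\log\log|\log\eta|$ producing the triple logarithm; your accounting of the three logarithms matches the paper's.

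There is, however, a genuine gap in your Fourier step. You assert that the GO identity gives $|\widehat{q_1-q_2}(\xi,\tau)|\le C(e^{C\sigma}\eta+\sigma^{-\kappa_0}+\sigma\,\omega_1(\eta))$ for all $|(\xi,\tau)|\lesssim\sigma$, and that analytic continuation then only serves to pass ``from the ball of radius $\sigma$ to a larger ball''. Neither claim is justified. In this construction the time frequency carried by $u_2\overline{u_1}$ is $\rho_2\cdot\rho_2-\overline{\rho_1}\cdot\overline{\rho_1}=2y\cdot\xi$, and the shift $y$ must remain in $B(0,1)$, independent of $\sigma$, because it enters linearly the transport and remainder terms and the $O(1/\sigma)$ bounds on $w_j$ would otherwise be lost. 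Consequently the identity only controls $\widehat q(\xi,\tau)$ on the cone-like region $E_\alpha=\{|\xi|<2\alpha,\ |\tau|<2|\xi|\}$ and gives no information near the $\tau$-axis, i.e.\ for mostly time-oscillating frequencies. Filling in this missing region is exactly the role of the conditional stability result for analytic continuation (Lemma \ref{Lm3.6}, applied to $F_\alpha(\xi,\tau)=\widehat q(\alpha(\xi,\tau))$ on $O=E_1\cap B(0,1)$), and it is this step, with its loss $e^{\alpha(1-\mu)}$ and H\"older power $\mu$, that forces the relation $\sigma\sim e^{N\alpha}$ and hence one of your three logarithms. If your version were correct --- direct control on a full ball of radius comparable to $\sigma$ --- the continuation step would be superfluous and a strictly better (at most double-logarithmic) rate would follow, contradicting your own final rate; so the constraint $|\tau|<2|\xi|$ is not a cosmetic omission but the structural reason the argument is organized as it is. A secondary inaccuracy: the remainder estimates for the GO solutions come from the H\"ormander-type parametrix construction of Section \ref{Sec2} and Appendix B, not from Theorem \ref{Thm1.1}, which is used only for well-posedness, the definition of $\Lambda_{A,q}$ and the auxiliary solution $v$.
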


 The text is organized as follows. Section \ref{Sec2} is devoted to the construction of special
geometrical optics solutions to the Shr\"odinger equation (\ref{Eq1}). Using
these particular solutions, we establish in sections \ref{Sec3} and \ref{Sec4}, two
stability estimates for the magnetic field and the electric potential. In
Appendix A, we develop the proof of Theorem\ref{Thm1.1}. Appendix B contains the proof of several 
 technical results used in the derivation of the main results.
\section{Preliminaries and geometrical optics solutions }\label{Sec2}
The present section is devoted to the construction of suitable geometrical
optics solutions, which are key ingredients in the proof of our main results.
We start  by collecting several known lemmas from \cite{[R1],[R2]}.
\subsection{Preliminaries}
Let $\omega=\omega_{\Re}+i\omega_{\Im}$ be a vector with
$\omega_{\Re},\,\omega_{\Im}\in \mathbb{S}^{n-1}$, and
$\omega_{\Re}\cdot\omega_{\Im}=0$. We shall see that the differential
operator $N_{\omega}=\omega\cdot\nabla$ is invertible
and we have
$$N_{\omega}^{-1}(g)(x)=\displaystyle\frac{1}{({2\pi})^{n}}\displaystyle\int_{\R^{n}}e^{-ix\cdot\xi}\displaystyle
\para{\displaystyle\frac{\hat{g}(\xi)}{\omega\cdot\xi}}d\xi=\frac{1}{2\pi}\int_{\R^{2}}\frac{1}{y_{1}+iy_{2}}g(x-y_{1}\omega_{\Re}-y_{2}\omega_{\Im})
\,dy_{1}\,dy_{2}.$$ Notice that the differential operator 
$\overline{\p}$ corresponds to $N_{\omega}$ with $\omega=(0,1)$.
\begin{Lemm}\label{Lm2.1}
Let $r>0$, $k>0$ and let $g \in W^{k,\infty}(\R^{n})$ be such that Supp $g
\subseteq B(0,r)=\{x\in\R^{n},\,\,\,|x|\leq r\}$. Then the function
 $\phi = N_{\omega}^{-1}(g) \in W^{k,\infty}(\R^{n})$ solves $N_{\omega}(\phi)=g$, and satisfies the estimate
$$\|\phi\|_{W^{k,\infty}(\R^{n})}\leq C \,\|g\|_{W^{k,\infty}(\R^{n})},$$
where $C$ is a positive constant depending only on $r$.
\end{Lemm}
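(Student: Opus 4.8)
\noindent\emph{Proof strategy.} The plan is to argue directly from the convolution formula for $N_{\omega}^{-1}$ recalled above, namely
$$\phi(x)=N_{\omega}^{-1}(g)(x)=\frac{1}{2\pi}\int_{\R^{2}}\frac{1}{y_{1}+iy_{2}}\,g\para{x-y_{1}\omega_{\Re}-y_{2}\omega_{\Im}}\,dy_{1}\,dy_{2},$$
exploiting two elementary facts: the kernel $\para{y_{1}+iy_{2}}^{-1}$ is locally integrable on $\R^{2}$ (in polar coordinates $\abs{y_{1}+iy_{2}}^{-1}\,dy_{1}\,dy_{2}=d\rho\,d\theta$), and, because $\operatorname{Supp} g\subseteq B(0,r)$, for each fixed $x$ the integrand above is carried by a disk of radius $r$ in the $(y_{1},y_{2})$-plane (a disk that moves with $x$ and is empty for $x$ far from the plane $\operatorname{span}(\omega_{\Re},\omega_{\Im})$). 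That $\phi$ so defined solves $N_{\omega}\phi=g$ is precisely the inversion statement borrowed from \cite{[R1],[R2]} (it is read off the Fourier representation); the substance of the lemma is the regularity $\phi\in W^{k,\infty}$ together with the quantitative bound, and that is what I would prove.

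I would first settle the case $k=0$. Fix $x\in\R^{n}$ and write $x=x'+x''$, with $x'$ the orthogonal projection of $x$ onto $\Pi=\operatorname{span}\para{\omega_{\Re},\omega_{\Im}}$ and $x''\perp\Pi$. Since $\omega_{\Re}$ and $\omega_{\Im}$ are orthonormal, the point $x-y_{1}\omega_{\Re}-y_{2}\omega_{\Im}$ has $\Pi^{\perp}$-component equal to $x''$, hence it lies in $B(0,r)$ only if $\abs{x''}\le r$; in particular $\phi(x)=0$ whenever $\abs{x''}>r$. When $\abs{x''}\le r$, writing $x'=a_{1}\omega_{\Re}+a_{2}\omega_{\Im}$, orthonormality shows the integrand vanishes unless $(y_{1}-a_{1})^{2}+(y_{2}-a_{2})^{2}\le r^{2}-\abs{x''}^{2}\le r^{2}$, so that
$$\abs{\phi(x)}\le\frac{\norm{g}_{L^{\infty}(\R^{n})}}{2\pi}\int_{\abs{(y_{1},y_{2})-(a_{1},a_{2})}\le r}\frac{dy_{1}\,dy_{2}}{\abs{(y_{1},y_{2})}}\le Cr\,\norm{g}_{L^{\infty}(\R^{n})},$$
the last inequality being the elementary fact that the integral of the radially decreasing weight $\abs{y}^{-1}$ over a disk of radius $r$ is, up to a universal constant, largest when the disk is centered at the origin, where it equals $2\pi r$. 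This yields $\norm{\phi}_{L^{\infty}(\R^{n})}\le Cr\,\norm{g}_{L^{\infty}(\R^{n})}$.

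For general $k$, I would write $\phi=K\ast g$ with $K$ a fixed $L^{1}_{\mathrm{loc}}$ kernel (carried by the two-plane $\Pi$) and move the differentiations onto the compactly supported factor $g$: for every multi-index $\beta$ with $\abs{\beta}\le k$ one has $\p^{\beta}\phi=N_{\omega}^{-1}\para{\p^{\beta}g}$, where $\p^{\beta}g$ is again supported in $B(0,r)$ and satisfies $\norm{\p^{\beta}g}_{L^{\infty}(\R^{n})}\le\norm{g}_{W^{k,\infty}(\R^{n})}$. Applying the $k=0$ estimate to each $\p^{\beta}g$ and summing over $\abs{\beta}\le k$ gives $\norm{\phi}_{W^{k,\infty}(\R^{n})}\le Cr\,\norm{g}_{W^{k,\infty}(\R^{n})}$, the constant depending only on $r$ once $n$ and $k$ are fixed. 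The only point that needs genuine care is the first step — the geometric bookkeeping confining the $(y_{1},y_{2})$-integration to a disk of radius $r$, combined with the two-dimensional local integrability of $\abs{y}^{-1}$; the interchange of $\p^{\beta}$ with the integral is then routine, since $K\in L^{1}_{\mathrm{loc}}$ and $g$ has compact support.
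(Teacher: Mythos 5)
Your argument is correct, and it is worth noting that the paper itself offers no proof of this lemma: it is simply quoted as a known result from \cite{[R1],[R2]}, so there is nothing internal to compare against. Your proof is essentially the standard one behind those references: since $\omega_{\Re},\omega_{\Im}$ are orthonormal, the support condition on $g$ confines the $(y_{1},y_{2})$-integration to a disk of radius $r$ (centered at the coordinates of the projection of $x$ onto $\mathrm{span}(\omega_{\Re},\omega_{\Im})$), and the two-dimensional local integrability of $|y|^{-1}$ then gives the $L^{\infty}$ bound with a constant of order $r$; the rearrangement step can even be replaced by the cruder splitting of the shifted disk into its intersection with $B(0,r)$ (contributing at most $2\pi r$) and its complement, where $|y|^{-1}\leq r^{-1}$ (contributing at most $\pi r$). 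Passing to derivatives by writing $\phi=K\ast g$ and moving $\p^{\beta}$ onto $g$ is legitimate for $g\in W^{k,\infty}$ with compact support (justify it distributionally or by difference quotients plus dominated convergence), and it correctly reduces everything to the $k=0$ case; implicitly you treat $k$ as an integer, which is clearly the paper's intent. The only piece you defer rather than prove is the identity $N_{\omega}\phi=g$ itself; if you want the lemma fully self-contained, observe that $\omega\cdot\nabla_{x}\,g(x-y_{1}\omega_{\Re}-y_{2}\omega_{\Im})=-(\p_{y_{1}}+i\p_{y_{2}})\big[g(x-y_{1}\omega_{\Re}-y_{2}\omega_{\Im})\big]$ and that $\frac{1}{2\pi}(y_{1}+iy_{2})^{-1}$ is a fundamental solution of $\p_{y_{1}}+i\p_{y_{2}}$ in $\R^{2}$, so the identity follows by one integration by parts in $(y_{1},y_{2})$.
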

We recall  from \cite{[Sa]}, the following technical result.
\begin{Lemm}\label{Lm2.2}
Let $A\in C_{c}(\R^{n})$, $\xi\in\R^{n}$, and
$\omega=\omega_{\Re}+i\omega_{\Im}$ with
$\omega_{\Re},\,\omega_{\Im}\in\mathbb{S}^{n-1}$ and
$\omega_{\Re}\cdot\omega_{\Im}=\omega_{\Re}\cdot\xi=\omega_{\Im}\cdot\xi=0$.
Then we have the following identity
$$\int_{\R^{n}}\omega\cdot A(x)e^{iN_{\omega}^{-1}(-\omega\cdot A)(x)}e^{i\xi\cdot x}\,dx=\int_{\R^{n}}\omega\cdot A(x)e^{i\xi\cdot x}dx.$$
\end{Lemm}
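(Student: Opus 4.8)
The plan is to reduce the stated $n$-dimensional identity to a two-dimensional one, living in the plane spanned by $\omega_{\Re}$ and $\omega_{\Im}$, and then to settle the latter via the $\overline{\p}$-operator and the Cauchy--Pompeiu (Green) formula, keeping careful track of the contribution at infinity.

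First I would pick coordinates adapted to $\omega$. Since $\omega_{\Re},\omega_{\Im}\in\s^{n-1}$ are orthonormal, I complete them to an orthonormal basis $e_{1}=\omega_{\Re}$, $e_{2}=\omega_{\Im}$, $e_{3},\dots,e_{n}$ of $\R^{n}$ and write $x=(x',x'')$ with $x'=(x\cdot\omega_{\Re},x\cdot\omega_{\Im})\in\R^{2}$ and $x''\in\R^{n-2}$. In these coordinates $N_{\omega}=\omega\cdot\nabla=\p_{x_{1}}+i\p_{x_{2}}$ differentiates only the $x'$-variables, so by the integral formula for $N_{\omega}^{-1}$ the function $\eta:=N_{\omega}^{-1}(-\omega\cdot A)$ is, for each fixed $x''$, the two-dimensional convolution in $x'$ of $-(\omega\cdot A)(\cdot,x'')$ with the kernel $\tfrac{1}{2\pi(y_{1}+iy_{2})}$; moreover the orthogonality hypotheses $\omega_{\Re}\cdot\xi=\omega_{\Im}\cdot\xi=0$ give $e^{i\xi\cdot x}=e^{i\xi''\cdot x''}$. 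Since $\omega\cdot A$ is compactly supported and $\eta$ is bounded (Lemma \ref{Lm2.1}), Fubini's theorem reduces the claim to the slicewise statement: for every fixed $x''$,
$$\int_{\R^{2}}(\omega\cdot A)(x',x'')\,e^{i\eta(x',x'')}\,dx'=\int_{\R^{2}}(\omega\cdot A)(x',x'')\,dx'.$$

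Next I identify $\R^{2}\cong\C$ via $z=x_{1}+ix_{2}$, so that $N_{\omega}=2\overline{\p}$; fixing $x''$ and setting $g(z):=(\omega\cdot A)(z,x'')\in C_{c}(\C)$, the function $\eta$ solves $2\overline{\p}\eta=-g$. The computation rests on the identity $g\,e^{i\eta}=-2(\overline{\p}\eta)\,e^{i\eta}=2i\,\overline{\p}\bigl(e^{i\eta}-1\bigr)$. Integrating over $\C$ and observing that $\overline{\p}(e^{i\eta}-1)=-\tfrac{i}{2}g\,e^{i\eta}$ is supported in a fixed ball $\{|z|\le R_{0}\}$, the Cauchy--Pompeiu formula yields, for any $R>R_{0}$,
$$\int_{\C}g\,e^{i\eta}\,dm=2i\int_{|z|<R}\overline{\p}(e^{i\eta}-1)\,dm=\oint_{|z|=R}\bigl(e^{i\eta}-1\bigr)\,dz,$$
with $dm$ the Lebesgue measure on $\C$. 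Because $g$ has compact support, $\eta$ is holomorphic and $O(|z|^{-1})$ for $|z|>R_{0}$, with $\eta(z)=\tfrac{-1}{2\pi z}\int_{\C}g\,dm+O(|z|^{-2})$, hence $e^{i\eta}-1=\tfrac{-i}{2\pi z}\int_{\C}g\,dm+O(|z|^{-2})$; the residue theorem then gives $\oint_{|z|=R}(e^{i\eta}-1)\,dz=\int_{\C}g\,dm$. This is the slicewise identity, and multiplying by $e^{i\xi''\cdot x''}$ and integrating in $x''$ (Fubini again) returns the lemma.

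The point I expect to be delicate — and the reason the identity is not trivial — is exactly that $e^{i\eta}-1$ decays only like $|z|^{-1}$ in the $(\omega_{\Re},\omega_{\Im})$-plane, so a blunt integration by parts would silently discard a nonzero boundary term at infinity; identifying that term is the whole content of the lemma, and it equals precisely $\int_{\R^{n}}(\omega\cdot A)e^{i\xi\cdot x}\,dx$. The only remaining technical issue, the $C^{1}$ (or $W^{1,1}_{\mathrm{loc}}$) regularity of $\eta$ needed to apply the Cauchy--Pompeiu formula, is harmless: it holds outright for the smooth $A$ arising in our applications, and for a general $A\in C_{c}(\R^{n})$ one first proves the identity for mollified $A$ and passes to the limit using the continuity estimate of Lemma \ref{Lm2.1}.
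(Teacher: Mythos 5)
Your proof is correct: the reduction to the $(\omega_{\Re},\omega_{\Im})$-plane via Fubini, the identity $g\,e^{i\eta}=2i\,\overline{\p}(e^{i\eta}-1)$, the Cauchy--Pompeiu/Stokes step, and the evaluation of the contour integral at infinity from the $1/z$ asymptotics of the Cauchy transform all check out, constants included, and the mollification remark adequately covers the mere continuity of $A$. The paper itself gives no proof of Lemma \ref{Lm2.2} (it is quoted from \cite{[Sa]}), and your argument is essentially the standard one from that reference, so there is nothing to compare beyond noting that you have correctly supplied the omitted proof.
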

\subsection{Geometrical optics solutions}
In this section, we build special solutions to the magnetic Schr\"odinger
equation (\ref{Eq1}), inspired by techniques used in elliptic problems. For
this purpose, we consider a vector $\omega=\omega_{\Re}+i\,\omega_{\Im}$,
such that $\omega_{\Re},\,\omega_{\Im}\in \mathbb{S}^{n-1}$ and
$\omega_{\Re}\,.\,\omega_{\Im}=0$. For $\sigma>1$, we define the complex
variable $\rho$ as follows
\begin{equation}\label{Eq2}
\rho=\sigma\omega+y,
\end{equation}
where $y\in B(0,1)$ is fixed and  independent of $\sigma$. In what follows,
$P(D)$ denotes a differential operator with constant coefficients:
$$P(D)=\sum_{|\alpha|\leq m}a_{\alpha}\,D^{\alpha},\,\,\,\,\,\,\,\,\,\,D=-i(\p_{t},\p_{x}).$$
  We associate to  the operator $P(D)$ its symbol $p(\xi,\tau)$  defined by
$$p(\xi,\tau)=\sum_{|\alpha|\leq m}
a_{\alpha}(\xi,\tau)^{\alpha},\,\,\,\,\,\,\,\,\,\,(\xi,\tau)\in\R^{n+1}.$$
Moreover,  we set
 $$\widetilde {p}(\xi,\tau)=\para{\sum_{\beta\in\mathbb{N}} \sum_{\alpha\in\mathbb{N}^{n}}|\p_{\tau}^{\beta}\p_{\xi}^{\alpha}p(\xi,\tau)|^{2}}^{\frac{1}{2}},\,\,
\,\,\,(\xi,\tau)\in\R^{n+1},$$  and introduce the operators
$$\Delta_{\rho}=\Delta-2i\rho\cdot\nabla\,\,\,\,\,\, \mbox{and}\,\,\,\,\,\, \nabla_{\rho}=\nabla-i\rho.$$ 
We turn now to building particular solutions to the magnetic Shr\"odinger equation. We
proceed with a succession of lemmas. The first result is inspired by
H\"ormander \cite{[L.H]} (see Appendix B).
\begin{Lemm}\label{Lemme 2.3}
Let $P\neq0$ be an operator. There exists a linear operator $E\in\mathcal{L}(
L^{2}(0,T;H^{1}(\Omega))),$
 such that:

   $$P(D)E f=f, \,\,\,\,\,\,\mbox{for\,\,any}\,\, f\in L^{2}(0,T;H^{1}(\Omega)).$$
Moreover, for any linear operator $S$ with constant coefficients such that
      $\displaystyle\frac{|S(\xi,\tau)|}{\tilde{p}(\xi,\tau)}$ is bounded
      in $\R^{n+1}$, we have the following estimate
\begin{equation}\label{Eq2.5}
\|S(D)E f\|_{L^{2}(0,T;
      H^{1}(\Omega))}\leq C \,\displaystyle\sup_{\R^{n+1}}\frac{|S(\xi,\tau)|}{\tilde{p}(\xi,\tau)}\|f\|_{L^{2}(0,T;H^{1}(\Omega))}.
\end{equation}
Here $C$  depends only on the degree of $P$, $\Omega$ and $T$.
\end{Lemm}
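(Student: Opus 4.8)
The plan is to follow H\"ormander's construction of a fundamental solution together with its $L^{2}$ a priori bounds (see \cite{[L.H]}), transplanted to the bounded cylinder $Q=\Omega\times(0,T)$. Fix a ball $B\subset\R^{n+1}$ with $\overline{Q}\subset B$ and a bounded Sobolev extension operator $\mathcal{R}\colon L^{2}(0,T;H^{1}(\Omega))\to L^{2}(\R^{n+1})$ whose range consists of functions supported in $B$ and which is also bounded for the $H^{1}$ norm in the $x$ variables. The operator $E$ will be realized as $Ef=(\mathscr{E}\ast\mathcal{R}f)|_{Q}$, where $\mathscr{E}\in\mathcal{D}'(\R^{n+1})$ is the fundamental solution of $P(D)$ produced below; since convolution with $\mathscr{E}$ is translation invariant it commutes with every constant coefficient operator, in particular with each $\p_{x_{j}}$, and $P(D)Ef=(\mathcal{R}f)|_{Q}=f$ because differential operators are local. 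Hence, once convolution with $\mathscr{E}$ is seen to be bounded $L^{2}(B)\to L^{2}(B)$ --- this being the case $S=1$ of $(\ref{Eq2.5})$, which is legitimate because the highest order term of $P$ contributes a nonzero constant to one of the derivatives $\p_{\tau}^{\beta}\p_{\xi}^{\alpha}p$, so that $\tilde p\geq c>0$ and $\sup_{\R^{n+1}}1/\tilde p<\infty$ --- estimating $S(D)Ef$ in $L^{2}(0,T;H^{1}(\Omega))$ reduces, by differentiating in $x_{j}$ and commuting $\p_{x_{j}}$ through the convolution, to the $L^{2}$ form of $(\ref{Eq2.5})$ applied to $\mathcal{R}f$ and to $\p_{x_{j}}\mathcal{R}f$.

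The core of the matter is the a priori inequality
$$\norm{S(D)\phi}_{L^{2}(\R^{n+1})}\leq C\,\sup_{\R^{n+1}}\frac{\abs{S(\xi,\tau)}}{\tilde p(\xi,\tau)}\,\norm{P(D)\phi}_{L^{2}(\R^{n+1})},\qquad\phi\in C_{c}^{\infty}(B),$$
with $C$ depending only on $\deg P$ and on $B$, hence on $\Omega$ and $T$. I would prove it by Plancherel: $\widehat{\phi}$ extends to an entire function of exponential type by Paley--Wiener, so for each real $(\xi,\tau)$ one may write $\widehat{\phi}(\xi,\tau)$ by the mean value property along a complex curve $z\mapsto(\xi,\tau)+z\theta$, where $\theta=\theta(P)$ is a fixed real unit vector chosen so that $z\mapsto p((\xi,\tau)+z\theta)$ has degree exactly $\deg P$, the integration contour in $z$ being H\"ormander's contour --- possibly depending on $(\xi,\tau)$ --- which stays in a bounded region while avoiding the (at most $\deg P$) zeros of this one variable polynomial. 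Substituting $\widehat{\phi}=\widehat{P(D)\phi}/p$ under the integral, the two ingredients are: H\"ormander's uniform bound for the reciprocal of $p$ along such a contour by $C/\tilde p(\xi,\tau)$ --- this is exactly where the auxiliary quantity $\tilde p$ enters in an essential way --- and the elementary fact that a complex translation by a bounded vector $\eta$ costs at most a factor $e^{C\,\mathrm{diam}(B)}$ in $L^{2}(\R^{n+1})$, since $\widehat{\psi}(\xi+i\eta)=\widehat{e^{x\cdot\eta}\psi}(\xi)$ and $\psi=P(D)\phi$ is supported in $B$. Cauchy--Schwarz in the contour variable followed by integration in $(\xi,\tau)$ then gives the displayed estimate.

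It remains to deduce $E$ and $(\ref{Eq2.5})$ from the a priori inequality, which is the classical duality step. Applying the inequality (case $S=1$) to the transpose ${}^{t}P(D)$ --- again a nonzero constant coefficient operator, with the same supremum conditions since its symbol is $p(-\xi,-\tau)$ --- shows that the assignment ${}^{t}P(D)\phi\mapsto\langle f,\phi\rangle$ is a well-defined linear functional on the subspace $\{{}^{t}P(D)\phi:\phi\in C_{c}^{\infty}(B)\}$ of $L^{2}(B)$, of norm at most $C\norm{f}_{L^{2}(B)}$; so by Hahn--Banach and the Riesz representation there is a bounded right inverse $f\mapsto\mathscr{E}\ast f$ of $P(D)$ on $L^{2}(B)$, realized by convolution with a fixed fundamental solution $\mathscr{E}$, and running the same computation while keeping the factor $\sup_{\R^{n+1}}(\abs{S}/\tilde p)$ yields $\norm{S(D)(\mathscr{E}\ast f)}_{L^{2}(B)}\leq C\sup_{\R^{n+1}}(\abs{S}/\tilde p)\,\norm{f}_{L^{2}(B)}$. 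Restricting to $Q$ and using $\mathcal{R}$ and the commutation with $\p_{x_{j}}$ from the first paragraph then gives $E\in\mathcal{L}(L^{2}(0,T;H^{1}(\Omega)))$ with $P(D)Ef=f$ and $(\ref{Eq2.5})$. The principal obstacle is the a priori inequality, and inside it H\"ormander's uniform bound on the reciprocal of $p$ along a contour avoiding its zero set, where $\tilde p$ plays the essential role; the remaining steps --- Plancherel, Paley--Wiener, the Hahn--Banach argument, and the extension and commutation bookkeeping --- are routine, and are presumably what Appendix B carries out.
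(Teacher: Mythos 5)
Your overall strategy is sound, but it is worth being clear about how it sits relative to the paper. The paper does not reprove H\"ormander's division estimates at all: in Appendix B it quotes two facts as black boxes --- Lemma \ref{Blm1} (multiplication by a test function is bounded on $B_{\infty,\widetilde P}$) and Lemma \ref{Blm2} (every $P\neq 0$ has a fundamental solution $F$ with $F/\cosh|(x,t)|\in B_{\infty,\widetilde P}$, with a bound depending only on $n$ and $\deg P$) --- and then the proof is short bookkeeping: extend $f$ to $\widetilde f_0$ on $\R^{n+1}$, cut off with $\psi\in\mathcal C_0^\infty$ equal to $1$ near $\overline Q$, set $Ef=(F\ast\psi\widetilde f_0)|_Q$, observe $(F\ast\psi\widetilde f_0)|_Q=(\varphi F\ast\psi\widetilde f_0)|_Q$ for a second cutoff $\varphi$ equal to $1$ near $\{x-y:\ x,y\in Q\}$, commute $\p^\alpha$ ($|\alpha|\le k$) onto $\psi\widetilde f_0$, and bound the multiplier via Plancherel and the factorization $S\,\mathcal F(\varphi F)=\frac{S}{\widetilde P}\,\widetilde P\,\mathcal F\bigl(\varphi\cosh|(x,t)|\,\tfrac{F}{\cosh|(x,t)|}\bigr)$, which Lemmas \ref{Blm1}--\ref{Blm2} control by $\sup|S|/\widetilde P$. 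Your proposal instead rebuilds the underlying a priori inequality $\|S(D)\phi\|_{L^2}\le C\sup(|S|/\tilde p)\|P(D)\phi\|_{L^2}$ for $\phi\in C_c^\infty(B)$ from Paley--Wiener and the contour-shift argument; that inequality is correct and its sketched proof is exactly the classical H\"ormander mechanism, so in substance you are re-deriving what the paper cites. What each route buys: yours is self-contained at the level of the division lemma; the paper's is shorter and, importantly, makes transparent that $C$ depends only on $\deg P$, $\Omega$, $T$ and not on the coefficients of $P$ --- the uniformity in $\rho$ (hence in $\sigma$) that Lemma \ref{Lemme 2.4} needs. Your argument can deliver the same uniformity, but only if you track that the one-variable lemma's constant depends on $\deg P$ alone; say so explicitly.

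The one step that does not go through as written is the passage from the a priori inequality to the operator $E$ by Hahn--Banach/Riesz: duality gives, for each $f$, some solution of $P(D)u=f$ on $B$ with norm control, but it gives neither a canonical linear choice nor, more seriously, an operator that is translation invariant, so the claim that it is ``realized by convolution with a fixed fundamental solution $\mathscr E$'' is unjustified. This matters because your treatment of the $L^2(0,T;H^1(\Omega))$ norm rests entirely on commuting $\p_{x_j}$ through a convolution; an abstractly chosen right inverse need not commute with $\p_{x_j}$, and you cannot instead absorb $\xi_j$ into $S$, since $|S(\xi,\tau)\xi_j|/\tilde p(\xi,\tau)$ need not be bounded in the later applications. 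The fix is standard and is already implicit in your second paragraph: define $\mathscr E$ directly by the contour-averaged Fourier integral (Malgrange--Ehrenpreis/H\"ormander), which produces a genuine fundamental solution with the weighted bound $\mathscr E/\cosh|(x,t)|\in B_{\infty,\widetilde P}$ --- i.e.\ exactly the paper's Lemma \ref{Blm2} --- and then your first paragraph's extension-and-commutation scheme coincides with the paper's proof.
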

\begin{Lemm}\label{Lemme 2.4}
  There exists a bounded operator $E_{\rho}:L^{2}(0,T;H^{1}(\Omega))\longrightarrow
L^{2}(0,T;H^{2}(\Omega))$ such that
$$P_{\rho}(D) E_{\rho}f=(i\p_{t}+\Delta_{\rho})E_{\rho}f=f\quad\mbox{for\,any}\quad f\in L^{2}(0,T;H^{1}(\Omega)).$$
Moreover, there exists a constant $C(\Omega,T)>0$ such that
\begin{equation}\label{Eq2.6}
\|E_{\rho}f\|_{L^{2}(0,T;H^{k}(\Omega))}\leq \frac{C}{\sigma^{2-k}}\|f\|_{L^{2}(0,T;H^{1}(\Omega))},\,\,\,\,\,k=1,\,2.
\end{equation}
 \end{Lemm}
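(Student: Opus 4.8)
The plan is to deduce the existence of $E_\rho$ from the general H\"ormander-type construction in Lemma \ref{Lemme 2.3} applied to the differential operator $P_\rho(D) = i\p_t + \Delta_\rho$, and then to extract the decay in $\sigma$ from the quantitative estimate \eqref{Eq2.5}. First I would compute the symbol of $P_\rho(D)$. Writing $\rho = \sigma\omega + y$ with $\omega = \omega_\Re + i\omega_\Im$, $\omega_\Re\cdot\omega_\Im = 0$, $|\omega_\Re| = |\omega_\Im| = 1$, the operator $\Delta_\rho = \Delta - 2i\rho\cdot\nabla$ has symbol $-|\xi|^2 - 2\rho\cdot\xi$ (with the convention $D = -i\p$), so the full symbol of $P_\rho$ is
$$p_\rho(\xi,\tau) = \tau - |\xi|^2 - 2(\sigma\omega + y)\cdot\xi = \tau - |\xi|^2 - 2\sigma\omega_\Re\cdot\xi - 2i\sigma\omega_\Im\cdot\xi - 2y\cdot\xi.$$
Since $P_\rho \neq 0$, Lemma \ref{Lemme 2.3} furnishes a linear operator $E_\rho = E \in \mathcal L(L^2(0,T;H^1(\Omega)))$ with $P_\rho(D)E_\rho f = f$; the point is to show it maps into $L^2(0,T;H^2(\Omega))$ with the stated gain.

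The key step is the estimate \eqref{Eq2.6}, which I would obtain by applying \eqref{Eq2.5} to suitably chosen operators $S(D)$ and bounding $\sup_{\R^{n+1}} |S(\xi,\tau)|/\tilde p_\rho(\xi,\tau)$. Here $\tilde p_\rho(\xi,\tau) = \big(\sum_{\beta,\alpha} |\p_\tau^\beta \p_\xi^\alpha p_\rho(\xi,\tau)|^2\big)^{1/2}$. The crucial observation is that among the derivatives of $p_\rho$ we find the first-order term $\p_\xi p_\rho = -2\xi - 2\sigma\omega - 2y$, whose imaginary part is $-2\sigma\omega_\Im$; since $|\omega_\Im| = 1$ this contributes a term of size $2\sigma$ to $\tilde p_\rho$, so $\tilde p_\rho(\xi,\tau) \geq c\sigma$ uniformly in $(\xi,\tau)$ for $\sigma$ large. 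For the case $k=1$: to control $\|E_\rho f\|_{L^2(0,T;H^1(\Omega))}$ it suffices to bound, for each multi-index $|\alpha|\leq 1$, the operator $S(D) = \xi^\alpha$ (or $D^\alpha$); we need $\sup |\xi^\alpha|/\tilde p_\rho(\xi,\tau) \leq C/\sigma$. For $\alpha = 0$ this is immediate from $\tilde p_\rho \geq c\sigma$. For $|\alpha| = 1$, i.e.\ $S = \xi_j$, we use that $\tilde p_\rho$ also contains the term $|\p_\xi p_\rho|$ which is $\gtrsim |\xi|$ for $|\xi|$ large while for $|\xi|$ bounded we again fall back on the $\sigma$ bound; combining, $|\xi_j|/\tilde p_\rho \leq C(1+|\xi|)/(\sigma + |\xi|) \cdot (1/\sigma) \cdot \sigma \leq C/\sigma$ after a short case split on whether $|\xi| \lesssim \sigma$. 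For the case $k=2$: to control the $H^2$ norm we take $S = \xi^\alpha$ with $|\alpha| = 2$, and we must show $\sup |\xi|^2/\tilde p_\rho(\xi,\tau) \leq C$ (a bounded, not decaying, quantity). This follows because $\tilde p_\rho$ contains $|\p_\xi^2 p_\rho| = 2\sqrt n$ (a constant) and $|\partial_\xi p_\rho| \gtrsim |\xi|$ and $|p_\rho|$ itself, which together with the real part $\tau - |\xi|^2 - 2\sigma\omega_\Re\cdot\xi - 2y\cdot\xi$ being adjustable shows $\tilde p_\rho \gtrsim 1 + |\xi| + \text{(something comparable to }|\xi|^2\text{ via the full symbol)}$; more carefully one checks $\tilde p_\rho(\xi,\tau)^2 \gtrsim \sigma^2 + |\xi|^2$ by combining the $|\partial_\xi p_\rho|$ term (imaginary part $\sigma$, real part $\gtrsim |\xi|$ for large $|\xi|$) so that $|\xi|^2/\tilde p_\rho \leq |\xi|^2/\sqrt{\sigma^2 + |\xi|^2}$, which is \emph{not} bounded — so instead for $k=2$ I would use the equation itself: $\Delta E_\rho f = P_\rho(D)E_\rho f + 2i\rho\cdot\nabla E_\rho f - i\p_t E_\rho f = f + 2i\rho\cdot\nabla E_\rho f - i\p_t E_\rho f$, bound the right side in $L^2(0,T;L^2)$ using the $k=1$ estimate and $|\rho| \leq C\sigma$ together with the fact that $P_\rho E_\rho f = f$ also controls $\p_t E_\rho f$ via $i\p_t E_\rho f = f - \Delta_\rho E_\rho f$, and then invoke elliptic regularity for $\Delta$ on $\Omega$ to upgrade to $H^2$, the $\sigma^0$ scaling coming out of $\sigma \cdot \sigma^{-1} = 1$.

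I expect the main obstacle to be the $k = 2$ estimate: the naive choice $S(D) = D^\alpha$ with $|\alpha| = 2$ in \eqref{Eq2.5} does not give a bounded ratio $|S|/\tilde p_\rho$ because $\tilde p_\rho$ grows only like $(\sigma^2 + |\xi|^2)^{1/2}$ rather than like $|\xi|^2$, so second derivatives in the space variable are not directly controlled by the H\"ormander operator. The resolution is the bootstrap via the equation described above: one first establishes the $k=1$ bound directly from \eqref{Eq2.5} — where the ratio $|\xi^\alpha|/\tilde p_\rho$, $|\alpha| \leq 1$, genuinely is $O(1/\sigma)$ — and then recovers $H^2$-regularity by writing $\Delta E_\rho f$ in terms of $f$, $\nabla E_\rho f$ and $\p_t E_\rho f$, all of which are already controlled, and applying interior/boundary elliptic estimates for the Laplacian on the smooth bounded domain $\Omega$. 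The remaining routine points — verifying $P_\rho \neq 0$ so Lemma \ref{Lemme 2.3} applies, checking that the constants depend only on $\Omega$ and $T$ (the degree of $P_\rho$ is fixed at $2$), and the case split $|\xi| \lessgtr \sigma$ in the $k=1$ computation — I would carry out directly without difficulty.
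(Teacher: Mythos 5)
Your $k=1$ conclusion is right, but your route to it, and your entire $k=2$ argument, miss the one structural fact that makes the paper's proof a two-line affair: the estimate \eqref{Eq2.5} measures $S(D)Ef$ in the norm of $L^{2}(0,T;H^{1}(\Omega))$, not of $L^{2}(Q)$. Because the target norm already carries one spatial derivative, you never need $S$ of order two. For $k=1$ take $S=\mathrm{Id}$ and use $\tilde p_{\rho}\geq 2\sigma$ (coming from the imaginary part $-2\sigma\omega_{\Im}$ of $\partial_{\xi}p_{\rho}$, as you observed); for $k=2$ take $S(D)=\partial_{j}$, note that $|\xi_{j}|/\tilde p_{\rho}\leq C$ uniformly in $(\xi,\tau)$ (for $|\xi|\lesssim\sigma$ use $\tilde p_{\rho}\geq 2\sigma$, for $|\xi|\gtrsim\sigma$ use $|\partial_{\xi}p_{\rho}|\gtrsim|\xi|$), and conclude $\|\partial_{j}E_{\rho}f\|_{L^{2}(0,T;H^{1}(\Omega))}\leq C\|f\|_{L^{2}(0,T;H^{1}(\Omega))}$, which together with the $k=1$ bound is exactly the $H^{2}$ estimate. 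This is the paper's proof.

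By contrast, two steps of your plan do not survive scrutiny. First, the intermediate claim $\sup|\xi_{j}|/\tilde p_{\rho}\leq C/\sigma$ is false: on the characteristic set, for $|\xi|\gg\sigma$, one has $\tilde p_{\rho}\asymp|\xi|$ and the ratio is of order $1$, not $1/\sigma$. (This does not sink your $k=1$ bound, since $S=\mathrm{Id}$ alone suffices once the target norm of \eqref{Eq2.5} is read correctly, but the computation as written is wrong.) Second, and more seriously, your $k=2$ bootstrap is circular: from $P_{\rho}(D)E_{\rho}f=f$ you can only control the combination $i\partial_{t}E_{\rho}f+\Delta E_{\rho}f=f+2i\rho\cdot\nabla E_{\rho}f$, and your proposed control of $\partial_{t}E_{\rho}f$ via $i\partial_{t}E_{\rho}f=f-\Delta_{\rho}E_{\rho}f$ presupposes the very bound on $\Delta E_{\rho}f$ you are trying to prove. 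Nor can you obtain $\partial_{t}E_{\rho}f$ from \eqref{Eq2.5} with $S=\tau$: choosing $\xi\perp\omega_{\Im}$ and $\tau=|\xi|^{2}+2(\sigma\omega_{\Re}+y)\cdot\xi$ makes $p_{\rho}$ vanish while $|\tau|/\tilde p_{\rho}\asymp|\xi|\to\infty$. So the elliptic-regularity detour does not close --- and it is also unnecessary.
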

\begin{proof}{}
From Lemma \ref{Lemme 2.3}, we deduce the  existence of  a linear operator
$E_{\rho}\in \mathcal{L}\Big(L^{2}(0,T;H^{1}(\Omega))\Big)$ such that
$P_{\rho}(D) E_{\rho}f=f$.
 Moreover, since $|\widetilde{p _{\rho}}(\xi,\tau)|>\sigma$,
we get from (\ref{Eq2.5})
 \begin{equation}\label{aj1}
\|E_{\rho}f\|_{L^{2}(0,T;H^{1}(\Omega))}\leq \frac{C}{
\sigma} \|f\|_{L^{2}(0,T;H^{1}(\Omega))}.
\end{equation}
 Similarly, since $\displaystyle\frac{|\xi|}{\widetilde{p_{\rho}}(\xi,\tau)}$ is bounded on
$\R^{n+1}$, we get
$$\|\nabla E_{\rho}f\|_{L^{2}(0,T;H^{1}(\Omega))}\leq C
\|f\|_{L^{2}(0,T;H^{1}(\Omega))}.$$ From this and (\ref{aj1}) we see that $E_{\rho}$ is bounded from $L^{2}(0,T;H^{1}(\Omega))$ into $L^{2}(0,T;H^{2}(\Omega))$.
\end{proof}
Let us now deduce the coming statementfrom the above lemma.
 \begin{Lemm}\label{Lemme 2.5}
There exists $\varepsilon>0$ such that for all $A\in W^{1,\infty}(\Omega)$ obeying 
$\|A\|_{W^{1,\infty}(\Omega)}\leq \varepsilon,$  we may build a bounded operator
$F_{\rho}:L^{2}(0,T;H^{1}(\Omega))\longrightarrow L^{2}(0,T;H^{2}(\Omega))$
such that:
\begin{equation}\label{aj3}
\big(i\p_{t}+\Delta_{\rho}+2iA\cdot\nabla\big)
F_{\rho}f=f,\quad\mbox{for\,any}\quad f\in L^{2}(0,T;H^{1}(\Omega)).
\end{equation}
Moreover, there exists a constant $C(\Omega, T)>0$ such  that
\begin{equation}\label{eq2.9}
\|F_{\rho}f\|_{L^{2}(0,T;H^{k}(\Omega))}\leq \frac{C}{\sigma^{2-k}}\|f\|_{L^{2}(0,T;H^{1}(\Omega))},\,\,\,\,\,k=1,\,2.
\end{equation}
\end{Lemm}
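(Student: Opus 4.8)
The plan is to obtain $F_\rho$ from $E_\rho$ by a Neumann-series (perturbation) argument, treating the first-order term $2iA\cdot\nabla$ as a small perturbation of the operator $P_\rho(D)=i\p_t+\Delta_\rho$. Writing $i\p_t+\Delta_\rho+2iA\cdot\nabla = P_\rho(D)\bigl(I + 2i\,(A\cdot\nabla)\,E_\rho\bigr)$ after applying $E_\rho$ on the right, the idea is to show that the operator $T_\rho := 2i\,(A\cdot\nabla)\,E_\rho$ has small norm on $L^2(0,T;H^1(\Omega))$ when $\sigma$ is large and $\|A\|_{W^{1,\infty}}\le\varepsilon$, so that $I+T_\rho$ is invertible by a Neumann series, and then set $F_\rho := E_\rho\,(I+T_\rho)^{-1}$.

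First I would estimate $T_\rho$. For $f\in L^2(0,T;H^1(\Omega))$, the term $A\cdot\nabla E_\rho f$ lies in $L^2(0,T;H^1(\Omega))$ because $A\in W^{1,\infty}(\Omega)$ multiplies $H^1$ into $H^1$ with norm $\lesssim\|A\|_{W^{1,\infty}}$, and $\nabla E_\rho f\in L^2(0,T;H^1(\Omega))$ by the bound on $\|\nabla E_\rho f\|$ established in the proof of Lemma \ref{Lemme 2.4}. That gives $\|T_\rho f\|_{L^2(0,T;H^1(\Omega))}\le C\|A\|_{W^{1,\infty}(\Omega)}\|f\|_{L^2(0,T;H^1(\Omega))}\le C\varepsilon\|f\|_{L^2(0,T;H^1(\Omega))}$, with $C=C(\Omega,T)$ independent of $\sigma$. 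Choosing $\varepsilon>0$ small enough that $C\varepsilon<1/2$ (this is exactly the $\varepsilon$ in the statement), the operator $I+T_\rho$ is boundedly invertible on $L^2(0,T;H^1(\Omega))$ with $\|(I+T_\rho)^{-1}\|\le 2$.

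Then I would define $F_\rho = E_\rho(I+T_\rho)^{-1}$ and verify \eqref{aj3}. Since $P_\rho(D)E_\rho = I$ on $L^2(0,T;H^1(\Omega))$ by Lemma \ref{Lemme 2.4}, for any $g\in L^2(0,T;H^1(\Omega))$ we have $(i\p_t+\Delta_\rho+2iA\cdot\nabla)E_\rho g = P_\rho(D)E_\rho g + 2i(A\cdot\nabla)E_\rho g = g + T_\rho g = (I+T_\rho)g$; applying this with $g=(I+T_\rho)^{-1}f$ yields $(i\p_t+\Delta_\rho+2iA\cdot\nabla)F_\rho f = f$, which is \eqref{aj3}. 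For the norm bound \eqref{eq2.9}, I would compose the estimate \eqref{Eq2.6} for $E_\rho$ with the uniform bound $\|(I+T_\rho)^{-1}\|\le 2$ on $L^2(0,T;H^1(\Omega))$: for $k=1,2$,
$$\|F_\rho f\|_{L^2(0,T;H^k(\Omega))}=\|E_\rho(I+T_\rho)^{-1}f\|_{L^2(0,T;H^k(\Omega))}\le\frac{C}{\sigma^{2-k}}\|(I+T_\rho)^{-1}f\|_{L^2(0,T;H^1(\Omega))}\le\frac{2C}{\sigma^{2-k}}\|f\|_{L^2(0,T;H^1(\Omega))},$$
which gives \eqref{eq2.9} after renaming the constant. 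In particular $F_\rho$ maps into $L^2(0,T;H^2(\Omega))$ and is bounded as claimed.

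The only delicate point is making sure the perturbation estimate for $T_\rho$ is genuinely uniform in $\sigma$, i.e.\ that no hidden $\sigma$-growth enters through $\nabla E_\rho$; this is guaranteed because $|\xi|/\widetilde{p_\rho}(\xi,\tau)$ is bounded on $\R^{n+1}$ uniformly in $\sigma\ge1$, as used in Lemma \ref{Lemme 2.4}. (One should note that $\widetilde{p_\rho}(\xi,\tau)\ge c|\xi|$ for $|\xi|$ large and $\widetilde{p_\rho}\ge\sigma$ everywhere, from which the boundedness of the multiplier follows with a $\sigma$-independent constant.) Apart from this bookkeeping, the argument is a routine contraction/Neumann-series perturbation of Lemma \ref{Lemme 2.4}, so I do not expect a serious obstacle; the main thing to get right is the smallness threshold for $\varepsilon$ and the uniformity of all constants in $\sigma$.
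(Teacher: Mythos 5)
Your proposal is correct and is essentially the paper's argument: the paper constructs $F_\rho$ as the fixed point of the affine contraction $g\mapsto E_\rho(-2iA\cdot\nabla g+f)$ on $L^2(0,T;H^2(\Omega))$, which is the same perturbative inversion as your Neumann series $F_\rho=E_\rho(I+T_\rho)^{-1}$, with the smallness coming from $\|A\|_{W^{1,\infty}}\le\varepsilon$ and the $\sigma$-uniform bound of Lemma \ref{Lemme 2.4} in exactly the way you describe. The only (cosmetic) difference is that the paper runs the iteration in the $H^2$-valued space and then deduces the $k=1$ bound separately, whereas you invert on the $H^1$-valued space and read off both bounds from \eqref{Eq2.6}.
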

 \begin{proof}{}
 Let $f\in L^{2}(0,T; H^{1}(\Omega)).$
We start by introducing the following operator
 $$\begin{array}{rrr}
 S_{\rho}: L^{2}(0,T; H^{2}(\Omega))&\longrightarrow &L^{2}(0,T; H^{2}(\Omega))\\
 g&\longmapsto& E_{\rho}(-2i A\cdot\nabla g+f).
 \end{array}$$
 Since $\|A\|_{W^{1,\infty}(\Omega)}\leq
\varepsilon,$ we deduce from (\ref{Eq2.6}) with $k=2$ that
 \begin{eqnarray}\label{Equation 2.9}
 \|S_{\rho}(h)-S_{\rho}(g)\|_{L^{2}(0,T; H^{2}(\Omega))}&\leq& C\varepsilon \|h-g\|_{L^{2}(0,T; H^{2}(\Omega))},
 \end{eqnarray}
 for any $h,\,g\in L^{2}(0,T;H^{2}(\Omega))$. Thus, $S_{\rho}$ is a contraction from $L^{2}(0,T;H^{2}(\Omega))$ into $L^{2}(0,T;H^{2}(\Omega))$ for $\varepsilon$ small enough. Then, $S_{\rho}$ admits  a unique fixed
point $g\in L^{2}(0,T; H^{2}(\Omega))$. Put $F_{\rho}f=g$. It is clear that
$F_{\rho}f$ is a solution to (\ref{aj3}). Then, taking into account the
identity $S_{\rho}F_{\rho} f=E_{\rho}(-2iA\cdot \nabla F_{\rho}f+f)$ and the
estimate (\ref{Equation 2.9}), we get
 $$\begin{array}{lll}
 \|F_{\rho}f\|_{L^{2}(0,T;H^{2}(\Omega))}&=&\|S_{\rho}F_{\rho}f-S_{\rho}(0)\|_{L^{2}(0,T; H^{2}(\Omega))}+\|S_{\rho}(0)\|_{L^{2}(0,T; H^{2}(\Omega))} \\
&\leq& C \varepsilon
\|F_{\rho}f\|_{L^{2}(0,T;H^{2}(\Omega))}+\|E_{\rho}f\|_{L^{2}(0,T;H^{2}(\Omega))}.
\end{array}$$
From this and (\ref{Eq2.6}) with $k=2$, we end up getting for $\varepsilon$ small enough 
\begin{equation}\label{l'aequation 2.11}
\|F_{\rho}f\|_{L^{2}(0,T;H^{2}(\Omega))}\leq
{C}\|f\|_{L^{2}(0,T;H^{1}(\Omega))}.
\end{equation}
This being said, it remains to show (\ref{eq2.9}) for $k=1$. To see this, we
notice from (\ref{Eq2.6}) with $k=1$ that
$$\begin{array}{lll}
\|F_{\rho}f\|_{L^{2}(0,T;H^{1}(\Omega))}&\leq& \|E_{\rho}(-2iA\cdot \nabla F_{\rho}f+f)\|_{L^{2}(0,T;H^{1}(\Omega))}\\
&\leq& \displaystyle\frac{C}{\sigma}\para{\varepsilon\|F_{\rho}f\|_{L^{2}(0,T;H^{2}(\Omega))}+\|f\|_{L^{2}(0,T;H^{1}(\Omega))}}.
\end{array}$$

Then the estimate (\ref{eq2.9}) for $k=1$ follows readily from  this and
(\ref{l'aequation 2.11}).

 \end{proof}
\begin{Lemm}\label{Lemme 2.6}
There exists $\varepsilon>0$ such that for all $A\in W^{1,\infty}(\Omega)$ obeying 
$\|A\|_{W^{1,\infty}(\Omega)}\leq \varepsilon,$  we may build a bounded operator
$G_{\rho}:L^{2}(0,T;H^{1}(\Omega))\longrightarrow L^{2}(0,T;H^{2}(\Omega))$
such that:
\begin{equation}\label{l'equation 2.12}
\big(i\p_{t}+\Delta_{\rho}+2i A\cdot\nabla_{\rho}\big)G_{\rho}f=f\quad \mbox{for\,any}\quad f\in L^{2}(0,T;H^{1}(\Omega)).
\end{equation}
Moreover, there exists a constant $C(\Omega,T)>0 $ such that
\begin{equation}\label{l'equation 2.13}
\|G_{\rho}f\|_{L^{2}(0,T;H^{k}(\Omega))}\leq \frac{C}{\sigma^{2-k}}\|f\|_{L^{2}(0,T;H^{1}(\Omega))},
\,\,\,\,\,\,k=1,\,2.
\end{equation}
\end{Lemm}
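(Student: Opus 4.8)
The plan is to deduce Lemma~\ref{Lemme 2.6} from Lemma~\ref{Lemme 2.5} by treating the term produced by $\nabla_\rho=\nabla-i\rho$ as a lower order perturbation. Since $2iA\cdot\nabla_\rho=2iA\cdot\nabla+2(A\cdot\rho)$, the operator to be inverted is
$$i\p_t+\Delta_\rho+2iA\cdot\nabla_\rho=\big(i\p_t+\Delta_\rho+2iA\cdot\nabla\big)+2(A\cdot\rho),$$
that is, the operator already inverted by $F_\rho$ in Lemma~\ref{Lemme 2.5}, perturbed by multiplication by the ($t$-independent) function $2(A\cdot\rho)\in W^{1,\infty}(\Omega)$. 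Because $\rho=\sigma\omega+y$ with $|\omega|=\sqrt2$ and $|y|\le1$, one has $|A\cdot\rho|\le C\sigma\|A\|_{L^\infty(\Omega)}$ and $|\nabla(A\cdot\rho)|\le C\sigma\|A\|_{W^{1,\infty}(\Omega)}$, so the Leibniz rule gives the multiplier bound
$$\|(A\cdot\rho)g\|_{L^2(0,T;H^1(\Omega))}\le C\sigma\|A\|_{W^{1,\infty}(\Omega)}\,\|g\|_{L^2(0,T;H^1(\Omega))}.$$
The factor $\sigma$ appearing here is precisely the one absorbed by the gain $\sigma^{-1}$ in~(\ref{eq2.9}) with $k=1$, which is why the construction must be carried out at the $H^1$ level.

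Concretely, given $f\in L^2(0,T;H^1(\Omega))$ I introduce the affine map
$$\widetilde S_\rho:L^2(0,T;H^1(\Omega))\To L^2(0,T;H^1(\Omega)),\qquad g\longmapsto F_\rho\big(f-2(A\cdot\rho)g\big),$$
which is well defined since $(A\cdot\rho)g\in L^2(0,T;H^1(\Omega))$ and $F_\rho$ maps $L^2(0,T;H^1(\Omega))$ into $L^2(0,T;H^2(\Omega))\subset L^2(0,T;H^1(\Omega))$. Using~(\ref{eq2.9}) with $k=1$ and the multiplier bound,
$$\|\widetilde S_\rho(g_1)-\widetilde S_\rho(g_2)\|_{L^2(0,T;H^1(\Omega))}\le\frac{C}{\sigma}\,C\sigma\varepsilon\,\|g_1-g_2\|_{L^2(0,T;H^1(\Omega))}\le C'\varepsilon\,\|g_1-g_2\|_{L^2(0,T;H^1(\Omega))},$$
so $\widetilde S_\rho$ is a contraction for $\varepsilon$ small enough, uniformly in $\sigma$. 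Denote by $G_\rho f$ its unique fixed point; then $G_\rho f=F_\rho\big(f-2(A\cdot\rho)G_\rho f\big)$, and applying $i\p_t+\Delta_\rho+2iA\cdot\nabla$ and using~(\ref{aj3}) yields identity~(\ref{l'equation 2.12}). Estimating the fixed point as in Lemma~\ref{Lemme 2.5}, from $\|G_\rho f\|_{L^2(0,T;H^1(\Omega))}\le\|\widetilde S_\rho(G_\rho f)-\widetilde S_\rho(0)\|+\|\widetilde S_\rho(0)\|$ and~(\ref{eq2.9}) with $k=1$ one gets~(\ref{l'equation 2.13}) for $k=1$. Feeding this back into $G_\rho f=F_\rho\big(f-2(A\cdot\rho)G_\rho f\big)$ and applying~(\ref{eq2.9}) with $k=2$ together with the multiplier bound,
$$\|G_\rho f\|_{L^2(0,T;H^2(\Omega))}\le C\big(\|f\|_{L^2(0,T;H^1(\Omega))}+C\sigma\varepsilon\|G_\rho f\|_{L^2(0,T;H^1(\Omega))}\big)\le C''\|f\|_{L^2(0,T;H^1(\Omega))},$$
which is~(\ref{l'equation 2.13}) for $k=2$.

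The one genuinely delicate point is the bookkeeping of powers of $\sigma$: contrary to the first order perturbation $2iA\cdot\nabla$ handled in Lemma~\ref{Lemme 2.5}, the multiplier $2(A\cdot\rho)$ has operator norm growing linearly in $\sigma$, so performing the fixed point argument at the $H^2$ level (as was done there) would yield a contraction constant of order $\sigma\varepsilon$, which is useless for large $\sigma$; it must instead be done at the $H^1$ level, where $F_\rho$ supplies the compensating factor $\sigma^{-1}$, and the $H^2$ estimate is recovered a posteriori from the fixed point equation. Everything else — the Leibniz-rule multiplier estimate and the smallness choice of $\varepsilon$ — is routine and demands no regularity on $A$ beyond $W^{1,\infty}(\Omega)$.
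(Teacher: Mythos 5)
Your proof is correct and follows essentially the same route as the paper: you rewrite $2iA\cdot\nabla_\rho=2iA\cdot\nabla+2(A\cdot\rho)$, set up the fixed-point map $g\mapsto F_\rho\big(f-2(\rho\cdot A)g\big)$ on $L^2(0,T;H^1(\Omega))$ exactly as the paper's operator $R_\rho$, use $|\rho|\leq C\sigma$ together with the $k=1$ bound~(\ref{eq2.9}) to get a contraction constant $O(\varepsilon)$ uniform in $\sigma$, and then recover the $k=2$ estimate a posteriori from the fixed-point identity. Your remark on why the iteration must be run at the $H^1$ level (where the $\sigma^{-1}$ gain of $F_\rho$ offsets the $O(\sigma)$ multiplier) just makes explicit what the paper leaves implicit in its phrase ``arguing as in the proof of Lemma~\ref{Lemme 2.5}''.
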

\begin{proof}{}
Let $f\in L^{2}(0,T;H^{1}(\Omega))$. We introduce the following operator
$$\begin{array}{rrr}
R_{\rho}:L^{2}(0,T;H^{1}(\Omega))&\longrightarrow& L^{2}(0,T;H^{1}(\Omega))\\
g&\longmapsto& F_{\rho}(-2\rho\cdot A g+f)
\end{array}$$
From (\ref{Eq2}), we see that $|\rho|< 3\sigma$. Thus, arguing as in the
proof of Lemma \ref{Lemme 2.5}, we prove the existence of a unique solution
$G_{\rho}f=g$ to the equation (\ref{l'equation 2.12}). Moreover there exists
a positive constants $C>0$ such that we have
\begin{equation}\label{l'equation 2.14}
 \|u\|_{L^{2}(0,T,H^{1}(\Omega))}\leq \frac{C}{\sigma}\|f\|_{L^{2}(0,T;H^{1}(\Omega))}.
\end{equation}
Further, combining the definition of $R_{\rho}$ with (\ref{aj3}) we deduce
(\ref{l'equation 2.13}) for $k=2$.
\end{proof}
Armed with lemma \ref{Lemme 2.6}, we are now in position to establish the
main result of this section, which can
be stated as follows
\begin{Lemm}\label{Prop3.1} Let $M>0$, $\varepsilon>0$, $\omega\in\mathbb{S}^{n-1}$ and   $A\in \mathcal{A}_{\varepsilon}$ satisfy $\|A\|_{W^{1,\infty}(\Omega)}\leq
\varepsilon$. Put $\phi=N_{\omega}^{-1}(-\omega.A)$. Then, for all
$\sigma\geq \sigma_{0}>0$ the magnetic Schr\"odinger equation
\begin{equation}\label{Eq6}
(i\p_{t}+\Delta_{A}+q(x,t))u(x,t)=0,\,\,\,\,\,\mbox{in}\,\,Q
\end{equation}
admits a solution $u\in H^{2}(0,T;H^{1}(\Omega))\cap
L^{2}(0,T;H^{2}(\Omega)),$ of the form
\begin{equation}\label{Eq7}
u(x,t)=e^{-i\big((\rho\cdot\rho)t+x\cdot\rho\big)}\big(e^{i\phi(x)}+w(x,t)\big),
\end{equation}
 in such a way that
\begin{equation}\label{eq}
\omega\cdot\nabla\phi(x)=-\omega\cdot A(x),\,\,\,\,\,x\in\R^{n}.
\end{equation}
Moreover, $w\in H^{2}(0,T;H^{1}(\Omega))\cap L^{2}(0,T;H^{2}(\Omega))$
satisfies
\begin{equation}\label{Equation 2.17}
\sigma\|w\|_{H^{2}(0,T;H^{1}(\Omega))}+\|w\|_{L^{2}(0,T;H^{2}(\Omega))}\leq C,
\end{equation}
where the constants $C$ and $\sigma_{0}$  depend only on $\Omega, T$ and
$M.$
\end{Lemm}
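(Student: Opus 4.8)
The plan is to substitute the ansatz \eqref{Eq7} into the equation \eqref{Eq6} and derive an equation for the corrector $w$ that can be solved using the operator $G_\rho$ from Lemma \ref{Lemme 2.6}. First I would record that conjugating $i\p_t + \Delta_A + q$ by the exponential $e^{-i((\rho\cdot\rho)t + x\cdot\rho)}$ turns $i\p_t + \Delta$ into $i\p_t + \Delta_\rho$ (up to the constant term that cancels the phase, using $\rho\cdot\rho = \sigma^2\omega\cdot\omega + 2\sigma\,\omega\cdot y + y\cdot y = 2\sigma\,\omega\cdot y + |y|^2$ since $\omega\cdot\omega = 0$), and turns $\Delta_A = \Delta + 2iA\cdot\nabla + i\,\dive(A) - |A|^2$ into $\Delta_\rho + 2iA\cdot\nabla_\rho + i\,\dive(A) - |A|^2$. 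Hence $u$ of the form \eqref{Eq7} solves \eqref{Eq6} if and only if
$$\big(i\p_t + \Delta_\rho + 2iA\cdot\nabla_\rho\big)\big(e^{i\phi} + w\big) + \big(i\,\dive(A) - |A|^2 + q\big)\big(e^{i\phi} + w\big) = 0.$$

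The key algebraic step is that the choice \eqref{eq}, i.e. $\phi = N_\omega^{-1}(-\omega\cdot A)$ via Lemma \ref{Lm2.1} (which also gives $\phi \in W^{2,\infty}(\R^n)$ with the norm bound in terms of $\varepsilon$ and the diameter of $\Omega$), is designed precisely so that $e^{i\phi}$ is an approximate solution of the leading part. Indeed $\nabla_\rho(e^{i\phi}) = (i\nabla\phi - i\rho)e^{i\phi}$, and applying $i\p_t + \Delta_\rho = i\p_t + \Delta - 2i\rho\cdot\nabla$ to $e^{i\phi}$ (which is $t$-independent) produces $(i\Delta\phi - |\nabla\phi|^2 - 2\rho\cdot\nabla\phi)e^{i\phi}$; combined with $2iA\cdot\nabla_\rho(e^{i\phi}) = (2iA\cdot\nabla\phi + 2\rho\cdot A)e^{i\phi}$ — wait, more carefully $2iA\cdot(i\nabla\phi - i\rho) = -2A\cdot\nabla\phi + 2A\cdot\rho$ — and writing $\rho = \sigma\omega + y$, the two terms $-2\sigma\,\omega\cdot\nabla\phi$ and $+2\sigma\,\omega\cdot A$ cancel by \eqref{eq}, leaving only zeroth-order-in-$\sigma$ contributions $(i\Delta\phi - |\nabla\phi|^2 - 2y\cdot\nabla\phi - 2A\cdot\nabla\phi + 2y\cdot A)e^{i\phi}$. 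Therefore $w$ must satisfy
$$\big(i\p_t + \Delta_\rho + 2iA\cdot\nabla_\rho\big)w = -g_0,$$
where $g_0 := \big(i\Delta\phi - |\nabla\phi|^2 - 2y\cdot\nabla\phi - 2A\cdot\nabla\phi + 2y\cdot A + i\,\dive(A) - |A|^2 + q\big)e^{i\phi} + \big(i\,\dive(A) - |A|^2 + q\big)w$.

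Next I would set this up as a fixed-point problem: with $G_\rho$ the operator from Lemma \ref{Lemme 2.6} solving $(i\p_t + \Delta_\rho + 2iA\cdot\nabla_\rho)G_\rho f = f$ and satisfying \eqref{l'equation 2.13}, define the map $w \mapsto -G_\rho\big(g_1 + (i\,\dive(A) - |A|^2 + q)w\big)$ where $g_1 := (i\Delta\phi - |\nabla\phi|^2 - 2y\cdot\nabla\phi - 2A\cdot\nabla\phi + 2y\cdot A + i\,\dive(A) - |A|^2 + q)e^{i\phi}$ is the $w$-independent part. The coefficient $i\,\dive(A) - |A|^2 + q$ lies in $W^{1,\infty}(Q) \subset W^{1,\infty}(0,T;W^{1,\infty}(\Omega))$ with norm bounded by a constant depending on $\varepsilon$ and $M$ (using $A\in\mathcal A_\varepsilon$, $q\in\mathcal Q_M$), so multiplication by it is bounded on $L^2(0,T;H^1(\Omega))$, and composing with $G_\rho$ picks up the gain $C/\sigma$ from \eqref{l'equation 2.13} with $k=1$; hence for $\sigma \geq \sigma_0$ large the map is a contraction on $L^2(0,T;H^1(\Omega))$ and has a unique fixed point $w$. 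The bound $\sigma\|w\|_{L^2(0,T;H^1)} \leq C$ follows from \eqref{l'equation 2.13} with $k=1$ applied to $g_1$ (whose $L^2(0,T;H^1(\Omega))$-norm is bounded by $C(\Omega,T,M)$ — here is where $\phi\in W^{2,\infty}$, i.e. $A\in\mathcal C^3$ giving $\dive A, \nabla\phi$ in $W^{1,\infty}$, is used), and $\|w\|_{L^2(0,T;H^2)} \leq C$ follows from \eqref{l'equation 2.13} with $k=2$.

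The remaining point — obtaining the $H^2(0,T;H^1(\Omega))$ regularity and the factor $\sigma$ in front of $\|w\|_{H^2(0,T;H^1)}$ in \eqref{Equation 2.17} — is the part I expect to require the most care, since Lemmas \ref{Lemme 2.3}--\ref{Lemme 2.6} as stated only control spatial derivatives. The idea is to differentiate the fixed-point equation in $t$: formally $\p_t w$ and $\p_t^2 w$ satisfy equations of the same type with right-hand sides involving $\p_t q$, $\p_t^2 q$ (which are controlled since $q\in W^{2,\infty}(0,T;W^{1,\infty}(\Omega))$, i.e. $q\in\mathcal Q_M$) acting on $w$, $\p_t w$ and on $e^{i\phi}$ (note $\phi$ is $t$-independent, so $\p_t e^{i\phi} = 0$). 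Applying $G_\rho$ to these and using the $C/\sigma$ gain again yields $\sigma\|\p_t w\|_{L^2(0,T;H^1)} + \sigma\|\p_t^2 w\|_{L^2(0,T;H^1)} \leq C$, whence \eqref{Equation 2.17}. One technical subtlety is justifying the time-differentiation rigorously (e.g. by a difference-quotient argument or by first proving the estimate for smooth data and passing to the limit), and keeping track of the fact that the constant $\sigma_0$ must be chosen uniformly over all $A\in\mathcal A_\varepsilon$ and $q\in\mathcal Q_M$, which is fine because all the operator norms above depend on $A,q$ only through $\varepsilon$ and $M$.
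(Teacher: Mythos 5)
Your proposal follows essentially the same route as the paper: substitute the ansatz, use the transport equation $\omega\cdot\nabla\phi=-\omega\cdot A$ to cancel the $O(\sigma)$ terms, solve for $w$ by a fixed point built on $G_\rho$ and the $C/\sigma$ gain of \eqref{l'equation 2.13} with $k=1$, obtain the $L^{2}(0,T;H^{2}(\Omega))$ bound from $k=2$, and differentiate the equation twice in $t$ for the $H^{2}(0,T;H^{1}(\Omega))$ bound. The only blemish is a sign slip in the intermediate computation: $-2i\rho\cdot\nabla e^{i\phi}=+2\rho\cdot\nabla\phi\,e^{i\phi}$, so the first-order terms are $+2\sigma\,\omega\cdot\nabla\phi+2\sigma\,\omega\cdot A$ (as in the paper), and it is with these signs that \eqref{eq} produces the cancellation you invoke.
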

Here we extended $A$ by zero outside $\Omega$.
\begin{proof}{} To prove our lemma, it is enough to show that $w\in H^{2}(0,T;H^{1}(\Omega))\cap L^{2}(0,T;H^{2}(\Omega)) $
satisfies the estimate (\ref{Equation 2.17}). Substituting (\ref{Eq7}) into
the equation (\ref{Eq6}), one gets
$$\begin{array}{lll}
\displaystyle\Big(i\p_{t}+\Delta_{\rho}+2iA(x)\cdot\nabla_{\rho}+h(x,t)\Big)w(x,t)
\!\!\!&=&\!\!\!-e^{i\phi(x)}\Big(i\Delta\phi(x)-|\nabla\phi(x)|^{2}+2\sigma\omega\cdot \nabla\phi(x)+2\sigma\omega\cdot A(x)\\
&&\,\,\,\,\,+2y\cdot \nabla\phi(x)+2A(x)\cdot y-2A(x)\cdot \nabla\phi(x)+h(x,t)\Big),
\end{array}$$
where $h(x,t)=i\mbox{div}A(x)-|A(x)|^{2}+q(x,t)$.  Equating coefficients of
power of $|\sigma|$ to zero, we get $\omega\cdot\nabla\phi(x)=-\omega\cdot
A(x)$ for all $x\in\R^{n}.$ Then $w$ solves the following equation
\begin{equation}\label{Equation 2.19}
\para{i\p_{t}+\Delta_{\rho}+2iA(x)\cdot\nabla_{\rho}+h(x,t)}w(x,t)=L(x,t),
\end{equation}
where
\begin{equation}\label{h}
L(x,t)=-e^{i\phi(x)}\big(i\Delta\phi(x)-|\nabla\phi(x)|^{2}+2y\cdot\nabla\phi(x)+2A(x)\cdot
y-2A\cdot\nabla\phi(x)+h(x,t)\big).
\end{equation}
In light of (\ref{Equation 2.19}), we introduce the following map
$$\begin{array}{rrr}
U_{\rho}:L^{2}(0,T;H^{1}(\Omega))&\longrightarrow& L^{2}(0,T;H^{1}(\Omega)),\\
w&\longmapsto&G_{\rho}(-w\,h+L).
\end{array}$$
Applying (\ref{l'equation 2.13}) with $k=1$ and $f=h\,(w-\tilde{w})$, we get
for all $w,\tilde{w}\in L^{2}(0,T;H^{1}(\Omega))$ that
$$\begin{array}{lll}
\|U_{\rho}(w)-U_{\rho}(\tilde{w})\|_{L^{2}(0,T;H^{1}(\Omega))}&=&\|G_{\rho}(h\,(w-\tilde{w}))\|_{L^{2}(0,T;H^{1}(\Omega))}\\
&\leq&\displaystyle\frac{C}{\sigma}\|h\|_{\mathcal{X}}\|w-\tilde{w}\|_{L^{2}(0,T;H^{1}(\Omega))}.
\end{array}$$
Taking $\sigma_{0}$ sufficiently large so that
$\sigma_{0}>2C\|h\|_{\mathcal{X}},$ then, for each $\sigma>\sigma_{0}$, $U_{\rho}$
admits a unique fixed point $w\in L^{2}(0,T;H^{1}(\Omega))$ such that
$U_{\rho}(w)=w$. Again, applying (\ref{l'equation 2.13}) with $k=1$ and
$f=-hw+L$, one gets
$$\begin{array}{lll}
\|w\|_{L^{2}(0,T;H^{1}(\Omega))}&=&\|G_{\rho}(-hw+L)\|_{L^{2}(0,T;H^{1}(\Omega))}\\
&\leq&\displaystyle \frac{1}{2}\|w\|_{L^{2}(0,T;H^{1}(\Omega)}+\frac{C}{\sigma}\|L\|_{L^{2}(0,T;H^{1}(\Omega))}.
\end{array}$$
Therefore, in view of Lemma \ref{Lm2.1} and (\ref{h}), we get
\begin{equation}\label{w}
\|w\|_{L^{2}(0,T;H^{1}(\Omega))}
\leq \displaystyle\frac{C}{\sigma}.
\end{equation}
Next, differentiating  the equation (\ref{Equation 2.19}) twice with respect
to $t$,
 taking
into account that $\|h\|_{\mathcal{X}}$ is uniformly bounded with respect to
$\sigma$, and proceeding as before, we show that
\begin{equation}\label{wt}
\|\p_{t}^{k}w\|_{L^{2}(0,T;H^{1}(\Omega))}\leq
\frac{C}{\sigma},\,\,\,\,\,k=1,2.\end{equation}
Finally, from (\ref{w}) and  Lemma \ref{Lm2.1}, we obtain
\begin{eqnarray}\label{l'equation 2.23}
\|w\|_{L^{2}(0,T;H^{2}(\Omega))}&\leq& C\|-wh+L\|_{L^{2}(0,T;H^{1}(\Omega))}\cr
&\leq&C\Big(\displaystyle\frac{C}{\sigma}\|h\|_{\mathcal{X}}+C \Big)\cr
&\leq&C,
\end{eqnarray}
 by applying (\ref{l'equation
2.13}) with $k=2$ and $f=-wh+L$. Thus, we get the desired result by combining (\ref{w})-(\ref{l'equation
2.23}).

\end{proof}
\section{Stability estimate for the magnetic field}\label{Sec3}
In this section, we prove Theorem\ref{Thm1} by means of the  geometrical
optics solutions
\begin{equation}\label{Eq10}
u_{j}(x,t)=e^{-i\big((\rho_{j}\cdot\rho_{j})t+x\cdot\rho_{j}\big)}\Big(e^{i\phi_{j}(x)}+w_{j}(x,t)\Big),\quad
j=1,2,
\end{equation}
 associated $A_{j}$ and $q_{j}$. Here we choose
$\rho_{j}=\sigma\omega_{j}$ and we recall that the correction term $w_{j}$
satisfies (\ref{Equation 2.17}) and
 that $\phi_{j}(x)=N^{-1}_{\omega_{j}^{*}}(-\omega_{j}^{*}.A_{j})$ solves the
transport equation
$$\omega_{j}^{*}.\nabla \phi_{j}(x)=-\omega_{j}^{*}.A(x),\,\,\,\,\,\,x\in\R^{n}.$$
Let us  specify the choice of $\rho_{j}$: we consider $\xi\in\R^{n}$ and $\omega=\omega_{\Re}+i\omega_{\Im}$ with $\omega_{\Re},\,\omega_{\Im}\in \mathbb{S}^{n-1}$ and $\omega_{\Re}.\omega_{\Im}=\xi.\omega_{\Re}=\xi.\omega_{\Im}=0$. for each $\sigma>|\xi|/{2}$, we
denote
\begin{equation}\label{Eq8}
\rho_{1}=\sigma\para{i\omega_{\Im}+\para{-\frac{\xi}{2\sigma}+\sqrt{1-\frac{|\xi|^{2}}{4\sigma^{2}}}\omega_{\Re}}}=\sigma\omega_{1}^{*},
\end{equation}
\begin{equation}\label{Eq9}
\rho_{2}=\sigma\para{-i\omega_{\Im}+\para{\frac{\xi}{2\sigma}+\sqrt{1-\frac{|\xi|^{2}}{4\sigma^{2}}}\omega_{\Re}}}=\sigma\omega_{2}^{*}.
\end{equation}
Notice that $\rho_{j}.\rho_{j}=0.$ In this  section, we aim for recovering
the magnetic field $d\alpha_{A}$ from the boundary operator
$$\begin{array}{ccc}
\Lambda_{A,q}:L^{2}(\Omega)\times H^{2,1}(\Sigma)&\longrightarrow& H^{1}(\Omega)\times L^{2}(\Sigma)\\
g=(u_{0},f)&\longmapsto&\displaystyle\Big(u(.,T),(\p_{\nu}+iA\cdot\nu)u\displaystyle\Big).
\end{array}$$
We denote by
$$\Lambda_{A,q}^{1}=u(.,T),\quad \Lambda_{A,q}^{2}=(\p_{\nu}+iA\cdot \nu)u.$$
 We first establish an orthogonality identity for the magnetic potential $A=A_{1}-A_{2}$.
\subsection{A basic identity for the magnetic potential}
In this section, we derive an identity relating the magnetic
potential $A$ to the solutions $u_{j}$. We start by the following  result.
\begin{Lemm}\label{Lm4.2}
Let $\varepsilon>0$, $A_{j}\in\mathcal{A}_{\varepsilon}$ and  $u_{j}$ be the
solutions given by (\ref{Eq10}) $j=1,\,2$. Then, for all $\xi\in\R^{n}$ and
$\sigma>max(\sigma_{0},|\xi|/2)$, we have
$$\int_{Q}iA(x)\cdot\big(\overline{u_{1}}\nabla u_{2}-u_{2}\nabla \overline{u_{1}}\big)\,dx\,dt
=\int_{Q}A(x)\cdot(\rho_{2}+\overline{\rho_{1}})e^{-ix\cdot\xi}e^{i(\phi_{2}-\overline{\phi_{1}})(x)}+I(\xi,\sigma),$$
where the remaining term $I(\xi,\sigma)$ is uniformly bounded with respect to
$\sigma$ and $\xi$.
\end{Lemm}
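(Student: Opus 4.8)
The plan is to derive the identity by testing the Schr\"odinger equation \eqref{Eq1} against the geometrical optics solutions \eqref{Eq10} and integrating by parts, in the spirit of the boundary integral identities used in \cite{[MB],[H]}. More precisely, let $u_{1}$ be a solution of the equation associated to $(A_{1},q_{1})$ of the form \eqref{Eq10}, and let $u_{2}$ be a solution of the equation associated to $(A_{2},q_{2})$. The first step is to form the standard Green-type identity: multiply $(i\p_{t}+\Delta_{A_{1}}+q_{1})u_{1}=0$ by $\overline{u_{2}}$, take the complex conjugate relation for $u_{2}$, subtract, and integrate over $Q=\Omega\times(0,T)$. After integration by parts in $x$ and in $t$, the terms involving the common data cancel up to boundary contributions on $\Sigma$ and initial/final contributions on $\Omega\times\{0,T\}$; these are controlled by $\Lambda_{A_{1},q_{1}}-\Lambda_{A_{2},q_{2}}$ and hence absorbed into a remainder. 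What survives in the interior is precisely a term of the form $\int_{Q}\bigl(iA\cdot(\overline{u_{2}}\nabla u_{1}-u_{1}\nabla\overline{u_{2}})+(q_{1}-q_{2})u_{1}\overline{u_{2}}+(\text{lower order in }A)\bigr)\,dx\,dt$, where $A=A_{1}-A_{2}$. Since the present lemma isolates only the magnetic part and folds everything else into $I(\xi,\sigma)$, the $q$-term and the quadratic-in-$A$ terms will be shown to be uniformly bounded in $\sigma,\xi$ using \eqref{Equation 2.17} and the uniform bounds $\|q_{j}\|_{\mathcal{X}}\le M$, $\|A_{j}\|_{W^{3,\infty}}\le\varepsilon$.

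The second step is to substitute the explicit form \eqref{Eq10} into the surviving magnetic term. Writing $u_{j}=e^{-i((\rho_{j}\cdot\rho_{j})t+x\cdot\rho_{j})}(e^{i\phi_{j}}+w_{j})$ and recalling $\rho_{j}\cdot\rho_{j}=0$, the oscillatory phases combine into $e^{-ix\cdot(\rho_{2}-\overline{\rho_{1}})}$; by the choice \eqref{Eq8}--\eqref{Eq9} one has $\rho_{2}-\overline{\rho_{1}}=\xi$ (after matching $\omega_{j}^{*}$ appropriately), so the net exponential is exactly $e^{-ix\cdot\xi}$. Differentiating the amplitude $e^{i\phi_{j}}+w_{j}$ produces the leading piece $A(x)\cdot(\rho_{2}+\overline{\rho_{1}})\,e^{-ix\cdot\xi}\,e^{i(\phi_{2}-\overline{\phi_{1}})(x)}$, which is the asserted main term, plus a collection of residual contributions: those where the gradient falls on $\phi_{j}$ rather than producing a factor of $\rho_{j}$, those involving $\nabla w_{j}$ or $w_{j}$, and those multiplied by the bounded factors. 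One then checks that each such residual term is $O(1)$ uniformly in $\sigma$ and $\xi$: the terms with $\nabla\phi_{j}$ are bounded because $A\cdot\nabla\phi_{j}$ is bounded in $L^{\infty}$ by Lemma \ref{Lm2.1} with the uniform bound on $A_{j}$; the terms with $w_{j}$, $\nabla w_{j}$ are bounded by \eqref{Equation 2.17}, which gives $\|w_{j}\|_{L^{2}(0,T;H^{2})}\le C$ and $\sigma\|w_{j}\|_{H^{2}(0,T;H^{1})}\le C$, the factor $\sigma$ from $\rho_{j}$ being compensated by the gain of $\sigma^{-1}$ in $\|w_{j}\|_{L^{2}(0,T;H^{1})}$. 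Collecting all of these into $I(\xi,\sigma)$ completes the argument.

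The main obstacle I expect is the careful bookkeeping needed to guarantee that every residual term is genuinely uniformly bounded, in particular tracking how powers of $\sigma$ coming from differentiating the exponential $e^{-ix\cdot\rho_{j}}$ (which brings down a factor $\rho_{j}$ of size $\sim\sigma$) are always matched against the $\sigma^{-1}$ decay in the estimates \eqref{Equation 2.17} for $w_{j}$ and against the boundedness (not decay) of the $\phi_{j}$ terms. One must also be attentive to the boundary terms on $\Sigma$: since $A_{1}=A_{2}$ on $\Gamma$, the magnetic normal-derivative boundary contributions combine correctly and are expressed through $\Lambda^{2}_{A_{1},q_{1}}-\Lambda^{2}_{A_{2},q_{2}}$, while the final-time term is handled by $\Lambda^{1}_{A_{1},q_{1}}-\Lambda^{1}_{A_{2},q_{2}}$; verifying that these combinations make sense given the mapping properties from Theorem \ref{Thm1.1} is a delicate but routine point. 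A secondary technical care is that $\phi_{j}$ depends on $\omega_{j}^{*}$ which depends on $\sigma$ through $\sqrt{1-|\xi|^{2}/4\sigma^{2}}$; however, for $\sigma$ large this converges to $\omega_{\Re}$ and the Sobolev norms of $\phi_{j}$ stay uniformly bounded, so this causes no difficulty.
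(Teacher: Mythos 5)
Your second step is, in substance, the paper's own proof, and it is correct: one substitutes \eqref{Eq10} into the integrand, uses $\rho_{2}-\overline{\rho_{1}}=\xi$ to reduce the oscillatory phase to $e^{-ix\cdot\xi}$, reads off the main term $A\cdot(\rho_{2}+\overline{\rho_{1}})e^{-ix\cdot\xi}e^{i(\phi_{2}-\overline{\phi_{1}})}$, and bounds the residuals by pairing each factor $|\rho_{j}|\sim\sigma$ with the $\sigma^{-1}$ decay of $\|w_{j}\|_{L^{2}(0,T;H^{1}(\Omega))}$ from \eqref{Equation 2.17} and by the uniform $L^{\infty}$ bounds on $\phi_{j},\nabla\phi_{j}$ from Lemma \ref{Lm2.1}. (Watch the index convention: you wrote $\overline{u_{2}}\nabla u_{1}-u_{1}\nabla\overline{u_{2}}$, whose phase is $e^{+ix\cdot\xi}$ with the choices \eqref{Eq8}--\eqref{Eq9}; the lemma's integrand is $\overline{u_{1}}\nabla u_{2}-u_{2}\nabla\overline{u_{1}}$.) Your first step, however, is extraneous and rests on a misreading of what the lemma asserts. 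The left-hand side is literally the integral $\int_{Q}iA\cdot(\overline{u_{1}}\nabla u_{2}-u_{2}\nabla\overline{u_{1}})\,dx\,dt$: nothing is tested against an equation, no Green formula is invoked, and no Dirichlet-to-Neumann data appears. The Green-type identity you describe, with the $\Sigma$- and $t=T$-contributions expressed through $\Lambda^{1},\Lambda^{2}$ and with the $q$- and $|A_{1}|^{2}-|A_{2}|^{2}$-terms, is the content of the \emph{subsequent} step of the paper (the proof of Lemma \ref{Lemme3.3}), where the present lemma is used as an input. More importantly, if those boundary and final-time contributions were really ``absorbed into'' $I(\xi,\sigma)$ as your first paragraph suggests, the claim that $I(\xi,\sigma)$ is uniformly bounded in $\sigma$ would be false: since $\rho_{j}$ has imaginary part of modulus $\sigma$, the traces of $u_{1},u_{2}$ on $\Sigma$ and at $t=T$ grow like $e^{C\sigma}$, and the paper deliberately keeps those terms outside $I(\xi,\sigma)$ so as to trade the $e^{C\sigma}$ growth against $\|\Lambda_{A_{2},q_{2}}-\Lambda_{A_{1},q_{1}}\|$ only at the final optimization of $\sigma$. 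Provided you drop the first step (or relocate it to the next lemma), your argument is the paper's.
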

\begin{proof}{}
In light of  (\ref{Eq10}), we have by direct computation
$$\begin{array}{lll}
\overline{u_{1}}\nabla u_{2}-u_{2}\nabla\overline{u_{1}}&=&e^{-ix\cdot(\rho_{2}-\overline{\rho_{1}})}\Big[-i \rho_{2}e^{i(\phi_{2}-\overline{\phi_{1}})}
-i\overline{\rho_{1}}e^{i(\phi_{2}-\overline{\phi_{1}})}\\
&&+i\nabla \phi_{2} e^{i(\phi_{2}-\overline{\phi_{1}})}+i\nabla \overline{\phi_{1}}e^{i(\phi_{2}-\overline{\phi_{1}})}-i\rho_{2}w_{2}e^{-i
 \overline{\phi_{1}}}-i\overline{\rho_{1}}\overline{w_{1}}e^{i\phi_{2}}\\
&&+\nabla w_{2}e^{-i\overline{\phi_{1}}}-\nabla \overline{w_{1}}e^{i\phi_{2}}-i\rho_{2}\overline{w_{1}}e^{i\phi_{2}}-i\overline{\rho_{1}}
w_{2}e^{-i\overline{\phi_{1}}}+i\nabla \phi_{2} \overline{w_{1}}e^{i\phi_{2}}\\
&&+iw_{2}\nabla \overline{\phi_{1}}e^{-i\overline{\phi_{1}}}-i\rho_{2}w_{2}\overline{w_{1}}-i\overline{\rho_{1}}\overline{w_{1}}w_{2}
+\nabla w_{2}\overline{w_{1}}-\nabla \overline{w_{1}} w_{2}\Big].\\
\end{array}$$
Therefore, as we have $\rho_{2}-\overline{\rho_{1}}=\xi$, this yields that
$$\begin{array}{lll}
\displaystyle\int_{Q}iA(x)\cdot\para{\overline{u_{1}}\nabla u_{2}-u_{2}\nabla\overline{u_{1}}}dx\,dt&=&
\displaystyle\int_{Q}
A(x)\cdot(\rho_{2}+\overline{\rho_{1}})e^{-ix.\xi}e^{i(\phi_{2}-\overline{\phi_{1}})}\,dx\,dt+I(\xi,\sigma),
\end{array}$$
where
$I(\xi,\sigma)=\displaystyle\int_{Q}iA(x)\cdot\Big(\psi_{1}(x,t)+\psi_{2}(x,t)\Big)\,dx\,dt,$
and  $\psi_{1}$, $\psi_{2}$ stand for
$$\psi_{1}=-i(\rho_{2}+\overline{\rho_{1}})\left( w_{2}e^{-i\overline{\phi_{1}}}+\overline{w_{1}}e^{i\phi_{2}}+w_{2}\overline{w_{1}}\right),$$
$$\begin{array}{lll}
\psi_{2}&=&e^{i\phi_{2}}\big(i\nabla\phi_{2}\overline{w_{1}}-\nabla \overline{w_{1}}\big)+e^{-i\overline{\phi_{1}}}\big(\nabla w_{2}
+i\nabla\overline{\phi_{1}}w_{2}\big)
+\nabla w_{2}\overline{w_{1}}-\nabla \overline{w_{1}}w_{2}
+i\big(\nabla\phi_{2}+\nabla\overline{\phi_{1}}\big)e^{i(\phi_{2}-\overline{\phi_{1}})}.
\end{array}$$
In view of bounding  $|I(\xi,\sigma)|$ uniformly with respect to $\xi$ and
$\sigma$, we use the fact that  $A$ is extended by zero outside $\Omega$ and use Lemma \ref{Lm2.1}
to get
$$\|\phi_{j}\|_{L^{\infty}(\Omega)}\leq C \|A_{j}\|_{L^{\infty}(\R^{n})}\leq C \varepsilon,\,\,\,\,\,\,\,j=1,\,2.$$
Recalling  (\ref{eq}) and (\ref{Equation 2.17}) and applying Lemma \ref{Lm2.1}, we
get
\begin{equation}\label{mag1}\|\psi_{j}\|_{L^{1}(Q)}\leq C\para{C +\frac{1}{\sigma}}\leq
C,\,\,\,\,\,\,\,j=1,\,2,
\end{equation}
which yields the desired result.
\end{proof}
With the help of the above lemma we may now derive the following
orthogonality  identity for the magnetic potential.
\begin{Lemm}\label{Lm4.3}
Let $\xi\in\R^{n}$ and  $\sigma>max(\sigma_{0},|\xi|/{2})$. Then, we have the following
identity
$$\begin{array}{lll}
\displaystyle\int_{Q}A(x)\!\!\!\!\!&\cdot&\!\!\!\! (\rho_{2}+\overline{\rho_{1}})e^{-ix\cdot\xi}e^{i(\phi_{2}-\overline{\phi_{1}})}\,dx\,dt
=2\sigma T\displaystyle\int_{\Omega}\overline{\omega}\cdot A(x) e^{-ix\cdot\xi}\,dx+J(\xi,\sigma),
\end{array}$$
with $|J(\xi,\sigma)|\leq C|\xi|$, where $C$ is independent of $\sigma$ and
$\xi$.
\end{Lemm}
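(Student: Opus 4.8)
The plan is to reduce everything to a spatial integral and then to apply the identity of Lemma \ref{Lm2.2} after a perturbation argument. Since nothing in the integrand on the left-hand side depends on $t$, the time integration only produces a factor $T$, so it suffices to show
\[
\int_{\Omega}A(x)\cdot(\rho_{2}+\overline{\rho_{1}})e^{-ix\cdot\xi}e^{i(\phi_{2}-\overline{\phi_{1}})(x)}\,dx=2\sigma\int_{\Omega}\overline{\omega}\cdot A(x)e^{-ix\cdot\xi}\,dx+\widetilde{J}(\xi,\sigma),\qquad |\widetilde{J}(\xi,\sigma)|\le C|\xi|.
\]
From (\ref{Eq8})--(\ref{Eq9}) one computes $\rho_{2}+\overline{\rho_{1}}=2\sigma\big(\sqrt{1-\tfrac{|\xi|^{2}}{4\sigma^{2}}}\,\omega_{\Re}-i\omega_{\Im}\big)=2\sigma\overline{\omega}+r$ with $r=2\sigma\big(\sqrt{1-\tfrac{|\xi|^{2}}{4\sigma^{2}}}-1\big)\omega_{\Re}$, and since $\sigma>|\xi|/2$ we get $|r|\le\frac{|\xi|^{2}}{2\sigma}\le|\xi|$. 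As $\|A\|_{L^{1}(\Omega)}$ is bounded and, by Lemma \ref{Lm2.1}, $\|\phi_{j}\|_{L^{\infty}(\Omega)}\le C\varepsilon$ (so that $|e^{i(\phi_{2}-\overline{\phi_{1}})}|\le e^{C\varepsilon}$), the contribution of $r$ to the left-hand side is $O(|\xi|)$ and goes into $\widetilde{J}$; it remains to handle $2\sigma\int_{\Omega}\overline{\omega}\cdot A\,e^{-ix\cdot\xi}e^{i(\phi_{2}-\overline{\phi_{1}})}\,dx$.

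The key point is that the phase $\phi_{2}-\overline{\phi_{1}}$, which is built from the $\sigma$-dependent vectors $\omega_{j}^{*}$ of (\ref{Eq8})--(\ref{Eq9}), is $O(|\xi|/\sigma)$-close in $L^{\infty}(\Omega)$ to the phase $-N_{\overline{\omega}}^{-1}(-\overline{\omega}\cdot A)$, to which Lemma \ref{Lm2.2} applies. Indeed $\phi_{2}=N_{\omega_{2}^{*}}^{-1}(-\omega_{2}^{*}\cdot A_{2})$ and, since $\overline{N_{\omega}^{-1}(g)}=N_{\overline{\omega}}^{-1}(\overline{g})$ and the $A_{j}$ are real, $\overline{\phi_{1}}=N_{\overline{\omega_{1}^{*}}}^{-1}(-\overline{\omega_{1}^{*}}\cdot A_{1})$. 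From (\ref{Eq8})--(\ref{Eq9}), $|\omega_{2}^{*}-\overline{\omega}|\le|\xi|/\sigma$ and $|\overline{\omega_{1}^{*}}-\overline{\omega}|\le|\xi|/\sigma$. A perturbation identity for $N_{\omega}^{-1}$ — namely, with $v=N_{\overline{\omega}}^{-1}(-\overline{\omega}\cdot B)$,
\[
N_{\omega'}^{-1}(-\omega'\cdot B)-N_{\overline{\omega}}^{-1}(-\overline{\omega}\cdot B)=N_{\omega'}^{-1}\big(-(\omega'-\overline{\omega})\cdot(B+\nabla v)\big),
\]
together with the $\omega$-uniform bound of Lemma \ref{Lm2.1} (applied in $W^{1,\infty}$ after replacing the $A_{j}$ by suitable compactly supported extensions) — gives $\|\phi_{2}-\overline{\phi_{1}}-N_{\overline{\omega}}^{-1}(\overline{\omega}\cdot A)\|_{L^{\infty}(\Omega)}\le C|\xi|/\sigma$, where $A=A_{1}-A_{2}$ and $N_{\overline{\omega}}^{-1}(-\overline{\omega}\cdot A_{2})-N_{\overline{\omega}}^{-1}(-\overline{\omega}\cdot A_{1})=N_{\overline{\omega}}^{-1}(\overline{\omega}\cdot A)=-N_{\overline{\omega}}^{-1}(-\overline{\omega}\cdot A)$. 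Since all exponents have imaginary parts bounded by $C(\varepsilon+|\xi|/\sigma)\le C$, this yields $|e^{i(\phi_{2}-\overline{\phi_{1}})}-e^{-iN_{\overline{\omega}}^{-1}(-\overline{\omega}\cdot A)}|\le C|\xi|/\sigma$, hence $2\sigma\int_{\Omega}\overline{\omega}\cdot A\,e^{-ix\cdot\xi}e^{i(\phi_{2}-\overline{\phi_{1}})}\,dx=2\sigma\int_{\Omega}\overline{\omega}\cdot A\,e^{-ix\cdot\xi}e^{-iN_{\overline{\omega}}^{-1}(-\overline{\omega}\cdot A)}\,dx+O(|\xi|)$.

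Finally, I apply Lemma \ref{Lm2.2}: taking the complex conjugate of its identity, used with the vector $\omega$ (note $\omega_{\Re}\cdot\xi=\omega_{\Im}\cdot\xi=0$ and $\overline{N_{\omega}^{-1}(-\omega\cdot A)}=N_{\overline{\omega}}^{-1}(-\overline{\omega}\cdot A)$ since $A$ is real), gives
\[
\int_{\R^{n}}\overline{\omega}\cdot A(x)\,e^{-iN_{\overline{\omega}}^{-1}(-\overline{\omega}\cdot A)(x)}e^{-ix\cdot\xi}\,dx=\int_{\R^{n}}\overline{\omega}\cdot A(x)\,e^{-ix\cdot\xi}\,dx;
\]
as $A$ vanishes outside $\Omega$, both sides are integrals over $\Omega$, so the main term above equals exactly $2\sigma\int_{\Omega}\overline{\omega}\cdot A\,e^{-ix\cdot\xi}\,dx$. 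Collecting the two $O(|\xi|)$ remainders (from $r$ and from the phase replacement) and multiplying by $T$ gives the claimed identity with $|J(\xi,\sigma)|\le C|\xi|$, $C$ independent of $\sigma$ and $\xi$.

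The main obstacle is the perturbation estimate of the second paragraph: showing that $\phi_{2}-\overline{\phi_{1}}$ deviates from the $\xi$-orthogonal phase $-N_{\overline{\omega}}^{-1}(-\overline{\omega}\cdot A)$ by only $O(|\xi|/\sigma)$ in $L^{\infty}(\Omega)$, uniformly in $\sigma$ and $\xi$. It is exactly this gain of a factor $1/\sigma$ that, after multiplication by the $2\sigma$ amplitude, turns what would be a term blowing up with $\sigma$ into an admissible $O(|\xi|)$ error, and that makes Lemma \ref{Lm2.2} usable despite the $\sigma$-dependence of the $\omega_{j}^{*}$. Everything else — the trivial $t$-integration, the elementary expansion of $\rho_{2}+\overline{\rho_{1}}$, and the conjugated form of Lemma \ref{Lm2.2} — is routine.
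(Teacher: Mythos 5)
Your proof is correct and follows essentially the same route as the paper: you split $\rho_{2}+\overline{\rho_{1}}=2\sigma\overline{\omega}+O(|\xi|)$, replace the $\sigma$-dependent phase $\phi_{2}-\overline{\phi_{1}}$ by the $\sigma$-independent phase $\Psi_{2}-\overline{\Psi_{1}}=-N_{\overline{\omega}}^{-1}(-\overline{\omega}\cdot A)$ at cost $O(\sigma\cdot|\xi|/\sigma)=O(|\xi|)$, and apply Lemma \ref{Lm2.2} to the resulting main term, which is exactly the paper's decomposition into $J_{1}$, $J_{2}$, $J_{3}$. The only (minor) difference is that where the paper invokes the continuity of $\omega\mapsto N_{\omega}^{-1}(-\omega\cdot A)$ from Lemma 2.4 of \cite{[L]}, you re-derive this continuity by a perturbation identity for $N_{\omega}^{-1}$ together with Lemma \ref{Lm2.1}; this is the same estimate used in the same place.
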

\begin{proof}{}
In view of (\ref{Eq8}) and (\ref{Eq9}), we have
\begin{eqnarray}\label{mag3}
\displaystyle\int_{Q}A(x)\!\!\!\!\!&\cdot&\!\!\!\!\!(\rho_{2}+\overline{\rho_{1}})
e^{-ix\cdot\xi}e^{i (\phi_{2}-\overline{\phi_{1}})} \,dx\,dt
=2\sigma \displaystyle\int_{Q}\overline{\omega}\cdot A(x) e^{-ix\cdot\xi}e^{i(\phi_{2}-\overline{\phi_{1}})}\,dx\,dt\cr
&&-2\sigma \displaystyle\para{1-\displaystyle\sqrt{1-\displaystyle|\xi|^{2}/4\sigma^{2}}}
\displaystyle\int_{Q}\omega_{\Re}\cdot A(x)e^{-ix\cdot\xi}e^{i(\phi_{2}-\overline{\phi_{1}})}\,dx\,dt,
\end{eqnarray}
where we recall that
$$\overline{\phi_{1}}=N^{-1}_{\overline{\omega_{1}^{*}}}(-\overline{\omega_{1}^{*}}\cdot A_{1}),\,\,\,\,\,\,\,\,\,\,\,\phi_{2}=N^{-1}_{\omega^{*}_{2}}(-\omega^{*}_{2}\cdot
A_{2}).$$  Set
$\overline{\Psi_{1}}=N^{-1}_{\overline{\omega}}(-\overline{\omega}\cdot
A_{1})$ and $\Psi_{2}=N^{-1}_{\overline{\omega}}(-\overline{\omega}\cdot
A_{2})$ in such away that we have
$$\Psi_{2}-\overline{\Psi_{1}}=N^{-1}_{\overline{\omega}}(-(-\overline{\omega}\cdot A))=-N_{\overline{\omega}}^{-1}(-\overline{\omega}\cdot A).$$
Then, we infer from (\ref{mag3}) that
$$\begin{array}{lll}
\displaystyle\int_{Q}A(x)\cdot(\rho_{2}+\overline{\rho_{1}})e^{-ix\cdot
\xi}e^{i(\phi_{2}-\overline{\phi_{1}})}dxdt&=&
 J_{1}(\xi,\sigma)+J_{2}(\xi,\sigma)+J_{3}(\xi,\sigma),
\end{array}$$
where we have set
$$J_{1}(\xi,\sigma)=2\sigma\displaystyle\int_{Q}\overline{\omega}\cdot A(x)e^{-ix\cdot
\xi}e^{i(\Psi_{2}-\overline{\Psi_{1}})}\,dx\,dt,$$
$$J_{2}(\xi,\sigma)=-2\sigma \displaystyle\int_{Q}\overline{\omega}\cdot A(x)e^{-ix\cdot \xi}\displaystyle\para{e^{i(\Psi_{2}-\overline{\Psi_{1}})}-
e^{i(\phi_{2}-\overline{\phi_{1}})}}\,dx\,dt,$$
and 
$$J_{3}(\xi,\sigma)=-2\sigma \displaystyle\para{1-\displaystyle\sqrt{1-\displaystyle|\xi|^{2}/4\sigma^{2}}}\displaystyle\int_{Q}\omega_{\Re}\cdot A(x)e^{-ix\cdot \xi}e^{i(\phi_{2}-\overline{\phi_{1}})}\,dx\,dt.$$
Using Lemma \ref{Lm2.2}, one can see that
$$\begin{array}{lll}
J_{1}(\xi,\sigma)&=
&2\sigma T\displaystyle\int_{\Omega}\overline{\omega}\cdot A(x)e^{iN^{-1}_{\overline{\omega}}(-\overline{\omega}\cdot (-A))} e^{-ix\cdot \xi}\,dx\\
&=&2\sigma T\, \displaystyle\int_{\Omega}\overline{\omega}\cdot A(x) e^{-ix\cdot \xi}\,dx\,dt.
\end{array}$$
Now it remains to  upper bound  the absolute value of
$J:=J_{2}+J_{3}$. We start by inserting $e^{i(\Psi_{2}-\overline{\phi_{1}})}$
into $J_{2}(\xi,\sigma)$, getting 
$$\begin{array}{lll}
 J_{2}(\xi,\sigma)
&=&-2\sigma T \displaystyle\int_{\Omega}\overline{\omega}\cdot A(x)e^{-ix\cdot \xi}\displaystyle\para{e^{i\Psi_{2}}\displaystyle\para{e^{-i\overline{\Psi_{1}}}
-e^{-i\overline{\phi_{1}}}}
+e^{-i\overline{\phi_{1}}}\displaystyle\para{e^{i\Psi_{2}}-e^{i\phi_{2}}}} \,dx.
\end{array}$$
Further, as $N^{-1}_{\omega}(-\omega\cdot A)$ depends continuously on $\omega$, 
according to Lemma $2.4$ in \cite{[L]}, we get for all $|\xi|\leq2\sigma$
$$|J_{2}(\xi,\sigma)|\leq C_{T}\sigma \para{|\overline{\omega}-\overline{\omega_{1}^{*}}|+|\overline{\omega}-\omega_{2}^{*}
|}.$$
Hence, as  $1-\sqrt{1-|\xi|^{2}/4\sigma^{2}}\leq |\xi|^{2}/4\sigma^{2}$
for all $|\xi|\leq2\sigma$, we deduce from (\ref{Eq8}), (\ref{Eq9}) and the above inequality  that
$$|J_{2}(\xi,\sigma)|\leq C_{T}\para{\sigma \frac{|\xi|^{2}}{4\sigma^{2}}+|\xi|}\leq C_{T}\,|\xi|.$$
Arguing in the same way, we find that $ |J_{3}(\xi,\sigma)|\leq C_{T} |\xi|,
$ for some  positive constant $C_{T}$ which is independent of $\xi$ and
$\sigma$.
\end{proof}
\subsection{Estimating the Fourier transform of the magnetic field}
We aim to relate the Fourier transform of the magnetic field
$d{\alpha_{A_{1}}}-d{\alpha_{A_{2}}}$ to the measurement
$\Lambda_{A_{1},q_{1}}-\Lambda_{A_{2},q_{2}}$. To this end, we introduce the
following notation: we put
$${a}_{k}(x)=(A_{1}-A_{2})(x)\cdot e_{k}=A(x)\cdot e_{k},$$
where $(e_{k})_{k}$ is the canonical basis of $\R^{n}$, and
\begin{equation}\label{def}
\sigma_{j,k}(x)=\frac{\p {a}_{k}}{\p x_{j}}(x)-\frac{\p {a_{j}}}{\p
x_{k}}(x), \,\,\,\,\,\,j,\,k=1,...,n.
\end{equation}
We recall that the Green formula for the magnetic Laplacian
\begin{equation}\label{green}
\int_{\Omega} (\Delta_{A}u \overline{v}-u\overline{\Delta_{A} v})
\,dx=-\int_{\Gamma} \Big( (\p_{\nu}+i\nu.A
)u\overline{v}-u\overline{(\p_{\nu}+iA.\nu)v}\Big)\,d\sigma_{x},
\end{equation}
holds  for any $u,\,v\in H^{1}(\Omega)$ such that $\Delta u,\,\Delta v\in
L^{2}(\Omega)$. Here $d\sigma_{x}$ is the Euclidean surface measure on
$\Gamma$.  We  estimate the Fourier transform of $\sigma_{j,k}$ as
follows.
\begin{Lemm}\label{Lemme3.3}
Let $\xi\in\R^{n}$ and  $\sigma>\max(\sigma_{0},|\xi|/2)$, where
$\sigma_{0}$ is as in Lemma \ref{Prop3.1}. Then we have
$$<\xi>^{-1}|\widehat{\sigma}_{j,k}(\xi)|\leq C\para{e^{C\sigma}\|\Lambda_{A_{2},q_{2}}-\Lambda_{A_{1},q_{1}}\|+\frac{1}{\sigma}+\frac{|\xi|}{|\sigma|} },$$
where $C$ is independent of $\xi$ and $\sigma$.
\end{Lemm}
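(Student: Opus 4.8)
The plan is to combine a Green-type integral identity with the orthogonality relations already established in Lemmas \ref{Lm4.2} and \ref{Lm4.3}, and then to recover the Fourier coefficients of the magnetic field by a short linear-algebra argument (this last step is where $n\geq 3$ enters). Concretely, let $u_2$ be the geometrical optics solution (\ref{Eq10}) built for $(A_2,q_2)$ in Lemma \ref{Prop3.1}, and let $\widetilde u_1$ be the solution of the initial boundary value problem for $(i\p_t+\Delta_{A_1}+q_1)\widetilde u_1=0$ in $Q$ carrying the same Cauchy data as $u_2$, that is $\widetilde u_1(\cdot,0)=u_2(\cdot,0)$ and $\widetilde u_1=u_2$ on $\Sigma$; Theorem \ref{Thm1.1} provides $\widetilde u_1$ together with $\p_\nu\widetilde u_1\in L^2(\Sigma)$. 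Then $u:=\widetilde u_1-u_2$ has zero Cauchy data and solves $(i\p_t+\Delta_{A_1}+q_1)u=-(\Delta_{A_1}-\Delta_{A_2}+q_1-q_2)u_2$. Multiplying by $\overline{u_1}$, with $u_1$ the geometrical optics solution (\ref{Eq10}) for $(A_1,q_1)$, integrating over $Q$ and using the Green formula (\ref{green}) together with an integration by parts in $t$ — the zeroth- and first-order contributions coming from $u_1$ cancel because $u_1$ solves its own equation — one obtains
\[
\int_Q\big((\Delta_{A_1}-\Delta_{A_2})u_2+(q_1-q_2)u_2\big)\overline{u_1}\,dx\,dt
= i\int_\Omega u(\cdot,T)\overline{u_1(\cdot,T)}\,dx-\int_\Sigma(\p_\nu+i\nu\cdot A_1)u\,\overline{u_1}\,dt\,d\sigma_{x}.
\]

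Since $A_1=A_2$ on $\Gamma$ and $u$ vanishes on $\Sigma$, the right-hand side is exactly the pairing of $(\Lambda_{A_1,q_1}-\Lambda_{A_2,q_2})(u_2(\cdot,0),u_2|_\Sigma)$ against $(\overline{u_1(\cdot,T)},\overline{u_1|_\Sigma})$; because of the factor $e^{-ix\cdot\rho_j}$ in (\ref{Eq10}) and $|\mathrm{Im}\,\rho_j|\le\sigma$, every $\sigma$-dependent Sobolev norm of the Cauchy data of $u_2$ and of the traces and final value of $u_1$ is bounded by $Ce^{C\sigma}$ (here (\ref{Equation 2.17}) and Lemma \ref{Lm2.1} control $w_j$ and $\phi_j$), so this right-hand side is $\le Ce^{C\sigma}\|\Lambda_{A_2,q_2}-\Lambda_{A_1,q_1}\|$. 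On the other side, writing $\Delta_{A_1}-\Delta_{A_2}=2iA\cdot\nabla+i\,\mathrm{div}(A)-(|A_1|^2-|A_2|^2)$ with $A=A_1-A_2$ and integrating by parts the term $i\,\mathrm{div}(A)u_2\overline{u_1}$ (no boundary term, as $A$ vanishes on $\Gamma$), the left-hand side equals $\int_Q iA\cdot(\overline{u_1}\nabla u_2-u_2\nabla\overline{u_1})\,dx\,dt+\int_Q(q_1-q_2-|A_1|^2+|A_2|^2)u_2\overline{u_1}\,dx\,dt$, and the last integral is $O(1)$ uniformly in $\sigma$ since $u_2\overline{u_1}=e^{-ix\cdot\xi}(e^{i\phi_2}+w_2)(e^{-i\overline{\phi_1}}+\overline{w_1})$ carries no $\sigma$-dependent exponential while $q_i,A_i$ are bounded. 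Applying Lemma \ref{Lm4.2} and then Lemma \ref{Lm4.3} to the first integral and collecting the errors $I(\xi,\sigma)$ ($O(1)$) and $J(\xi,\sigma)$ ($O(|\xi|)$) gives $\big|2\sigma T\int_\Omega\overline{\omega}\cdot A(x)e^{-ix\cdot\xi}\,dx\big|\le C\big(e^{C\sigma}\|\Lambda_{A_2,q_2}-\Lambda_{A_1,q_1}\|+1+|\xi|\big)$, hence after dividing by $2\sigma T$,
\[
|\overline{\omega}\cdot\widehat A(\xi)|\le C\Big(e^{C\sigma}\|\Lambda_{A_2,q_2}-\Lambda_{A_1,q_1}\|+\frac1\sigma+\frac{|\xi|}{\sigma}\Big)
\]
for every admissible $\omega=\omega_{\Re}+i\omega_{\Im}$.

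To finish, for $\xi\neq0$ fix a unit vector $\eta\perp\xi$ and choose a unit vector $\zeta$ orthogonal to both $\eta$ and $\xi$ (possible since $n\ge 3$); applying the displayed bound to $(\omega_{\Re},\omega_{\Im})=(\eta,\zeta)$ and to $(\eta,-\zeta)$ and averaging yields $|\eta\cdot\widehat A(\xi)|\le C\big(e^{C\sigma}\|\Lambda_{A_2,q_2}-\Lambda_{A_1,q_1}\|+\sigma^{-1}+|\xi|\sigma^{-1}\big)$ for every unit $\eta\perp\xi$. Decomposing $\widehat A(\xi)=c\,\xi/|\xi|+\widehat A_{\perp}(\xi)$ with $\widehat A_{\perp}(\xi)\perp\xi$, the longitudinal part cancels in $\widehat\sigma_{j,k}(\xi)=i\big(\xi_j\widehat A_k(\xi)-\xi_k\widehat A_j(\xi)\big)$, so $|\widehat\sigma_{j,k}(\xi)|\le 2|\xi|\,|\widehat A_{\perp}(\xi)|\le C|\xi|\max_{|\eta|=1,\ \eta\perp\xi}|\eta\cdot\widehat A(\xi)|$; combining this with the previous line and using $|\xi|\le\langle\xi\rangle$ gives the claimed estimate (the case $\xi=0$ being trivial, as $\widehat\sigma_{j,k}(0)=0$).

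The step I expect to be most delicate is the bookkeeping around the integral identity: justifying the Green formula (\ref{green}) and the time integration by parts at the regularity actually available for $u$ and $u_j$, checking that the Cauchy data of $u_2$ is an admissible argument of $\Lambda_{A_i,q_i}$, and verifying that every $\sigma$-dependent norm entering the duality pairing — in $H^2(\Omega)$, $H^{2,1}(\Sigma)$, $H^1(\Omega)$ and $L^2(\Sigma)$ — is indeed $\le Ce^{C\sigma}$. Once these points are settled, the remainder is a direct assembly of Lemmas \ref{Lm4.2}--\ref{Lm4.3} and elementary linear algebra.
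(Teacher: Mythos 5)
Your proposal is correct and follows essentially the same route as the paper: the Green-formula identity with $u=v-u_2$, the $e^{C\sigma}$ bounds on the Cauchy data and traces, and Lemmas \ref{Lm4.2}--\ref{Lm4.3} to reduce to a bound on $\overline{\omega}\cdot\widehat{A}(\xi)$. The only (cosmetic) difference is the final linear-algebra step: you recover all transverse components $\eta\cdot\widehat{A}(\xi)$, $\eta\perp\xi$, by flipping $\omega_{\Im}$ and note the longitudinal part cancels in $\widehat{\sigma}_{j,k}$, whereas the paper takes $\omega_{\Im}$ parallel to $\xi_j e_k-\xi_k e_j$ and combines the estimates for $\overline{\omega}$ and $-\omega$ -- the same idea in a slightly different dressing.
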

\begin{proof}{} First, for $\sigma>\sigma_{0}$, Lemma \ref{Prop3.1} guarantees the
existence of a geometrical optic solution $u_{2}$, of the form
$$u_{2}(x,t)=e^{-ix.\rho_{2}}(e^{i\phi_{2}(x)}+w_{2}(x,t))$$
to the magnetic Schr\"odinger equation
\begin{equation}
\left\{
  \begin{array}{ll}
   (i\p_{t}+\Delta_{A_{2}}+q_{2}(x,t))u_{2}(x,t)=0, & \mbox{in}\,\,Q, \\
  u_{2}(x,0)=u_{0} , & \mbox{in}\,\,\Omega, \\
  \end{array}
\right.
\end{equation}
where $\rho_{2}$ is given by (\ref{Eq9}).
Let us  denote by $f_{\sigma}:=u_{2|\Sigma}$. We  consider a solution $v$ to
the following non homogeneous boundary value problem
\begin{equation}\label{u}
\left\{
  \begin{array}{ll}
    (i\p_{t}+\Delta_{A_{1}}+q_{1}(x,t))v=0, & \mbox{in}\,\,Q, \\
    v(.,0)=u_{2}(.,0)=u_{0}, & \mbox{in}\,\,\Omega, \\
    v=u_{2}=f_{\sigma}, & \mbox{on}\,\Sigma.
  \end{array}
\right.
\end{equation}
Then, $u=v-u_{2}$ is a solution to the following homogenous boundary value
problem for the magnetic Schr\"odinger equation
$$
\left\{
  \begin{array}{ll}
    (i\p_{t}+\Delta_{A_{1}}+q_{1}(x,t))u=2iA\cdot \nabla u_{2}+h(x,t) u_{2}, & \mbox{in}\,Q, \\
    u(x,0)=0, & \hbox{in}\, \Omega,\\
    u(x,t)=0, & \mbox{on}\,\Sigma,
  \end{array}
\right.
$$
where
$$A=A_{1}-A_{2},\quad q=q_{1}-q_{2}\quad \mbox{and}\quad h=i \,\mbox{div}
A-(|A_{1}|^{2}-|A_{2}|^{2})+q.$$ On the other hand, with reference to Lemma \ref{Prop3.1} we consider a solution
$u_{1}$ to  the magnetic Shr\"odinger equation (\ref{Eq6}), associated with 
the potentials $A_{1}$ and $q_{1}$, of the form
$$u_{1}(x,t)=e^{-ix.\rho_{1}}(e^{i\phi_{1}(x)}+w_{1}(x,t)),$$
where $\rho_{1}$ is given by (\ref{Eq8}).
 Integrating by parts in the following integral, and  using the Green Formula  (\ref{green}), we get
\begin{eqnarray}\label{ap} \displaystyle\int_{Q}(i\p_{t}+\Delta_{
A_{1}}+q_{1})u\overline{u_{1}}dxdt\!\!\!&=&\!\!\!\!\!\!\displaystyle\int_{Q}2iA\cdot\nabla
u_{2}\overline{u_{1}}dxdt
+\displaystyle\int_{Q}\!\!\Big(i\mbox{div}A-(|A_{1}|^{2}-|A_{2}|^{2})+q\Big)u_{2}\overline{u_{1}} dx dt\cr
&=&i\displaystyle\int_{\Omega} u(.,T)\overline{u_{1}}(.,T)\,dx-\displaystyle\int_{\Sigma}(\p_{\nu}+iA_{1}.\nu)u \overline{u_{1}}\,d\sigma_{x}\,dt.
\end{eqnarray}
This  entails that
$$\begin{array}{lll}
\displaystyle\int_{Q}2iA\cdot \nabla u_{2} \overline{u_{1}}dx\,dt&=-&i\displaystyle\int_{\Omega} (\Lambda_{A_{2},q_{2}}^{1}-\Lambda_{A_{1},q_{1}}^{1})(g)\overline{u_{1}}(.,T)\,dx
+\displaystyle\int_{\Sigma}(\Lambda_{A_{2},q_{2}}^{2}
-\Lambda_{A_{1},q_{1}}^{2})(g) \overline{u_{1}}\,d\sigma_{x}\,dt\\
&&-\displaystyle\int_{Q}\!\!\Big(i\mbox{div}A-(|A_{1}|^{2}-|A_{2}|^{2})+q\Big)u_{2}\overline{u_{1}} dx dt,
\end{array}$$
 where $g=(u_{2|t=0},u_{2|\Sigma})$.
Upon applying the Stokes formula and using the fact that $A_{|\Gamma}=0$, 
we get
\begin{eqnarray}\label{Eq11}
\displaystyle\int_{Q}\!\! i A\!\cdot\!\big( \overline{u_{1}}\nabla u_{2}-u_{2}\nabla \overline{u_{1}}\big)dxdt
\!\!\!\!\!&=&\!\!\!\!\!-i\displaystyle\int_{\Omega}\!\! \displaystyle\para{\Lambda_{A_{2},q_{2}}^{1}\!-\!
\Lambda_{A_{1},q_{1}}^{1}}(g)\,\overline{u_{1}}(.,T)\,dx+\!\displaystyle\int_{\Sigma}\!\!
\displaystyle\para{\Lambda_{A_{2},q_{2}}^{2}\!-\!\Lambda_{A_{1},q_{1}}^{2}}(g)\,\overline{u_{1}}d\sigma_{x}dt\cr
&&+\displaystyle\int_{Q}\Big(|A_{1}|^{2}-|A_{2}|^{2}+q\Big)u_{2}\overline{u_{1}}\,dx\,dt.
\end{eqnarray}
This, Lemma \ref{Lm4.2} and Lemma \ref{Lm4.3}, yield
$$\Big|\int_{\Omega}\overline{\omega}.A(x)e^{-ix.\xi}\,dx\Big|\leq \frac{C_{T}}{\sigma}\Big( \|\Lambda_{A_{2},q_{2}}-
\Lambda_{A_{1},q_{1}}\|\,\|g\|_{H^{2}(\Omega)\times
H^{2,1}(\Sigma)}\|\phi\|_{L^{2}(\Sigma)\times L^{2}(\Omega)}+C+|\xi|
\Big),$$ where $\phi=(\overline{u_{1}}_{|\Sigma},\overline{u_{1}}_{t=T})$.
Here we used the fact that $ \|u_{2}\overline{u_{1}}\|_{L^{1}(Q)}\leq C_{T} $, 
for $\sigma$ sufficiently large.
 Hence, bearing in mind that
$$\|g\|_{H^{2}(\Omega)\times H^{2,1}(\Sigma)}\leq C e^{C\sigma},\,\,\,\,\,\,\,\mbox{and}\,\,\,\,\,\|\phi\|_{L^{2}(\Sigma)\times L^{2}(\Omega) }\leq C e^{C\sigma},$$
we get for $\sigma>|\xi|/2$,
\begin{equation}\label{aj4}
\Big|\int_{\Omega}\overline{\omega}\cdot A(x) e^{-ix\cdot\xi}\,dx\Big|\leq C\para{e^{C\sigma}\|\Lambda_{A_{2},q_{2}}-\Lambda_{A_{1},q_{1}}\|+\frac{1}{\sigma}
+\frac{|\xi|}{\sigma}}.
\end{equation}
Arguing as in the derivation of (\ref{aj4}), we prove by replacing
$\overline{\omega}$ by $-\omega$, that 
\begin{equation}\label{aj5}
\Big|\int_{\Omega}-\omega\cdot A(x)e^{-ix\cdot\xi}\,dx\Big|\leq C \para{e^{C\sigma}\|\Lambda_{A_{2},q_{2}}-\Lambda_{A_{1},q_{1}}\|+\frac{1}{\sigma}+\frac{|\xi|}{\sigma}}.
\end{equation}
Thus, choosing
$\omega_{\Im}=\frac{\xi_{j}e_{k}-\xi_{k}e_{j}}{|\xi_{j}e_{k}-\xi_{k}e_{j}|},$
 multiplying (\ref{aj4}) and (\ref{aj5}) by $|\xi_{j}e_{k}-\xi_{k}e_{j}|$, and adding the obtained inequalities together, we find that
$$\Big|\int_{\Omega}e^{-ix\cdot\xi}\para{\xi_{j}\tilde{a}_{k}(x)-\xi_{k}\tilde{a_{j}}(x)}\,dx|\leq C\,|\xi_{j}e_{k}-e_{j}\xi_{k}|\para{e^{C\sigma}\|\Lambda_{A_{2},q_{2}}-\Lambda_{A_{1},q_{1}}\|+\frac{1}{\sigma}+\frac{|\xi|}{\sigma} }.$$
From this and (\ref{def}) we deduce that
$$|\widehat{\sigma}_{j,k}(\xi)|\leq C <\xi>\para{e^{C\sigma}\|\Lambda_{A_{2},q_{2}}-\Lambda_{A_{1},q_{1}}\|+\frac{1}{\sigma}+\frac{|\xi|}{\sigma} },\,\,\,\,\,\,\,j,\,k\in
\mathbb{N}.$$ This ends  the proof.
\end{proof}
\subsection{Stability estimate}
Armed with Lemma \ref{Lemme3.3}, we are now in position to complete the proof
of the stability estimate for the magnetic field. To do so, we first need to
 bound the $H^{-1}(\R^{n})$ norm of
$d\alpha_{A_{1}}-d\alpha_{A_{2}}$. In light of the above reasoning , this can be achieved by taking
$\sigma>R>0$ and decomposing the $H^{-1}(\R^{n})$ norm of $\sigma_{j,k}$ as 
$$
\|\sigma_{j,k}\|^{2}_{H^{-1}(\R^{n})}=\displaystyle\int_{|\xi|\leq
R}|\widehat{\sigma}_{j,k}(\xi)|^{2}<\xi>^{-2}\,d\xi
+\displaystyle\int_{|\xi|>R}|\widehat{\sigma}_{j,k}(\xi)|^{2}
<\xi>^{-2}\,d\xi.$$
Then, we have
$$\|\sigma_{j,k}\|^{2}_{H^{-1}(\R^{n})}\leq C \Big[R^{n}\|<\xi>^{-1}\widehat{\sigma}_{j,k}\|^{2}_{L^{\infty}(B(0,R))}+\frac{1}{R^{2}}
\|\sigma_{j,k}\|_{L^{2}(\R^{n})}^{2}\Big],$$
 which entails that
$$\|\sigma_{j,k}\|^{2}_{H^{-1}(\R^{n})}\leq C\Big[R^{n}\para{e^{C\sigma}\|\Lambda_{A_{2},q_{2}}
-\Lambda_{A_{1},q_{1}}\|^{2}+\frac{1}{\sigma^{2}}+\frac{R^{2}}{\sigma^{2}}}+\frac{1}{R^{2}}\Big],
$$
 by Lemma \ref{Lemme3.3}. The next step is to choose  $R>0$ in such away
$\frac{R^{n+2}}{\sigma^{2}}=\frac{1}{R^{2}}$. In this case we get for
$\sigma>\max(\sigma_{0},|\xi|/2)$, that
\begin{eqnarray}\label{aj6}
\|\sigma_{j,k}\|^{2}_{H^{-1}(\R^{n})}&\leq& C\para{\sigma^{\frac{2n}{n+4}}e^{C\sigma}\|\Lambda_{A_{2},q_{2}}-\Lambda_{A_{1},q_{1}}\|^{2}+\sigma^{\frac{-4}{n+4}}}\cr
&\leq& C \para{e^{C_{0}\sigma}\|\Lambda_{A_{2},q_{2}}-\Lambda_{A_{1},q_{1}}\|^{2}+\frac{1}{\sigma^{\mu}}},
\end{eqnarray}
where $\mu\in (0,1)$. Thus, assuming that $\|\Lambda_{A_{2},q_{2}}-\Lambda_{A_{1},q_{1}}\|\leq c=e^{-C_{0}\max\para{\sigma_{0},|\xi|/2}}$, and taking  
$\sigma=\frac{1}{C_{0}}|\log\|\Lambda_{A_{2},q_{2}}-\Lambda_{A_{1},q_{1}}\||$ in (\ref{aj6}), we get that 
$$\|\sigma_{j,k}\|_{H^{-1}(\R^{n})}\leq C
\para{\|\Lambda_{A_{2},q_{2}}-\Lambda_{A_{1},q_{1}}\|^{1/2}+|\log\|\Lambda_{A_{2},q_{2}}-\Lambda_{A_{1},q_{1}}\||^{-\mu'}},$$
for some positive $\mu'\in(0,1)$. Since the above estimate remains true when
 $\|\Lambda_{A_{2},q_{2}}-\lambda_{A_{1},q_{1}}\|\geq c$, as we have 
$$\|\sigma_{j,k}\|_{H^{-1}(\R^{n})}\leq \frac{2 M}{c^{1/2}}c^{1/2}\leq \frac{2M}{c^{1/2}}\|\Lambda_{A_{2},q_{2}}-\Lambda_{A_{1},q_{1}}\|^{1/2}, $$
we have obtained that
$$\begin{array}{lll}
\|d\alpha_{A_{1}}-d{\alpha_{A_{2}}}\|_{H^{-1}(\Omega)}
&\leq& C\para{\|\Lambda_{A_{2},q_{2}}-\Lambda_{A_{1},q_{1}}\|^{1/2}+|\log\|\Lambda_{A_{2},q_{2}}-\Lambda_{A_{1},q_{1}}\||^{-\mu'}}.
\end{array}$$
In order to complete the proof of the theorem, we consider $\delta>0$ such
that $\alpha:=s-1=\frac{n}{2}+2\delta$, use Sobolev's embedding theorem and
we find
$$\begin{array}{lll}
\|d\alpha_{A_{1}}-d\alpha_{A_{2}}\|_{L^{\infty}(\Omega)}&\leq& C \|d\alpha_{A_{1}}-d\alpha_{A_{2}}\|_{H^{\frac{n}{2}+\delta}(\Omega)}\\
&\leq& C \|d\alpha_{A_{1}}-d\alpha_{A_{2}}\|_{H^{-1}(\Omega)}^{1-\beta}\|d\alpha_{A_{1}}-d\alpha_{A_{2}}\|_{H^{s-1}(\Omega)}^{\beta}\\
&\leq&C\para{\|\Lambda_{A_{2},q_{2}}-\Lambda_{A_{1},q_{1}}\|^{1/2}+|\log \|\Lambda_{A_{2},q_{2}}-\Lambda_{A_{1},q_{1}}\||^{-\mu}}^{1-\beta},
\end{array}$$
by interpolating with $\beta\in (0,1)$. This completes the proof of Theorem \ref{Thm1}.

This theorem is a key ingredient in the proof of the result of the next section.
\section{Stability result  for the electric potential}\label{Sec4}%
This section contains the proof of Theorem \ref{Thm2}. Using the geometric optics solutions constructed in Section\ref{Sec2}, we will prove with the aid of  the stability estimate obtained for the magnetic field,
that the time-dependent electric potential depends stably on the
Dirichlet-to-Neuamnn map $\Lambda_{A,q}$.

To do this, we should normally  apply the Hodge decomposition to
$A=A_{1}-A_{2}=A'+\nabla\varphi$ and use this estimate
\begin{equation}\label{o}
\|A'\|_{W^{1,p}(\Omega)}\leq C \|\mbox{curl} \,A'\|_{L^{p}(\Omega)}.
\end{equation}
that holds for any $p>n$ (see Appendix B).
But in this paper, since $u_{0}$ is not frozen to zero, we don't have
invariance under Gauge transformation, so will further assume that $A$
is divergence free
in such a way that the estimate (\ref{o}) holds for $A'=A$.

For a fixed  $y\in B(0,1)$, we consider solutions $u_{j}$ to the Schr\"odinger
equation of the form (\ref{Eq10}) with $\rho_{j}= \sigma\omega_{j}^{*}+y$, where
$\xi\in \R^{n}$ and $\omega\in\mathbb{S}^{n-1}$ are as in Section \ref{Sec3}, 
and $w_{j}^{*}$, $j=1,2$, are given by (\ref{Eq8}) and (\ref{Eq9}).

In contrast to Section \ref{Sec3}, $y$ is no longer equal to zero, as we need 
to estimate the Fourier transform of $q$ with
respect to $x$ and $t$.
\subsection{An identity for the electric potential }
Let us first establish the following identity for the electric
potential.
\begin{Lemm}\label{Lem3.3}
Let $u_{j}$ be the solutions given by (\ref{Eq10}) for $j=1,2$. For all  $\sigma\geq
\sigma_{0}$ and $\xi\in\R^{n}$ such that $|\xi|<2\sigma$, we have the
following identity
$$\int_{Q}q(x,t) u_{2}\overline{u_{1}}\,dx\,dt=\int_{Q}q(x,t) e^{-i(2y.\xi t+x.\xi)}\,dx\,dt+P_{1}(\xi,y,\sigma)+P_{2}(\xi,y,\sigma),$$
where  $P_{1}(\xi,y,\sigma)$ and $P_{2}(\xi,y,\sigma)$ satisfy the estimates 
$$|P_{1}(\xi,y,\sigma)|\leq C\para{\|A\|_{L^{\infty}(\Omega)}+\frac{|\xi|}{\sigma}},\,\,\,\,\,\,\,|P_{2}(\xi,y,\sigma)|\leq \frac{C}{\sigma}\,.$$
Here $\sigma_{0}$ is as in Lemma \ref{Prop3.1} and $C$ is independent of
$\sigma,\,y,$ and $\,\xi$.
\end{Lemm}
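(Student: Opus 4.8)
The plan is to proceed exactly as in the derivation of Lemma~\ref{Lm4.2}, but keeping track of the fact that $y\neq 0$. The starting point is the explicit product $u_{2}\overline{u_{1}}$ obtained from the geometrical optics ansatz (\ref{Eq10}). Since $\rho_{j}=\sigma\omega_{j}^{*}+y$ and $\rho_{j}\cdot\rho_{j}=0$, the oscillatory prefactor of $u_{2}\overline{u_{1}}$ is $e^{-i\big((\rho_{2}\cdot\rho_{2}-\overline{\rho_{1}\cdot\rho_{1}})t+x\cdot(\rho_{2}-\overline{\rho_{1}})\big)}$; the spatial frequency is $\rho_{2}-\overline{\rho_{1}}=\xi$ as before, and the new feature is that the temporal frequency $\rho_{2}\cdot\rho_{2}-\overline{\rho_{1}\cdot\rho_{1}}$ is no longer zero — a direct computation with (\ref{Eq8})--(\ref{Eq9}) and $\rho_{j}=\sigma\omega_{j}^{*}+y$ gives $\rho_{2}\cdot\rho_{2}-\overline{\rho_{1}\cdot\rho_{1}}=2y\cdot\xi$ (the $\sigma^{2}$ terms cancel since $\omega_{j}^{*}\cdot\omega_{j}^{*}=0$, and the cross terms $2\sigma\,\omega_{j}^{*}\cdot y$ combine using $\omega_{2}^{*}-\overline{\omega_{1}^{*}}=\xi/\sigma$). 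This is precisely the phase $e^{-i(2y\cdot\xi\,t+x\cdot\xi)}$ appearing in the statement.

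Next I would expand $u_{2}\overline{u_{1}}=e^{-i(2y\cdot\xi t+x\cdot\xi)}\big(e^{i\phi_{2}}+w_{2}\big)\big(e^{-i\overline{\phi_{1}}}+\overline{w_{1}}\big)$ and split the integrand into three groups: the ``main'' term $e^{i(\phi_{2}-\overline{\phi_{1}})}$, the terms linear in the correctors $w_{j}$, and the quadratic term $w_{2}\overline{w_{1}}$. Writing $\int_{Q}q\,u_{2}\overline{u_{1}}=\int_{Q}q\,e^{-i(2y\cdot\xi t+x\cdot\xi)}e^{i(\phi_{2}-\overline{\phi_{1}})}+(\text{corrector terms})$, I then define $P_{1}(\xi,y,\sigma)=\int_{Q}q\,e^{-i(2y\cdot\xi t+x\cdot\xi)}\big(e^{i(\phi_{2}-\overline{\phi_{1}})}-1\big)$ and $P_{2}(\xi,y,\sigma)$ as the sum of all the corrector contributions. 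For $P_{1}$ one uses that $|e^{i(\phi_{2}-\overline{\phi_{1}})}-1|\leq |\phi_{2}-\overline{\phi_{1}}|$ together with Lemma~\ref{Lm2.1} applied to $\phi_{j}=N^{-1}_{\omega_{j}^{*}}(-\omega_{j}^{*}\cdot A_{j})$; the point is that $\phi_{2}-\overline{\phi_{1}}$ is controlled by $\|A\|_{L^{\infty}}$ up to the mismatch between $\omega_{2}^{*}$ and $\overline{\omega_{1}^{*}}$, which is $O(|\xi|/\sigma)$ by (\ref{Eq8})--(\ref{Eq9}) and the continuous dependence of $N_{\omega}^{-1}$ on $\omega$ (as used in Lemma~\ref{Lm4.3}). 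Since $\|q\|_{\mathcal X}\leq M$ and $|Q|<\infty$, this gives $|P_{1}|\leq C(\|A\|_{L^{\infty}(\Omega)}+|\xi|/\sigma)$. For $P_{2}$ one invokes the corrector bound (\ref{Equation 2.17}), namely $\|w_{j}\|_{L^{2}(0,T;H^{1}(\Omega))}\leq C/\sigma$, so every term containing at least one factor $w_{j}$ is $O(1/\sigma)$ in $L^{1}(Q)$ after a Cauchy--Schwarz estimate, yielding $|P_{2}|\leq C/\sigma$; here boundedness of $q$ and of $e^{\pm i\phi_{j}}$ (again via Lemma~\ref{Lm2.1}) is used to absorb the non-corrector factors.

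The main obstacle, as always in this type of argument, is bookkeeping rather than any single hard estimate: one must verify carefully that the $\sigma^{2}$ and $\sigma^{1}$ contributions to the temporal phase cancel exactly so that only the $\sigma$-independent frequency $2y\cdot\xi$ survives, and one must be careful that the term $e^{i(\phi_{2}-\overline{\phi_{1}})}-1$ is split consistently between $P_{1}$ (whose bound involves $\|A\|_{L^{\infty}}$, not a power of $\sigma^{-1}$) and the remainder. A secondary subtlety is that, unlike in Section~\ref{Sec3} where $\rho_{j}=\sigma\omega_{j}^{*}$, here the extra $y$ shifts the argument of the correctors and of $\phi_{j}$; but since $y\in B(0,1)$ is fixed independently of $\sigma$, Lemma~\ref{Prop3.1} still applies verbatim and all constants remain uniform in $y$. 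Once the identity and the two estimates on $P_{1},P_{2}$ are in place, the lemma follows.
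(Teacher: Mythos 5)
Your proposal is correct and follows essentially the same route as the paper: the same expansion of $u_{2}\overline{u_{1}}$ with phase $e^{-i(2y\cdot\xi t+x\cdot\xi)}$, the same splitting into $P_{1}=\int_{Q}q\,e^{-i(2y\cdot\xi t+x\cdot\xi)}\big(e^{i(\phi_{2}-\overline{\phi_{1}})}-1\big)$ and a corrector remainder $P_{2}$, the same two-step comparison (change $A_{2}\to A_{1}$, then $\omega_{2}^{*}\to\overline{\omega_{1}^{*}}$ via continuity of $N_{\omega}^{-1}$) for $P_{1}$, and Cauchy--Schwarz with (\ref{Equation 2.17}) for $P_{2}$. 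The only cosmetic caveat is that $\phi_{2}-\overline{\phi_{1}}$ is complex, so the bound $|e^{iz}-1|\leq|z|$ should carry a factor $e^{|z|}$, harmlessly absorbed into $C$ since $\|\phi_{j}\|_{L^{\infty}}\leq C\varepsilon$.
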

\begin{proof}{}
In light of (\ref{Eq8}), (\ref{Eq9}) and (\ref{Eq10}), a direct calculation
gives us
\begin{eqnarray}\label{X}
u_{2}\overline{u_{1}}&=&e^{-i\Big((\rho_{2}.\rho_{2}-\overline{\rho_{1}.\rho_{1}})t+x.(\rho_{2}-\overline{\rho_{1}})\Big)}
\Big(e^{i(\phi_{2}-\overline{\phi_{1}})}+e^{-i\overline{\phi}_{1}}w_{2}+e^{i\phi_{2}}\overline{w_{1}}+w_{2}\overline{w}_{1}\Big)\cr
&=&e^{-i(2y.\xi t+x.\xi)}e^{-i(\overline{\phi_{1}}-\phi_{2})} +e^{-i(2y.\xi t+x.\xi)}\para{e^{-\overline{\phi_{1}}}w_{2}
+e^{i\phi_{2}}\overline{w_{1}}+w_{2}\overline{w_{1}}},
\end{eqnarray}
which yields
\begin{eqnarray}\label{Eq3.19}
\int_{Q}q(x,t)u_{2}\overline{u_{1}}\,dx\,dt=\int_{Q}q(x,t)e^{-i(2y.\xi t +x.\xi)}\,dx\,dt+P_{1}(\xi,y,\sigma)+P_{2}(\xi,y,\sigma),
\end{eqnarray}
where we have set
$$\begin{array}{lll}P_{1}(\xi,y,\sigma)&=&\displaystyle\int_{Q}q(x,t)e^{-i(2y\cdot\xi
t+x\cdot\xi)}e^{-i\overline{\phi}_{1}}\para{e^{i\phi_{2}}-e^{i\overline{\phi}_{1}}}\,dx\,dt,\\
P_{2}(\xi,y,\sigma)&=&\displaystyle\int_{Q}q(x,t)e^{(-i2y\cdot\xi t+x.\xi)}\para{e^{-i\overline{\phi}_{1}}w_{2}+e^{i\phi_{2}}\overline{w_{1}}+w_{2}\overline{w_{1}}}\,dx\,dt.
\end{array}$$
Recalling that  $\phi_{j}=N^{-1}_{\omega_{j}^{*}}(-\omega_{j}^{*}\cdot A_{j})$, for $j=1,\,2$, we deduce from the definition of $P_{1}$ that  
$$\begin{array}{lll}
|P_{1}(\xi,y,\sigma)|
&\leq& C\Big(\|e^{i N^{-1}_{\omega_{2}^{*}}(-\omega_{2}^{*}\cdot A_{2})}-e^{i N^{-1}_{\omega_{2}^{*}}(-\omega_{2}^{*}\cdot A_{1})}\|_{L^{\infty}(\Omega)}+
\|e^{i N_{\omega_{2}^{*}}^{-1}(-\omega_{2}^{*}\cdot A_{1})}-e^{iN_{\overline{\omega_{1}}^{*}}^{-1}(-\overline{\omega_{1}}^{*}\cdot A_{1})}\|_{L^{\infty}(\Omega)}\Big),
\end{array}$$
with $C>0$ is depending on $T$, $M$,  $\Omega$ and $\|A_{1}\|$. Using the
continuity of $N_{\omega}^{-1}(-\omega\cdot A)$ with respect to $\omega$ (see
Lemma  2.4 in \cite{[L]}), we get that
$$\begin{array}{lll}
|P_{1}(\xi,y,\sigma)|&\leq& C\para{ \|{N_{\omega_{2}}^{*}}^{-1}(-\omega_{2}^{*}.A_{2})-N_{\omega_{2}^{*}}^{-1}(-\omega_{2}^{*}.A_{1})\|_{L^{\infty}(\Omega)}
+|\omega^{*}_{2}-\overline{\omega}_{1}^{*}|}\\
&\leq&C\para{\|A\|_{L^{\infty}(\Omega)}+\displaystyle\frac{|\xi|}{\sigma}}.
\end{array}$$
 On the other hand, from Cauchy Schwarz
inequality, Lemma \ref{Lm2.1} and (\ref{Equation 2.17}), we get
$$\begin{array}{lll}
 |P_{2}(\xi,y,\sigma)|&\leq& C\para{\|w_{2}\|_{L^{2}(Q)}\|e^{-i\overline{\phi_{1}}}\|_{L^{2}(Q)}+\|e^{i\phi_{2}}\|_{L^{2}(Q)}\|\overline{w_{1}}\|_{L^{2}(Q)}
 +\|w_{2}\|_{L^{2}(Q)}\|\overline{w_{1}}\|_{L^{2}(\Omega)}}\\
 &\leq& \displaystyle\frac{C}{\sigma}.
 \end{array}$$
This completes the proof of Lemma \ref{Lem3.3}.
\end{proof}
\subsection{Estimate of the Fourier transform}
In view of relating the Fourier transform of the electric potential
$q=q_{1}-q_{2}$ to  $\Lambda_{A_{1},q_{1}}-\Lambda_{A_{2},q_{2}}$, we first
establish the following auxiliary result
\begin{Lemm}\label{Lm3.4}
For any $\sigma \geq \sigma_{0}$ and $\xi\in \R^{n}$ such that
$|\xi|<2\sigma$, we have the following estimate 
$$
|\widehat{q}(\xi,2y.\xi)|\leq C\Big(  e^{C\sigma}\|\Lambda_{A_{2},q_{2}}-\Lambda_{A_{1},q_{1}}\|+e^{C\sigma}\|d{\alpha_{A_{1}}}-d{\alpha_{A_{2}}}\|_{L^{\infty}(\Omega)}+\frac{|\xi|}{\sigma} +\frac{1}{\sigma} \Big),
$$
for some $C$ that is independent of $|\xi|$ and $\sigma.$
\end{Lemm}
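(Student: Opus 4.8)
The approach is to combine a Green-type integral identity relating the geometric optics solutions to the difference $\Lambda_{A_2,q_2}-\Lambda_{A_1,q_1}$ with Lemma \ref{Lem3.3} and the control on the magnetic field already furnished by Theorem \ref{Thm1}. First I would use Lemma \ref{Prop3.1} twice to build, for $\sigma\geq\sigma_0$, geometric optics solutions $u_2$ of $(i\p_t+\Delta_{A_2}+q_2)u_2=0$ and $u_1$ of $(i\p_t+\Delta_{A_1}+q_1)u_1=0$ of the form (\ref{Eq10}) with $\rho_j=\sigma\omega_j^*+y$, $y\in B(0,1)$ fixed and $\omega_j^*$ given by (\ref{Eq8})--(\ref{Eq9}). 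Letting $v$ solve the boundary value problem associated with $A_1,q_1$ and the data $g=(u_{2|t=0},u_{2|\Sigma})$, the function $u=v-u_2$ has vanishing initial and lateral data and solves $(i\p_t+\Delta_{A_1}+q_1)u=2iA\cdot\nabla u_2+h\,u_2$ in $Q$, with $A=A_1-A_2$ and $h=i\,\mbox{div}\,A-(|A_1|^2-|A_2|^2)+q$. Multiplying by $\overline{u_1}$, integrating over $Q$ and using the Green formula (\ref{green}) together with the equation for $u_1$, one obtains exactly as in (\ref{ap}) that
$$\int_Q\big(2iA\cdot\nabla u_2+h\,u_2\big)\overline{u_1}\,dx\,dt=i\int_\Omega u(\cdot,T)\overline{u_1}(\cdot,T)\,dx-\int_\Sigma(\p_\nu+iA_1\cdot\nu)u\,\overline{u_1}\,d\sigma_x\,dt.$$
Since $A_1=A_2$ on $\Gamma$, we have $u(\cdot,T)=(\Lambda^1_{A_1,q_1}-\Lambda^1_{A_2,q_2})(g)$ and $(\p_\nu+iA_1\cdot\nu)u=(\Lambda^2_{A_1,q_1}-\Lambda^2_{A_2,q_2})(g)$, and the divergence-free hypothesis removes the term $i\,\mbox{div}\,A\,u_2\overline{u_1}$; solving for $\int_Q q\,u_2\overline{u_1}$ leaves a boundary/final-time term involving $\Lambda_{A_2,q_2}-\Lambda_{A_1,q_1}$, the term $-\int_Q 2iA\cdot\nabla u_2\overline{u_1}$, and the term $\int_Q(|A_1|^2-|A_2|^2)u_2\overline{u_1}$.

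Next I would estimate each contribution. For the boundary/final-time term, $\|(\Lambda_{A_2,q_2}-\Lambda_{A_1,q_1})(g)\|\leq\|\Lambda_{A_2,q_2}-\Lambda_{A_1,q_1}\|\,\|g\|_{H^2(\Omega)\times H^{2,1}(\Sigma)}$ combined with the a priori bounds $\|g\|_{H^2(\Omega)\times H^{2,1}(\Sigma)}\leq Ce^{C\sigma}$ and $\|(\overline{u_1}_{|\Sigma},\overline{u_1}(\cdot,T))\|_{L^2(\Sigma)\times L^2(\Omega)}\leq Ce^{C\sigma}$ --- consequences of the explicit form (\ref{Eq10}), the bound (\ref{Equation 2.17}) on $w_1$, and the fact that the exponential prefactors are $\leq e^{C\sigma}$ on the bounded sets $\Omega$ and $\Sigma$ --- yields a contribution $\leq Ce^{C\sigma}\|\Lambda_{A_2,q_2}-\Lambda_{A_1,q_1}\|$. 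For the remaining two terms the key observation, already used in (\ref{X}), is that in the product $u_2\overline{u_1}$ the growing exponentials cancel, since $\rho_2-\overline{\rho_1}=\xi$ and $\rho_2\cdot\rho_2-\overline{\rho_1\cdot\rho_1}=2y\cdot\xi$ are real; hence $\|u_2\overline{u_1}\|_{L^1(Q)}\leq C$ uniformly in $\sigma$, so that $|\int_Q(|A_1|^2-|A_2|^2)u_2\overline{u_1}|\leq C\|A\|_{L^\infty(\Omega)}$, and writing $\nabla u_2=-i\rho_2 u_2+e^{-i((\rho_2\cdot\rho_2)t+x\cdot\rho_2)}\big(i\nabla\phi_2\,e^{i\phi_2}+\nabla w_2\big)$ and using Lemma \ref{Lm2.1} and (\ref{Equation 2.17}) for the lower-order piece, $|\int_Q 2iA\cdot\nabla u_2\overline{u_1}|\leq C|\rho_2|\,\|A\|_{L^\infty(\Omega)}+C\|A\|_{L^\infty(\Omega)}\leq C\sigma\|A\|_{L^\infty(\Omega)}$ since $|\rho_2|\leq C\sigma$. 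Invoking now $\mbox{div}\,A=0$, $A_{|\Gamma}=0$, the estimate (\ref{o}) with $A'=A$, the Sobolev embedding $W^{1,p}(\Omega)\hookrightarrow L^\infty(\Omega)$ for $p>n$, and the boundedness of $\Omega$, one gets $\|A\|_{L^\infty(\Omega)}\leq C\|d{\alpha_{A_1}}-d{\alpha_{A_2}}\|_{L^\infty(\Omega)}$, and since $\sigma\leq e^{C\sigma}$ for $\sigma\geq\sigma_0$ the $A\cdot\nabla u_2$ term is $\leq Ce^{C\sigma}\|d{\alpha_{A_1}}-d{\alpha_{A_2}}\|_{L^\infty(\Omega)}$. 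Finally, applying Lemma \ref{Lem3.3} to replace $\int_Q q\,u_2\overline{u_1}$ by $\widehat{q}(\xi,2y\cdot\xi)+P_1+P_2$ (with $q$ extended by zero outside $Q$) and inserting the stated bounds $|P_1|\leq C(\|A\|_{L^\infty(\Omega)}+|\xi|/\sigma)$ and $|P_2|\leq C/\sigma$ produces the claimed inequality.

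\textbf{The main obstacle} is the term $\int_Q 2iA\cdot\nabla u_2\overline{u_1}$: since $\nabla u_2$ generates a factor $\rho_2$ of size $\sigma$, this term is only $O(\sigma\|A\|_{L^\infty(\Omega)})$ and does not tend to $0$ as $\sigma\to\infty$ unless $\|A\|_{L^\infty(\Omega)}$ is itself small. This is precisely the reason for imposing $\mbox{div}\,A=0$: no Hodge decomposition is available here because $u_0$ is not frozen to zero and gauge invariance fails, but the divergence-free condition makes (\ref{o}) applicable to $A$ itself, so that $\|A\|_{L^\infty(\Omega)}$ is dominated by $\|d{\alpha_{A_1}}-d{\alpha_{A_2}}\|_{L^\infty(\Omega)}$ via Theorem \ref{Thm1}, after which the spurious factor $\sigma$ is harmless once absorbed into $e^{C\sigma}$. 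A secondary point needing care is the bookkeeping of the exponential prefactors of $u_1$ and $u_2$, which cancel in every product tested against a potential (giving uniform-in-$\sigma$ bounds) but genuinely survive --- as $e^{C\sigma}$ --- in the final-time and boundary terms.
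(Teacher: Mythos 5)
Your proposal is correct and follows essentially the same route as the paper: the same construction of $u_2$, $v$, $u=v-u_2$ and $u_1$, the same Green-formula identity, Lemma \ref{Lem3.3} for the principal term, the $e^{C\sigma}$ bounds on $g$ and the traces of $\overline{u_1}$, and the divergence-free curl estimate to replace $\|A\|_{L^{\infty}(\Omega)}$ by $\|d\alpha_{A_{1}}-d\alpha_{A_{2}}\|_{L^{\infty}(\Omega)}$ (the paper writes the magnetic term in the antisymmetrized form $iA\cdot(\overline{u_1}\nabla u_2-u_2\nabla\overline{u_1})$ before bounding it by $C\sigma\|A\|_{L^\infty(\Omega)}$, which is equivalent to your direct estimate). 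Only a cosmetic slip: the bound $\|A\|_{L^{\infty}(\Omega)}\leq C\|d\alpha_{A_{1}}-d\alpha_{A_{2}}\|_{L^{\infty}(\Omega)}$ comes from Lemma \ref{rot} (plus Sobolev embedding), not from Theorem \ref{Thm1}, which only enters later in the proof of Theorem \ref{Thm2}.
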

\begin{proof}{}
First, for $\sigma>\sigma_{0}$, Lemma \ref{Prop3.1} guarantees the existence
of a geometrical optics solution $u_{2}$ of the form
$$u_{2}(x,t)=e^{-i\para{(\rho_{2}.\rho_{2})t+x.\rho_{2}}}(e^{i\phi_{2}(x)}+w_{2}(x,t)),$$
to the magnetic Schr\"odinger equation
\begin{equation}
\left\{
  \begin{array}{ll}
  (i\p_{t}+\Delta_{A_{2}}+q_{2}(x,t))u_{2}(x,t)=0, &\mbox{in}\,Q,\\
  u_{2}(x,0)=u_{0}, &\mbox{in}\,\,\Omega,
   \end{array}
\right. 
\end{equation}
where $\rho_{2}$ is given by (\ref{Eq9}) and $w_{2}(x,t)$ satisfies
\begin{equation}\label{Eq3.28}
\sigma\|w_{2}\|_{H^{2}(0,T,H^{1}(\Omega))}+\|w_{2}\|_{L^{2}(0,T,H^{2}(\Omega))}\leq C
\end{equation}
Let us  denote by $f_{\sigma}:=u_{2|\Sigma}$. We  consider a solution $v$ to
the following non homogeneous boundary value problem
\begin{equation}\label{u}
\left\{
  \begin{array}{ll}
    (i\p_{t}+\Delta_{A_{1}}+q_{1}(x,t))v=0, & \mbox{in}\,\,Q, \\
    v(.,0)=u_{2}(.,0)=u_{0}, & \mbox{in}\,\,\Omega, \\
    v=u_{2}=f_{\sigma}, & \mbox{on}\,\Sigma.
  \end{array}
\right.
\end{equation}
Denote $u=v-u_{2}$, then $u$ is a solution to the following homogenous
boundary value problem for the magnetic Schr\"odinger equation
$$
\left\{
  \begin{array}{ll}
    (i\p_{t}+\Delta_{A_{1}}+q_{1}(x,t))u=2iA\cdot \nabla u_{2}+h(x,t) u_{2}, & \mbox{in}\,Q, \\
    u(x,0)=0, & \hbox{in}\, \Omega,\\
    u(x,t)=0, & \mbox{on}\,\Sigma,
  \end{array}
\right.
$$
where we recall that
$$A=A_{1}-A_{2},\quad q=q_{1}-q_{2}\quad \mbox{and}\quad h=i \,\mbox{div}
A-(|A_{1}|^{2}-|A_{2}|^{2})+q.$$
On the other hand, we consider a solution $u_{1}$ of the magnetic
Shr\"odinger equation (\ref{Eq6}) corresponding to the potentials $A_{1}$ and
$q_{1}$, of the form
$$u_{1}(x,t)=e^{-i\para{(\rho_{1}.\rho_{1})t+x.\rho_{1}}}(e^{i\phi_{1}(x)}+w_{1}(x,t)),$$
where $\rho_{1}$ is given by (\ref{Eq8}) and  $w_{1}(x,t)$ satisfies
\begin{equation}\label{eq3.29}
\sigma\|w_{1}\|_{H^{2}(0,T,H^{1}(\Omega))}+\|w_{1}\|_{L^{2}(0,T,H^{2}(\Omega))}\leq C.
\end{equation}
 Integrating by parts and  using the Green Formula (\ref{green}), we get
$$\begin{array}{lll}
\displaystyle\int_{Q}q(x,t) u_{2}\overline{u_{1}}\,dx\,dt&=&i\displaystyle\int_{\Omega}(\Lambda_{A_{2},q_{2}}^{1}-\Lambda_{A_{1},q_{1}}^{1})(g)\overline{u_{1}}(.,T)\,dx
-\displaystyle\int_{\Sigma}(\Lambda_{A_{2},q_{2}}^{1}-\Lambda_{A_{1},q_{1}}^{2})(g)\overline{u_{1}}\,d\sigma_{x}\,dt\\
&&+\displaystyle\int_{Q}i A(x)\cdot (\overline{u_{1}}\nabla u_{2}-u_{2}\nabla \overline{u_{1}})\,dx\,dt
-\int_{Q}(|A_{1}|^{2}-|A_{2}|^{2})u_{2}\overline{u_{1}}\,dx\,dt,
\end{array}
$$
where $g=(u_{2|t=0},u_{2|\Sigma}).$  To bring the Fourier transform of
$q$ out of the above identity, we  extend $q$ by zero outside the cylindrical domain $Q$, we use Lemma
\ref{Lem3.3} and take to account that
$$\|u_{2}\overline{u_{1}}\|_{L^{1}(Q)}\leq C,\,\,\,\mbox{and}\,\,\,\,\,\|\overline{u_{1}}\nabla u_{2}\|_{L^{1}(Q)}+\|u_{2}\nabla \overline{u_{1}}\|_{L^{1}(Q)}\leq C\sigma,$$
and  get
$$\begin{array}{lll}
|\widehat{q}(\xi,2y\cdot\xi)|\leq C \Big(\|\Lambda_{A_{2},q_{2}}-\Lambda_{A_{1},q_{1}}\|\|g\|_{H^{2}(\Omega)\times H^{2,1}(\Sigma)}
\|\phi\|_{L^{2}(\Sigma)\times L^{2}(\Omega)}+C\sigma\|A\|_{L^{\infty}(\Omega)}+\displaystyle\frac{|\xi|}{\sigma}+\displaystyle\frac{1}{\sigma}\Big),
\end{array}$$
where $\phi=(\overline{u_{1}}_{|\Sigma},\,\overline{u_{1}}_{|t=T})$.
 Now, bearing in mind that
$$\|g\|_{H^{2}(\Omega)\times H^{2,1}(\Sigma)}\leq C e^{C\sigma},\,\,\,\,\,\,\,\mbox{and}\,\,\,\,\,\|\phi\|_{L^{2}(\Sigma)\times L^{2}(\Omega)}\leq C e^{C\sigma},$$
we get for all $\xi\in\R^{n}$ such that $|\xi|<2\sigma$ and for all $y\in
B(0,1)$,
\begin{equation}
|\widehat{q}(\xi,2y.\xi)|\leq C\Big(  e^{C\sigma}\|\Lambda_{A_{2},q_{2}}-\Lambda_{A_{1},q_{1}}\|+e^{C\sigma}\|A\|_{L^{\infty}(\Omega)}+\frac{|\xi|}{\sigma} +\frac{1}{\sigma} \Big).
\end{equation}
Finally, using the fact that $\|A\|_{W^{1,\infty}(\Omega)}\leq C 
\|\mbox{curl}\,A\|_{L^{\infty}(\Omega)},$  (see Lemma \ref{rot} in Appendix B), we obtain the desired result.
\end{proof}
We are now in position to estimate $\widehat{q}(\xi,\tau)$ for all
$(\xi,\tau)$ in the following set
$$E_{\alpha}=\{(\xi,\tau)\in (\R^{n}\setminus\{0\})\times \R,\,\,|\xi|<2\alpha,\,\,\,|\tau|<2|\xi|\},$$
for any fixed $0<\alpha<\sigma$.
\begin{Lemm}\label{Lm3.5}
Suppose that the conditions of Lemma \ref{Lm3.4} are satisfied. Then we have
for all $(\xi,\tau)\in E_{\alpha}$,
\begin{equation}\label{ball}
|\widehat{q}(\xi,\tau)|\leq C\Big(
e^{C\sigma}\|\Lambda_{A_{2},q_{2}}-\Lambda_{A_{1},q_{1}}\|
+e^{C\sigma}\|d{\alpha_{A_{1}}}-d{\alpha_{A_{2}}}\|_{L^{\infty}(\Omega)}+\frac{\alpha}{\sigma}
+\frac{1}{\sigma} \Big).
\end{equation}
 Here $C$ is independent of $|\xi|$ and $\sigma$.
\end{Lemm}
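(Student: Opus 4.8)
The plan is to obtain (\ref{ball}) from Lemma~\ref{Lm3.4} by choosing the free parameter $y\in B(0,1)$ so that the ``slice frequency'' $2y\cdot\xi$ realizes the prescribed value $\tau$. Recall that throughout Section~\ref{Sec4} the vector $y$ ranges over the unit ball and that Lemma~\ref{Lm3.4} controls $|\widehat{q}(\xi,2y\cdot\xi)|$ for every such $y$, with a constant $C$ independent of $|\xi|$, $\sigma$ and $y$. Thus it suffices to express an arbitrary $(\xi,\tau)\in E_\alpha$ in the form $(\xi,2y\cdot\xi)$ with $y\in B(0,1)$.

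Fix $(\xi,\tau)\in E_\alpha$, so that $\xi\neq 0$, $|\xi|<2\alpha$ and $|\tau|<2|\xi|$. I would set
$$y=\frac{\tau}{2|\xi|^{2}}\,\xi .$$
Then $2y\cdot\xi=\tau$, while $|y|=\dfrac{|\tau|}{2|\xi|}<1$ by the constraint defining $E_\alpha$, so $y\in B(0,1)$; moreover $|\xi|<2\alpha<2\sigma$, so the hypotheses of Lemma~\ref{Lm3.4} hold for this $\xi$, $\sigma$ and $y$. Applying that lemma gives
$$|\widehat{q}(\xi,\tau)|=|\widehat{q}(\xi,2y\cdot\xi)|\leq C\Big(e^{C\sigma}\|\Lambda_{A_{2},q_{2}}-\Lambda_{A_{1},q_{1}}\|+e^{C\sigma}\|d\alpha_{A_{1}}-d\alpha_{A_{2}}\|_{L^{\infty}(\Omega)}+\frac{|\xi|}{\sigma}+\frac{1}{\sigma}\Big).$$
Since $|\xi|<2\alpha$ on $E_\alpha$, the term $|\xi|/\sigma$ is dominated by $2\alpha/\sigma$, and absorbing the factor $2$ into $C$ produces exactly (\ref{ball}). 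Because the constant in Lemma~\ref{Lm3.4} depends on none of $|\xi|$, $y$, the resulting bound is uniform over $E_\alpha$, as claimed.

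I do not expect any genuine obstacle in this step: the single thing to verify is that the point $y$ defined above lies in the \emph{open} unit ball, which is precisely what the condition $|\tau|<2|\xi|$ in the definition of $E_\alpha$ guarantees. All the analytic content has already been spent in Lemmas~\ref{Lem3.3}--\ref{Lm3.4}; the present lemma is merely the bookkeeping step upgrading the one-parameter family of ``slice'' estimates into a bound on the full space--time Fourier transform of $q$ over the cone $E_\alpha$, which will then be integrated (after optimizing in $\sigma$ and $\alpha$, and treating the high-frequency regime separately) to estimate $\|q\|_{H^{-1}(Q)}$.
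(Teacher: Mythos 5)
Your proof is correct and follows essentially the same route as the paper's: the same choice $y=\frac{\tau}{2|\xi|^{2}}\,\xi$, the same observation that $|\tau|<2|\xi|$ places $y$ in $B(0,1)$ while $|\xi|<2\alpha<2\sigma$ validates the hypotheses of Lemma~\ref{Lm3.4}, and the same absorption of $|\xi|/\sigma\leq 2\alpha/\sigma$ into the constant. Your write-up is in fact slightly more explicit than the paper's about why $y$ lies in the open unit ball.
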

\begin{proof}{}
Fix $(\xi,\tau)\in E_{\alpha},$ and set  $y=\frac{\tau}{2|\xi|^{2}}\cdot
\xi,$ in such away  that $y\in B(0,1)$ and $2y\cdot\xi=\tau$. Since
$\alpha<\sigma$ we have $|\xi|<2\alpha<2\sigma$. Hence,
  Lemma \ref{Lm3.4} yields the desired result.
\end{proof}
\subsection{Stability estimate}
In order to complete the proof of the stability estimate for
the electric potential, we use an argument for analytic functions
proved in \cite{[new]} (see also \cite{[A],[V]}). For $\gamma\in
\mathbb{N}^{n+1}$, we put $|\gamma|=\gamma_{1}+...+\gamma_{n+1}.$ We have the
following statement that claims conditional stability for the analytic continuation.
\begin{Lemm}\label{Lm3.6}
Let $O$ be a non empty open set of $B(0,1)$ and let $F$ be an analytic
function in $B(0,2)$, obeying
$$\|\p^{\gamma}F\|_{L^{\infty}(B(0,2))}\leq \frac{M |\gamma|}{\eta^{|\gamma|}},\,\,\,\,\,\,\forall \gamma\in \mathbb{N}^{n+1}$$
for some $M>0$ and $\eta>0$. Then we have
$$\|F\|_{L^{\infty}(B(0,1))}\leq (2M)^{1-\mu}\|F\|^{\mu}_{L^{\infty}(O)},$$
where $\mu\in(0,1)$ depends on $n$, $\eta$ and $|O|$.
\end{Lemm}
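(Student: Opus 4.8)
This is a quantitative analytic-continuation (stability of unique continuation) estimate, and I would run the classical three-step argument: complexify $F$ by means of the Cauchy-type bounds on its derivatives, prove a three-ball (propagation-of-smallness) inequality, and iterate it along a chain of overlapping balls joining $O$ to an arbitrary point of $B(0,1)$. Note first that taking $\gamma=0$ in the hypothesis gives $\|F\|_{L^{\infty}(B(0,2))}\le M$, so in particular $\|F\|_{L^{\infty}(O)}\le M\le 2M$ and the right-hand side of the asserted inequality is meaningful. Put $\delta:=\eta/(2(n+1))$. The bounds on $\p^{\gamma}F$ imply that the Taylor series of $F$ centred at any real point $a\in B(0,3/2)$ converges on the complex ball $\{\zeta\in\C^{n+1}:|\zeta-a|<\delta\}$ and is bounded there by $2M$; since two holomorphic functions agreeing on a real neighbourhood coincide, these local pieces patch together into a single holomorphic extension $\widetilde F$ of $F$ to the complex $\delta$-neighbourhood of $B(0,3/2)$, with $\|\widetilde F\|_{L^{\infty}}\le 2M$ there.

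\textbf{Three-ball inequality.} The core estimate is: there is an absolute constant $\theta\in(0,1)$ such that, for every real $a$ with $B(a,\delta)\subset B(0,3/2)$,
\[
\|F\|_{L^{\infty}(B(a,\delta/2))}\ \le\ (2M)^{1-\theta}\,\|F\|_{L^{\infty}(B(a,\delta/4))}^{\theta}.
\]
To prove it, fix a real point $p$ with $\delta/4<|p-a|\le\delta/2$ (the case $|p-a|\le\delta/4$ being trivial), set $v=(p-a)/|p-a|$, and look at $h(\zeta)=\widetilde F(a+\zeta v)$: this is holomorphic on the disc $\{|\zeta|<\delta\}$, satisfies $|h|\le 2M$ there, and satisfies $|h(\zeta)|\le\varepsilon:=\|F\|_{L^{\infty}(B(a,\delta/4))}$ for real $\zeta$ with $|\zeta|\le\delta/4$. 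Since $\log|h|$ is subharmonic, the two-constants (harmonic-measure) theorem on the slit disc $\{|\zeta|<\delta\}\setminus[-\delta/4,\delta/4]$ gives $\log|h(\zeta_{0})|\le\omega(\zeta_{0})\log\varepsilon+(1-\omega(\zeta_{0}))\log(2M)$, where $\omega(\zeta_{0})$ is the harmonic measure of the slit. By scale invariance, $\omega(\zeta_{0})\ge\theta$ for a fixed absolute constant $\theta>0$ whenever $\delta/4<|\zeta_{0}|\le\delta/2$; since $\varepsilon\le 2M$, evaluating at $\zeta_{0}=|p-a|$ yields $|F(p)|=|h(|p-a|)|\le\varepsilon^{\theta}(2M)^{1-\theta}$, and taking the supremum over $p$ proves the inequality. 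Reconciling the \emph{complex} supremum that controls $h$ on the disc with the \emph{real} supremum over $B(a,\delta/4)$ — that is, this two-constants step — is the only genuinely delicate point; everything else is bookkeeping.

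\textbf{Chain argument and conclusion.} Choose a ball $B(x_{*},\rho_{0})\subset O$ and, replacing $\delta$ by $\min\{\delta,4\rho_{0}\}$ if necessary, assume $B(x_{*},\delta/4)\subset O$. Given any $p\in B(0,1)$, pick points $x_{0}=x_{*},x_{1},\dots,x_{N}=p$ on the segment $[x_{*},p]\subset B(0,1)\subset B(0,3/2)$ with $|x_{i+1}-x_{i}|\le\delta/4$ and $N\le 1+8/\delta$; then $B(x_{i+1},\delta/4)\subset B(x_{i},\delta/2)$, so with $\varepsilon_{i}:=\|F\|_{L^{\infty}(B(x_{i},\delta/4))}$ the three-ball inequality gives $\varepsilon_{i+1}\le(2M)^{1-\theta}\varepsilon_{i}^{\theta}$. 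Iterating from $\varepsilon_{0}\le\|F\|_{L^{\infty}(O)}$ gives $|F(p)|\le\varepsilon_{N}\le(2M)^{1-\theta^{N}}\|F\|_{L^{\infty}(O)}^{\theta^{N}}$, and taking the supremum over $p\in B(0,1)$ and setting $\mu:=\theta^{N}\in(0,1)$ yields the claimed estimate, with $\mu$ depending only on $n$, $\eta$ and the interior geometry of $O$. To get the dependence on the Lebesgue measure $|O|$ alone — the form recorded in the statement — one precedes the chain with a Remez-type inequality for real-analytic functions, bounding $\|F\|_{L^{\infty}(B(x_{*},\rho_{0}))}$ by a positive power of $\|F\|_{L^{\infty}(O)}$ times a power of $M$ with $\rho_{0}$ and the exponent governed by $|O|$, and then proceeds as above. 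I expect the three-ball inequality (and, for the sharp $|O|$-dependence, that Remez-type input) to be the main obstacle; granted it, the conclusion is the elementary iteration just described.
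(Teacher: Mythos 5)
The paper does not actually prove Lemma \ref{Lm3.6}: it is quoted from the references \cite{[new]}, \cite{[A]}, \cite{[V]} (Bellassoued--Ben A\"{\i}cha, Apraiz--Escauriaza, Vessella), with Lavrent'ev cited for the classical background. So your proposal cannot be compared with an in-paper argument; judged on its own, its core is sound and is in fact the standard route behind those references. The holomorphic extension step is correct (reading the hypothesis, as the application in Section \ref{Sec4} makes clear, with $|\gamma|!$ rather than $|\gamma|$: with $\delta=\eta/(2(n+1))$ the multinomial sum indeed gives the bound $2M$ on the complex $\delta$-neighbourhood), the two-constants/harmonic-measure estimate on the slit disc gives the three-ball inequality you state, and the chain along a segment propagates the smallness from an inscribed ball $B(x_{*},\rho_{0})\subset O$ to all of $B(0,1)$, yielding $\|F\|_{L^{\infty}(B(0,1))}\leq (2M)^{1-\mu}\|F\|_{L^{\infty}(O)}^{\mu}$ with $\mu=\theta^{N}$. (Minor bookkeeping: also cap $\delta$ by an absolute constant so that every ball $B(x_{i},\delta)$ used in the chain stays inside $B(0,3/2)$.)

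The one genuine shortfall is the stated dependence of $\mu$ on the Lebesgue measure $|O|$. Your chain argument produces $\mu$ depending on $n$, $\eta$ and the radius $\rho_{0}$ of a ball contained in $O$, and an open set of given measure can have arbitrarily small inscribed balls, so the measure-dependence does not follow from what you proved; you delegate it to a ``Remez-type inequality for real-analytic functions'', which is precisely the nontrivial content of the cited results of Apraiz--Escauriaza and Vessella, not a routine add-on. So as a proof of the lemma exactly as stated, that ingredient would have to be supplied (or cited, as the paper does). For the way the lemma is actually used in Section \ref{Sec4}, where $O=E_{1}\cap B(0,1)$ is a fixed open set and only the existence of some $\mu\in(0,1)$ matters, your inscribed-ball version is already sufficient.
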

 We refer to Lavrent'ev \cite{[Lav]} for classical results for this type.  For
fixed $0<\alpha<\sigma$, let us set
$$F_{\alpha}(\xi,\tau)=\widehat{q}(\alpha(\xi,\tau)),\,\,\,\,\,\,\,\,(\xi,\tau)\in\R^{n+1}.$$
It is easily seen that $F_{\alpha}$ is analytic and that 
$$\begin{array}{lll}
|\p^{\gamma}F_{\alpha}(\xi,\tau)|=|\p^{\gamma}\widehat{q}(\alpha(\xi,\tau))|&=&\Big|\p^{\gamma}\displaystyle\int_{\R^{n+1}}q(x,t) e^{-\alpha (x,t).(\tau,\xi)}\,dx\,dt\Big|\\
&=&\Big|  \displaystyle\int_{\R^{n+1}}q(x,t)(-i)^{|\gamma|}\alpha^{|\gamma|}(x,t)^{\gamma}e^{-i\alpha (x,t).(\xi,\tau)}\,dx\,dt\Big|.
\end{array}$$
Hence one gets
$$ |\p^{\gamma}F_{\alpha}(\xi,\tau)|\leq \displaystyle\int_{\R^{n+1}}|q(x,t)|\alpha^{|\gamma|}(|x|^{2}+t^{2})^{\frac{|\gamma|}{2}}\,dx\,dt\leq \|q\|_{L^{1}(Q)}\alpha^{|\gamma|}(2T^{2})^{\frac{|\gamma|}{2}}\leq C\frac{|\gamma|!}{(T^{-1})^{|\gamma|}}e^{\alpha}.$$
Applying Lemma \ref{Lm3.6} on the set $O=E_{1}\cap B(0,1)$ with $M=C
e^{\alpha}$, $\eta=T^{-1}$, we may find a constant $\mu\in (0,1)$ such that
we have
$$|F_{\alpha}(\xi,\tau)|=|\widehat{q}(\alpha(\xi,\tau))|\leq Ce^{\alpha(1-\mu)}\|F_{\alpha}\|^{\mu}_{L^{\infty}(O)},\,\,\,\,\,\,(\xi,\tau)\in B(0,1).$$
Now the idea is to estimate the Fourier transform of $q$ in a
suitable ball. Bearing in mind that $\alpha E_{1}=E_{\alpha}$, we have for
all $(\xi,\tau)\in B(0,\alpha)$,
\begin{eqnarray}\label{3.25}
|\widehat{q}(\xi,\tau)|=|F_{\alpha}(\alpha^{-1}(\xi,\tau)|&\leq& Ce^{\alpha(1-\mu)}\|F_{\alpha}\|^{\mu}_{L^{\infty}(O)}\cr
&\leq& Ce^{\alpha(1-\mu)}\|\widehat{q}\|^{\mu}_{L^{\infty}(B(0,\alpha)\cap E_{\alpha})}\cr
&\leq& Ce^{\alpha (1-\mu)}\|\widehat{q}\|^{\mu}_{L^{\infty}(E_{\alpha})}.
\end{eqnarray}
The next step of the proof  is to get an estimate linking the 
coefficient $q$ to the measurement
$\Lambda_{A_{1},q_{1}}-\Lambda_{A_{2},q_{2}}$. To do that  we first
decompose the $H^{-1}(\R^{n+1})$ norm of $q$  as follows
$$\begin{array}{lll}
\|q\|_{H^{-1}(\R^{n+1})}^{\frac{2}{\mu}}\!\!&=&\!\!\Big(  \displaystyle\int_{|(\xi,\tau)|<\alpha}\!\!\!\!\!\!<(\xi,\tau)>^{-2}|\widehat{q}(\xi,\tau)|^{2}\,d\xi \,d\tau
+\displaystyle\int_{|(\xi,\tau)|\geq \alpha}\!\!\!\! \!\! <(\xi,\tau)>^{-2}|\widehat{q}(\xi,\tau)|^{2}\,d\tau\,d\xi\Big)^{\frac{1}{\mu}}\\
&\leq&C\para{\alpha^{n+1}\|\widehat{q}\|^{2}_{L^{\infty}(B(0,\alpha))}+\alpha^{-2}\|q\|_{L^{2}(\R^{n+1})}^{2}}^{\frac{1}{\mu}}.
\end{array}$$
It follows from (\ref{3.25}) and Lemma \ref{Lm3.5}, that
\begin{equation}
\|q\|_{H^{-1}(\R^{n+1})}^{\frac{2}{\mu}}\leq C\Big[\alpha^{\frac{n+1}{\mu}}e^{\frac{2\alpha (1-\mu)}{\mu}}\Big(e^{C\sigma}\eta^{2}+
e^{C\sigma}\|d\alpha_{A_{1}}-d\alpha_{A_{2}}\|^{2}_{L^{\infty}(\Omega)}+\frac{\alpha^{2}}{\sigma^{2}}+\frac{1}{\sigma^{2}}
\Big)+\frac{1}{\alpha^{\frac{2}{\mu}}}   \Big],
\end{equation}
where we have set $\eta=\|\Lambda_{A_{2},q_{2}}-\Lambda_{A_{1},q_{1}}\|$. In
light of  Theorem \ref{Thm1}, one gets
\begin{equation}\label{eq4.47}\|q\|_{H^{-1}(\R^{n+1})}^{\frac{2}{\mu}}\leq C\Big[
\alpha^{\frac{n+1}{\mu}}
e^{\frac{2\alpha(1-\mu)}{\mu}}\Big(e^{C\sigma}\eta^{2}+e^{c\sigma}\eta^{s}+e^{C\sigma}
|\log \eta|^{-2\mu
s}+\frac{\alpha^{2}}{\sigma^{2}}+\frac{1}{\sigma^{2}}\Big)+\frac{1}{\alpha^{\frac{2}{\mu}}}
\Big].
\end{equation}
 The above statements are valid provided $\sigma$ is sufficiently large. Then, we choose
$\alpha$ so large  that 
$\sigma=\alpha^{\frac{2\mu+n+3}{2\mu}}e^{\frac{\alpha(1-\mu)}{\mu}},$ and 
 hence $\alpha^{\frac{2\mu+n+1}{\mu}}e^{\frac{2\alpha
(1-\mu)}{\mu}}\sigma^{-2}=\alpha^{\frac{-2}{\mu}}$, so the  estimate
(\ref{eq4.47}) yields
\begin{equation}\label{eq4.48}\|q\|_{H^{-1}(\R^{n+1})}^{\frac{2}{\mu}}\leq C
\Big[ e^{Ce^{N\alpha}}(\eta^{2}+\eta^{s}+|\log \eta|^{-2\mu
s})+\alpha^{\frac{-2}{\mu}}\Big],
\end{equation} where $N$ depends on $\mu$ and $n$. Thus, if
$\eta\in (0,1)$, we have
\begin{equation}\label{eq4.49}\|q\|_{H^{-1}(\R^{n+1})}^{\frac{2}{\mu}}\leq
C\Big(e^{Ce^{N\alpha}}|\log \eta |^{-2\mu s} +\alpha^{\frac{-2}{\mu}}
\Big).
\end{equation}
 Finally, if $\eta$ is small enough, taking 
$\alpha=\frac{1}{N}\log \Big(\log|\log \eta|^{\frac{\mu s}{C}}\Big),$ 
we get from  (\ref{eq4.49}) that
$$\|q\|_{H^{-1}(\R^{n+1})}^{\frac{2}{\mu}}\leq
C\Big[ |\log\eta|^{-\mu s}+\Big[ \log\para{\log|\log\eta|^{\frac{\mu s}{c}}
}\Big]^{-\frac{2}{\mu}} \Big].$$
 This completes the proof of
Theorem\ref{Thm2}.

\appendix
\section{ Well-posedness of the magnetic Schr\"odinger equation}
In this section we will establish the existence, uniqueness and continuous
dependence with respect to the data, of the solution $u$ of the Schr\"odinger equation
(\ref{Eq1}) with non-homogeneous Dirichlet-boundary condition $f\in
H^{2,1}_{0}(\Sigma)$ and an initial data $u_{0}\in H_{0}^{1}(\Omega)\cap
H^{2}(\Omega)$.
\subsection{Proof of Theorem \ref{Thm1.1}}
  We decompose the
solution $u$ of the Schr\"odinger equation (\ref{Eq1}) as $u=u_{1}+u_{2}$,
with $u_{1}$ and $u_{2}$ are respectively solutions to
$$\left\{
  \begin{array}{ll}
   (i\p_{t}+\Delta_{A})u_{1}=0 , & \mbox{in}\,Q \\
    u_{1}(x,0)=0, & \mbox{in}\,\Omega \\
    u_{1}(x,t)=f, & \hbox{on}\, \Sigma
  \end{array}
\right.,\,\,\,\,\,\,\,\,\,\,\,\,\,\,\,\,\,\,\,\,\,\,\,\,\,\,\,\,\,\left\{
                                                                    \begin{array}{ll}
                                                                     (i\p_{t}+\Delta_{A}+q)u_{2}=-qu_{1} , & \mbox{in}\,Q \\
                                                                     u_{2}(x,0)=u_{0} , & \mbox{in}\,\Omega \\
                                                                     u_{2}=0 , & \mbox{on}\,\Sigma
                                                                    \end{array}
                                                                  \right.
$$
Using the fact that $f\in H^{2,1}_{0}(\Sigma)$, we can see from
\cite{[MB]}[Theorem 1.1] that
\begin{equation}\label{Aequation1}
u_{1}\in\mathcal{C}^{1}(0,T;H^{1}(\Omega)),
\end{equation}
 and 
\begin{equation}\label{Aequation2}
\|u_{1}\|_{\mathcal{C}^{1}(0,T;H^{1}(\Omega))}\leq C\|f\|_{H^{2,1}(\Sigma)}.
\end{equation}
 Moreover, we have $\p_{\nu}u_{1}\in
L^{2}(\Sigma)$, and we get  a constant $C>0$ such that
\begin{equation}\label{Aequation3}
\|\p_{\nu}u_{1}\|_{L^{2}(\Sigma)}\leq C\|f\|_{H^{2,1}(\Sigma)}.
\end{equation}
On the other hand, from \cite{MYE}[Lemma 2.1] , we conclude  the existence
of a unique solution
\begin{equation}\label{Aequation4}
u_{2}\in \mathcal{C}^{1}(0,T;L^{2}(\Omega))\cap \mathcal{C}(0,T;H^{2}(\Omega)\cap H^{1}_{0}(\Omega)),
\end{equation}
that satisfies
\begin{eqnarray}\label{Aequation5}
\|u_{2}(.,t)\|_{H^{1}_{0}(\Omega)}&\leq& C\para{\|qu_{1}\|_{W^{1,1}(0,T;L^{2}(\Omega)}+\|u_{0}\|_{H^{1}_{0}\cap H^{2}}}.\cr
&\leq& C\para{\|u_{0}\|_{H^{1}_{0}\cap H^{2}}+\|f\|_{H^{2,1}(\Sigma)}}.
\end{eqnarray}
Next,  we consider a $\mathcal{C}^{2}$ vector field $N$ satisfying
$$N(x)=\nu(x),\,\,\,\,x\in \Gamma,\,\,\,\,\,\,|N(x)|\leq 1,\,\,\,x\in \Omega.$$
Multiplying  the second Schr\"odinger equation  by $N.\nabla \overline{u}_{2}$
and integrating over $Q=\Omega\times (0,T)$ we get
$$\begin{array}{lll}
-\displaystyle\int_{0}^{T}\!\!\int_{\Omega}q\,u_{1} \,N.\nabla \overline{u}_{2}\,dx\,dt\!&=&\!i\displaystyle\int_{0}^{T}\!\!\int_{\Omega}\p_{t}u_{2}\,
 N.\nabla \overline{u}_{2} \,dx\,dt
+\displaystyle\int_{0}^{T}\int_{\Omega}\Delta u_{2} \,N.\nabla \overline{u}_{2}\,dx\,dt\\
&&+\displaystyle\int_{0}^{T}\int_{\Omega}(2iA.\nabla+idiv\,A-|A|^{2}+q)u_{2}\, N.\nabla \overline{u}_{2}\,dx\,dt=I_{1}+I_{2}+I_{3}.
\end{array}$$
By integrating with respect to
$t$ in the first term $I_{1}$, we get
$$\begin{array}{lll}
I_{1}&=&i\displaystyle\int_{\Omega}\Big[u_{2}(x,T)\,N.\nabla\overline{u}_{2}(x,T)-u_{2}(x,0) \,N.\nabla\overline{u}_{2}(x,0)\Big]\,dx\\
&&-i\displaystyle\int_{0}^{T}\int_{\Omega} N.\nabla(u_{2}\,\p_{t}\overline{u}_{2})\,dx\,dt+i\displaystyle\int_{0}^{T}
\int_{\Omega}\p_{t}\overline{u}_{2}\,N.\nabla u_{2}\,dx\,dt.
\end{array}$$
Therefore, bearing in mind that
$i\p_{t}\overline{u}_{2}=-q\,\overline{u}_{1}-\overline{\Delta_{A}u_{2}}-q\overline{u}_{2}$,
we get
$$\begin{array}{lll}
2\Re\, I_{1}&=& i\displaystyle\int_{\Omega}\Big[u_{2}(x,T)\, N.\nabla \overline{u}_{2}(x,T)-u_{0}\,N.\nabla \overline{u}_{0}\Big]\,dx-
\displaystyle\int_{0}^{T}\int_{\Omega} div \,N\,q\,u_{2}\overline{u}_{1}\,dx\,dt\\
&&-\displaystyle\int_{0}^{T}\int_{\Omega}div\,N\,q\,|u_{2}|^{2}\,dx\,dt+\displaystyle\int_{0}^{T}\int_{\Omega}\nabla_{A}(div\,N\,u_{2}).\nabla_{A}\overline{u}_{2}\,dx\,dt\\
&&-i\displaystyle\int_{0}^{T}\int_{\Gamma}  u_{2}\p_{t}\overline{u}_{2}\,d\sigma\,dt-\displaystyle\int_{0}^{T}\int_{\Sigma} \p_{\nu}\overline{u}_{2}(u_{2}\,div\,N)\,d\sigma\,dt.
\end{array}$$
As the last term vanishes since $u_{2}=0$ on $\Sigma$, we deduce from  (\ref{Aequation5}) that
$$|\Re\, I_{1}|\leq C
\para{\|f\|_{H^{2,1}(\Sigma)}^{2}+\|u_{0}\|^{2}_{H^{1}_{0}\cap H^{2}}}.$$
On the other hand, by Green's Formula,  we have
$$\begin{array}{lll}
I_{2}&=&-\displaystyle\int_{0}^{T}\int_{\Omega} \nabla u_{2}\nabla (N.\nabla\overline{u}_{2})\,dx\,dt+\displaystyle\int_{0}^{T}\int_{\Gamma}\p_{\nu}u_{2}(N.\nabla \overline{u}_{2})\,d\sigma\,dt\\
&=&-\displaystyle\int_{0}^{T}\int_{\Omega}\nabla u_{2}.\nabla (N.\nabla\overline{u}_{2})\,dx\,dt+\displaystyle\int_{0}^{T}\int_{\Gamma}|\p_{\nu}u_{2}|^{2}\,d\sigma\,dt,
\end{array}$$
So, we get
$$\begin{array}{lll}
I_{2}&=&\displaystyle\int_{0}^{T}\int_{\Gamma}|\p_{\nu}u_{2}|^{2}\,d\sigma\,dt
-\displaystyle\frac{1}{2}\displaystyle\int_{0}^{T}\int_{\Omega} div(|\nabla u_{2}|^{2} N)\,dx\,dt\\
&&+\displaystyle\frac{1}{2}\displaystyle\int_{0}^{T}\int_{\Omega}|\nabla u_{2}|^{2}\,div\,N\,dx\,dt
-\displaystyle\int_{0}^{T}\displaystyle\int_{\Omega}DN(\nabla u_{2},\nabla \overline{u_{2}})\,dx\,dt.
\end{array}$$
Thus, we have
$$\begin{array}{lll}
I_{2}&=&\displaystyle\int_{0}^{T}\int_{\Gamma}|\p_{\nu}u_{2}|^{2}
-\displaystyle\frac{1}{2}\displaystyle\int_{0}^{T}\int_{\Gamma}|\nabla u_{2}|^{2}\,N.\nu\,d\sigma\,dt\\
&&+\displaystyle\frac{1}{2}\displaystyle\int_{0}^{T}\int_{\Omega}|\nabla u_{2}|^{2}\,div\,N\,dx\,dt
-\displaystyle\int_{0}^{T}\int_{\Omega}DN(\nabla u_{2},\nabla u_{2})\,dx\,dt.
\end{array}$$
Next, using the fact that
$$|\nabla u_{2}|^{2}=|\p_{\nu}u_{2}|^{2}+|\nabla_{\tau}u_{2}|^{2}=|\p_{\nu}u_{2}|^{2},\,\,\,\,\,\,\,\,\,x\in\Gamma,$$
where $\nabla_{\tau}$ is the tangential gradient on $\Gamma$, we obtain
$$\begin{array}{lll}
\Re\, I_{2}&=&\displaystyle\frac{1}{2}\displaystyle\int_{0}^{T}\int_{\Omega}|\p_{\nu}u_{2}|^{2}d\sigma\,dt+\frac{1}{2}\displaystyle\int_{0}^{T}|\nabla u_{2}|^{2}\,div\,N\,dx\,dt\\
&&-\displaystyle\int_{0}^{T}\int_{\Omega}DN(\nabla u_{2},\nabla \overline{u}_{2})\,dx\,dt.
\end{array}$$
Moreover, by (\ref{Aequation5}), it is easy to see that
$$|\Re \, I_{3}|\leq C\para{\|f\|_{H^{2,1}(\Sigma)}^{2}+\|u_{0}\|^{2}_{H^{1}_{0}\cap H^{2}}},$$
so that, we deduce from the above statements that
$$\begin{array}{lll}
\|\p_{\nu}u_{2}\|_{L^{2}(\Sigma)}&\leq& C\para{\|f\|_{H^{2,1}(\Sigma)}+\|u_{0}\|_{H^{1}_{0}\cap H^{2}}}.
\end{array}$$
From the above reasoning, we conclude that $u=u_{1}+u_{2}\in
\mathcal{C}(0,T;H^{1}(\Omega))$, $\p_{\nu}u\in L^{2}(\Sigma)$ and we have
$$\|u(.,t)\|_{H^{1}(\Omega)}+\|\p_{\nu}u\|_{L^{2}(\Sigma)}\leq C\para{\|f\|_{H^{2,1}(\Sigma)}+\|u_{0}\|_{H^{1}_{0}\cap H^{2}}}.$$
\section{ Some fundamental statements} In this section, we 
collect several technical results that are needed in the proof of
the main results. We first introduce the following notations. Let $P(D)$ be a
differential operator with $D=-i(\p_{t},\p_{x})$. We denote by
$$ \widetilde{P}(\xi,\tau)=\Big( \sum_{k\in \mathbb{N}}\sum_{\alpha\in\mathbb{N}^{n}}|\p^{k}_{\tau}\p^{\alpha}_{\xi}
P(\xi,\tau)|^{2} \Big)^{\frac{1}{2}},\,\,\,\,\,\,\,\,\,\,\xi\in
\R^{n},\,\tau\in\R.$$ For $1\leq p\leq \infty$, we define the space
$$B_{p,\widetilde{P}}=\{f\in S'(\R^{n+1}),\,\,\,\widetilde{P} \mathcal{F}(f)\in L^{p}(\R^{n+1})\}, $$
equipped with the following norm
$$\|f\|_{B_{p,\widetilde{P}}}=\|\widetilde{P}\mathcal{F}(f)\|_{L^{p}(\R^{n+1})}.$$
We finally  denote by
$$B_{p,\widetilde{P}}^{loc}=\{f\in S'(\R^{n+1}), \,\varphi f\in B_{p,\widetilde{P}},\,\,\forall\,\varphi\in\mathcal{C}_{0}^{\infty}(\R^{n+1})\}.$$
 We start by recalling some known results of H\"ormander:
\begin{Lemm}\label{Blm1}
Let $u\in B_{\infty,\widetilde{P}}$ and $v\in
\mathcal{C}^{\infty}_{0}(\R^{n+1})$. Then, we have  $uv\in B_{\infty,\widetilde{P}}$, 
and 
$$\|uv\|_{\infty,\widetilde{P}}\leq C \|u\|_{\infty,\widetilde{P}},$$
where the positive constant $C$  depends only on $v,\,n$ and the degree of
$P$.
\end{Lemm}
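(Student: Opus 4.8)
The plan is to transfer everything to the Fourier side and exploit that $\widetilde{P}$ is a temperate weight function in H\"ormander's sense. First I would record that, since $P\neq 0$, at least one of the derivatives $\p_\tau^{k}\p_\xi^{\alpha}P$ is a nonzero constant, so $\widetilde{P}(\xi,\tau)\ge c_P>0$ for all $(\xi,\tau)$; hence any $u\in B_{\infty,\widetilde P}$ satisfies $\mathcal{F}u=(\widetilde{P}\,\mathcal{F}u)/\widetilde{P}\in L^{\infty}(\R^{n+1})$, so $u\in\mathcal{S}'(\R^{n+1})$, and since $v\in\mathcal{C}_0^{\infty}(\R^{n+1})$ the product $uv$ is a well-defined temperate distribution with $\mathcal{F}(uv)=c_n\,\mathcal{F}u*\mathcal{F}v$ for a fixed dimensional constant $c_n$; the convolution makes classical sense as $\mathcal{F}v$ is Schwartz. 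It therefore suffices to bound $\widetilde{P}(\zeta)\,\abs{(\mathcal{F}u*\mathcal{F}v)(\zeta)}$ uniformly over $\zeta=(\xi,\tau)\in\R^{n+1}$ by $C\,\norm{\widetilde{P}\,\mathcal{F}u}_{L^{\infty}}$.

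The technical core, and the only point requiring care, is the translation estimate: with $m=\deg P$ there is $C_0=C_0(m,n)$ such that $\widetilde{P}(\zeta)\le C_0\,(1+\abs{\eta})^{m}\,\widetilde{P}(\zeta-\eta)$ for all $\zeta,\eta\in\R^{n+1}$. To prove it, fix an index $(k,\alpha)$ with $k+\abs{\alpha}\le m$; the polynomial $\p_\tau^{k}\p_\xi^{\alpha}P$ has degree $\le m$, so Taylor's formula around $\zeta-\eta$ is a \emph{finite} sum, $\p_\tau^{k}\p_\xi^{\alpha}P(\zeta)=\sum_{\abs{\gamma}\le m}\tfrac{\eta^{\gamma}}{\gamma!}\,\p^{\gamma}\bigl(\p_\tau^{k}\p_\xi^{\alpha}P\bigr)(\zeta-\eta)$, and each $\p^{\gamma}(\p_\tau^{k}\p_\xi^{\alpha}P)$ is again one of the derivatives entering the definition of $\widetilde{P}$, whence $\abs{\p^{\gamma}(\p_\tau^{k}\p_\xi^{\alpha}P)(\zeta-\eta)}\le\widetilde{P}(\zeta-\eta)$. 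Since $\abs{\eta^{\gamma}}\le(1+\abs{\eta})^{m}$ and the number of terms is bounded by a constant depending only on $m,n$, one gets $\abs{\p_\tau^{k}\p_\xi^{\alpha}P(\zeta)}\le C_0(1+\abs{\eta})^{m}\widetilde{P}(\zeta-\eta)$; squaring and summing over the finitely many admissible $(k,\alpha)$ yields the claim. This is exactly H\"ormander's observation that $\widetilde{P}$ lies in the class of temperate weights (see \cite{[L.H]}); it is where the hypotheses that $P$ has constant coefficients and finite degree are used, and it pins down the exponent $m$ hidden in the constant.

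Combining the two steps, $\widetilde{P}(\zeta)\,\abs{(\mathcal{F}u*\mathcal{F}v)(\zeta)}\le C_0\int_{\R^{n+1}}(1+\abs{\eta})^{m}\,\widetilde{P}(\zeta-\eta)\,\abs{\mathcal{F}u(\zeta-\eta)}\,\abs{\mathcal{F}v(\eta)}\,d\eta\le C_0\,\norm{\widetilde{P}\,\mathcal{F}u}_{L^{\infty}}\int_{\R^{n+1}}(1+\abs{\eta})^{m}\abs{\mathcal{F}v(\eta)}\,d\eta$. Because $v\in\mathcal{C}_0^{\infty}(\R^{n+1})$, $\mathcal{F}v$ is Schwartz, so the last integral is a finite constant $C_v$ depending on $v$ (and on $m,n$). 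Taking the supremum over $\zeta$ and multiplying by $c_n$ gives $\norm{uv}_{\infty,\widetilde{P}}\le C\,\norm{u}_{\infty,\widetilde{P}}$ with $C=c_nC_0C_v$, which depends only on $v$, $n$ and $\deg P$, as claimed. Apart from the weight estimate, this is just Young's inequality for convolutions on the Fourier side, so no further obstacle is expected.
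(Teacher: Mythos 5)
Your proof is correct. The paper does not prove this lemma at all — it is stated as a "known result of H\"ormander" and referred to \cite{[L.H]} — and your argument (showing $\widetilde{P}$ is a temperate weight via the finite Taylor expansion $\widetilde{P}(\zeta)\leq C_{0}(1+|\eta|)^{m}\widetilde{P}(\zeta-\eta)$, then applying Young's inequality to $\mathcal{F}(uv)=c_{n}\,\mathcal{F}u\ast\mathcal{F}v$ with the Schwartz factor absorbing the polynomial weight) is precisely the classical proof from that reference, with the dependence of the constant on $v$, $n$ and $\deg P$ correctly tracked.
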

\begin{Lemm}\label{Blm2} Any differential operator $P(D)$ admits a fundamental solution
$F\in B_{\infty,\widetilde{P}}^{loc}$ \,satisfying\,\,\,\,
\,$\frac{F}{\cosh|(x,t)|}\in B_{\infty,\widetilde{P}}$. Moreover, it verifies
$$
\|\frac{F}{\cosh|(x,t)|}\|_{\infty,\widetilde{P}}\leq C,
$$
where $C$ is a positive constant that depends only on $n$ and the degree of
$P$.
\end{Lemm}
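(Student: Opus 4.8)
The plan is to reproduce Hörmander's classical construction of a fundamental solution with controlled growth, now phrased in the notation $\widetilde P$. First I would reduce to a Fourier‑side statement: it suffices to produce a tempered distribution $F$ with $P(D)F=\delta$ whose Fourier transform, after multiplication by the rapidly decaying weight $1/\cosh|(x,t)|$, is dominated pointwise by a constant times $1/\widetilde P$. Since $P\neq0$, the principal part $P_{m}$ (with $m:=\deg P$) is not identically $0$, so after an orthogonal change of variables — under which $\widetilde P$ is unchanged, the derivatives of each fixed order transforming orthogonally — I may assume $P_{m}(e_{1})\neq0$; then for every $\zeta\in\R^{n+1}$ the one‑variable polynomial $z\mapsto P(\zeta+ze_{1})$ has degree exactly $m$ with leading coefficient $P_{m}(e_{1})$.

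The heart of the matter is the choice of a contour. Combining the elementary Hörmander inequalities $|P(\zeta)|\le\widetilde P(\zeta)$, $\sup_{|\eta|\le1}|P(\zeta+\eta)|\ge c_{n}\widetilde P(\zeta)$, and $\widetilde P(\zeta+\eta)\le(1+C|\eta|)^{m}\widetilde P(\zeta)$ with the minimum‑modulus estimate for one‑variable polynomials (the set $\{z:|P(\zeta+ze_{1})|<\varepsilon\}$ lies in a union of disks of total radius $O((\varepsilon/|P_{m}(e_{1})|)^{1/m})$), one obtains a measurable — in fact piecewise‑constant, via Hörmander's staircase — function $\zeta\mapsto t(\zeta)\in\R$ with $|t(\zeta)|\le C(m)$ such that on the unit circle $\gamma_{\zeta}=\{t(\zeta)+e^{i\theta}:\theta\in[-\pi,\pi]\}$ one has $|P(\zeta+we_{1})|\ge c\,\widetilde P(\zeta)$ for all $w\in\gamma_{\zeta}$; equivalently the graph $\Gamma=\{\zeta+it(\zeta)e_{1}:\zeta\in\R^{n+1}\}$ is a bounded Lipschitz deformation of $\R^{n+1}$ on which $|P|\gtrsim\widetilde P$. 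Then I would set, for $\varphi\in\mathcal C_{0}^{\infty}(\R^{n+1})$,
$$\langle F,\varphi\rangle=\frac{1}{(2\pi)^{n+1}}\int_{\R^{n+1}}\frac{\widehat\varphi\bigl(-\zeta-it(\zeta)e_{1}\bigr)}{P\bigl(\zeta+it(\zeta)e_{1}\bigr)}\,J(\zeta)\,d\zeta ,$$
with $\widehat\varphi$ the entire (Paley--Wiener) continuation of the Fourier transform and $J$ the Jacobian of the parametrization; the integral converges since $\widehat\varphi$ decays rapidly on the bounded strip containing $\Gamma$ while $1/|P|$ is bounded there, so $F\in\mathcal S'$.

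That $P(D)F=\delta$ follows by a contour shift: pairing against $\varphi$ and using that the transpose of $P(D)$ acts on the Fourier side as multiplication by the symbol $P$, the numerator becomes $P\cdot\widehat\varphi$ and cancels the denominator, so $\langle P(D)F,\varphi\rangle$ reduces to an integral of the entire function $\widehat\varphi$ over $\Gamma$; since $\Gamma$ lies in the tube $|\Im\zeta|\le C(m)$, Cauchy's theorem lets one slide $\Gamma$ back to $\R^{n+1}$ (the vertical joins of the staircase contributing negligibly), leaving $(2\pi)^{-(n+1)}\int_{\R^{n+1}}\widehat\varphi(-\zeta)\,d\zeta=\varphi(0)$. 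For the quantitative bound, $\mathcal F F$ is, up to the bounded Jacobian, the push‑forward of the bounded density $1/P(\zeta+it(\zeta)e_{1})$, so with $K:=\mathcal F(1/\cosh|(x,t)|)$ one has $\bigl|\mathcal F(F/\cosh|(x,t)|)(\eta)\bigr|\le C\int|K(\eta-\zeta)|\,|P(\zeta+it(\zeta)e_{1})|^{-1}\,d\zeta\le C'\int|K(\eta-\zeta)|\,\widetilde P(\zeta)^{-1}\,d\zeta$; multiplying by $\widetilde P(\eta)$ and invoking $\widetilde P(\eta)\le(1+C|\eta-\zeta|)^{m}\widetilde P(\zeta)$ bounds this by $C\int(1+C|\eta-\zeta|)^{m}|K(\eta-\zeta)|\,d\zeta<\infty$ — finite because $1/\cosh|(x,t)|$ extends analytically to a strip, so $K$ decays exponentially — with a constant depending only on $n$ and $m$. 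Hence $F/\cosh|(x,t)|\in B_{\infty,\widetilde P}$ with the stated estimate, and $F\in B_{\infty,\widetilde P}^{loc}$ follows by writing $\varphi F=(\varphi\cosh|(x,t)|)\cdot(F/\cosh|(x,t)|)$ and applying Lemma \ref{Blm1}.

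The main obstacle is exactly the construction of the bounded shift $t(\zeta)$ with the lower bound $|P(\zeta+we_{1})|\gtrsim\widetilde P(\zeta)$ along $\gamma_{\zeta}$: this rests on the interplay of the three Hörmander inequalities with the minimum‑modulus estimate for polynomials and demands a careful measurable (staircase) selection so that $\Gamma$ is admissible in Cauchy's theorem. Once it is in hand, convergence of the defining integral, the contour‑shift proof of $P(D)F=\delta$, and the exponential‑decay‑versus‑polynomial‑growth estimate are all routine.
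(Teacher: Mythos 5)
The paper itself offers no proof of this lemma: it is recalled verbatim from H\"ormander \cite{[L.H]} (Vol.~I, Theorem~7.3.10 / Vol.~II, Theorem~10.2.1), so the only issue is whether your reconstruction is sound. It is not: the contour-selection lemma on which everything rests is false. You claim a measurable real shift $t(\zeta)$ with $|t(\zeta)|\le C(m)$ such that $|P(\zeta+we_{1})|\ge c\,\widetilde P(\zeta)$ for all $w$ on the unit circle centred at $t(\zeta)$, with constants depending only on $n$ and $m=\deg P$. Take $n+1\ge 3$, $P(\zeta)=\zeta_{1}^{2}+\zeta_{2}\zeta_{3}$ (so $P_{m}(e_{1})=1\neq0$ and no rotation is needed) and $\zeta=(0,R,0,\dots,0)$. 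Then $P(\zeta+we_{1})=w^{2}$, which is bounded by $(C(m)+1)^{2}$ on every admissible circle, while $\widetilde P(\zeta)\ge|\partial_{\zeta_{3}}P(\zeta)|=R\to\infty$. The obstruction is structural: $\widetilde P(\zeta)$ records derivatives of $P$ in \emph{all} directions, whereas the restriction of $P$ to the complex line $\zeta+\C e_{1}$ is blind to the derivatives transversal to $e_{1}$; the inequality $\sup_{|\eta|\le1}|P(\zeta+\eta)|\ge c_{n}\widetilde P(\zeta)$ that you invoke requires $\eta$ to range over the full $(n+1)$-dimensional unit ball and cannot be localised to a one-dimensional slice. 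What the pigeonhole argument on the $m$ roots of $w\mapsto P(\zeta+we_{1})$ actually yields is only the constant lower bound $|P(\zeta+we_{1})|\ge c_{0}|P_{m}(e_{1})|$ on a good circle; this produces a Malgrange--Ehrenpreis-type fundamental solution with locally bounded Fourier transform, but not membership in $B_{\infty,\widetilde P}$. Your subsequent convolution estimate uses precisely the false bound $|P(\zeta+it(\zeta)e_{1})|^{-1}\le C\,\widetilde P(\zeta)^{-1}$, so the error propagates to the final inequality.

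To repair this you must follow H\"ormander's actual construction: the Fourier representation of $F$ is an average over the whole unit circle $|w|=1$ in a complexified non-characteristic direction against a weight built from the roots of $w\mapsto P(\zeta+w\theta)$, normalised so that the contour-shift argument for $P(D)F=\delta$ survives; the $\widetilde P$-weighted bound is then extracted from one-variable inequalities of the type $|a_{m}G(0)|\le C_{m}\sup_{|w|=1}|G(w)p(w)|$ applied to the product of $P$ with the Paley--Wiener transform of the test function, combined with $\widetilde P(\zeta+\eta)\le(1+C|\eta|)^{m}\widetilde P(\zeta)$ and the exponential decay of $\mathcal{F}(1/\cosh|(x,t)|)$, which is what lets the estimate sweep out the full unit ball of directions. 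The peripheral parts of your argument --- the Paley--Wiener contour shift proving $P(D)F=\delta$, the exponential-decay-versus-polynomial-growth convolution estimate, and the deduction of $F\in B^{loc}_{\infty,\widetilde P}$ from $F/\cosh|(x,t)|\in B_{\infty,\widetilde P}$ via Lemma~\ref{Blm1} --- are fine once a correct representation of $F$ is in place.
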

Our first goal in this section is to prove the following theorem:
\begin{theorem}
 Let $P\neq0$ be a differential operator. Then for all $k\in \mathbb{N}$, there exists a linear operator
$$E: L^{2}(0,T;H^{k}(\Omega))\rightarrow L^{2}(0,T;H^{k}(\Omega)),$$
 such that:
 \begin{enumerate}
   \item $P(D)Ef=f,$ for any $f\in L^{2}(0,T;H^{k}(\Omega)).$
   \item For any linear differential operator with constant coefficient
       $Q(D)$ such that
       $\displaystyle\frac{|Q(\xi,\tau)|}{\widetilde{P}(\xi,\tau)}$ is
       bounded, we have $Q(D) E\in B (L^{2}(0,T;H^{k}(\Omega)))$ and
       $$\|Q(D) E f\|_{L^{2}(0,T;H^{k}(\Omega))}\leq C\sup_{\R^{n+1}}\frac{|Q(\xi,\tau)|}{\widetilde{P}(\xi,\tau)}\,\|f\|_{L^{2}(0,T;H^{k}(\Omega))}, $$
 \end{enumerate}
where $C$ depends only on the degree of $P$, $\Omega$ and $T$.
\end{theorem}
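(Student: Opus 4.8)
The plan is to establish a quantitative Malgrange--Ehrenpreis theorem in the style of H\"ormander \cite{[L.H]}: construct the right inverse $E$ by convolving against a suitable cut-off of the fundamental solution $F$ produced by Lemma \ref{Blm2}, and then control every operator $Q(D)E$ through the single Fourier-multiplier estimate $\|\widetilde{P}\,\mathcal{F}(\chi F)\|_{L^{\infty}}\le C$, which is exactly what Lemmas \ref{Blm1}--\ref{Blm2} are designed to yield.

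\emph{Construction.} First I would fix, once and for all, a bounded linear extension operator $\mathcal{E}\colon L^{2}(0,T;H^{k}(\Omega))\to L^{2}(\R;H^{k}(\R^{n}))$ whose range consists of functions supported in a fixed compact set $K\subset\R^{n+1}$ (apply a Stein-type extension in the space variable pointwise in $t$, multiply by a fixed space cut-off, and extend by zero in $t$), so that $\|\mathcal{E}f\|_{L^{2}(\R;H^{k}(\R^{n}))}\le C\|f\|_{L^{2}(0,T;H^{k}(\Omega))}$. Let $F$ be the fundamental solution of $P(D)$ from Lemma \ref{Blm2}, so $P(D)F=\delta$ and $F/\cosh|(x,t)|\in B_{\infty,\widetilde{P}}$. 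Choose $\chi\in\mathcal{C}_{0}^{\infty}(\R^{n+1})$ with $\chi\equiv1$ on a neighbourhood of the compact set $\overline{Q}-K=\{x-y:\ x\in\overline{Q},\ y\in K\}$; this set contains $0$. Since $\chi F=\big(\chi\cosh|(x,t)|\big)\big(F/\cosh|(x,t)|\big)$ and $\chi\cosh|(x,t)|\in\mathcal{C}_{0}^{\infty}(\R^{n+1})$, Lemma \ref{Blm1} gives $\chi F\in B_{\infty,\widetilde{P}}$, i.e. $\mathcal{F}(\chi F)$ is a bounded function with $\|\widetilde{P}\,\mathcal{F}(\chi F)\|_{L^{\infty}(\R^{n+1})}\le C$; since $\widetilde{P}$ is bounded below by a positive constant (some $\p^{\delta}P$ is a non-zero constant), also $\|\mathcal{F}(\chi F)\|_{L^{\infty}}\le C$. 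I then set
$$Ef:=\big((\chi F)*\mathcal{E}f\big)\big|_{Q}.$$
As $\chi F$ is a compactly supported distribution and $\mathcal{E}f\in L^{2}$ has compact support, $(\chi F)*\mathcal{E}f\in L^{2}(\R^{n+1})$ with $\mathcal{F}\big((\chi F)*\mathcal{E}f\big)=\mathcal{F}(\chi F)\,\mathcal{F}(\mathcal{E}f)$, and $E$ is linear.

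\emph{Verification.} For (1): by the Leibniz rule $P(D)(\chi F)=\delta+r$, where $r$ gathers the terms in which at least one derivative falls on $\chi$, so $\mathrm{supp}\,r\subset\mathrm{supp}(\nabla\chi)$, a set disjoint from a neighbourhood of $\overline{Q}-K$. Hence $P(D)Ef=\big(\mathcal{E}f+r*\mathcal{E}f\big)\big|_{Q}$, and $r*\mathcal{E}f$ vanishes on $Q$ because $\mathrm{supp}(r*\mathcal{E}f)\subset\mathrm{supp}(\nabla\chi)+K$ does not meet $Q$; since $\mathcal{E}f=f$ on $Q$, this gives $P(D)Ef=f$. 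For (2): take $Q(D)$ with $|Q|/\widetilde{P}$ bounded and $|\beta|\le k$; constant-coefficient operators commute, so $\p_{x}^{\beta}Q(D)\big((\chi F)*\mathcal{E}f\big)=Q(D)\big((\chi F)*\p_{x}^{\beta}\mathcal{E}f\big)$, with Fourier transform $Q(\xi,\tau)\,\mathcal{F}(\chi F)\,\mathcal{F}(\p_{x}^{\beta}\mathcal{E}f)$, and Plancherel gives
\begin{align*}
\big\|\p_{x}^{\beta}Q(D)Ef\big\|_{L^{2}(Q)}
&\le\Big(\sup_{\R^{n+1}}\frac{|Q(\xi,\tau)|}{\widetilde{P}(\xi,\tau)}\Big)\big\|\widetilde{P}\,\mathcal{F}(\chi F)\big\|_{L^{\infty}}\big\|\p_{x}^{\beta}\mathcal{E}f\big\|_{L^{2}(\R^{n+1})}\\
&\le C\Big(\sup_{\R^{n+1}}\frac{|Q(\xi,\tau)|}{\widetilde{P}(\xi,\tau)}\Big)\|f\|_{L^{2}(0,T;H^{k}(\Omega))}.
\end{align*}
Summing over $|\beta|\le k$ yields the stated inequality; in particular $Q(D)E$ is bounded on $L^{2}(0,T;H^{k}(\Omega))$, and the same computation with $Q(D)=\mathrm{Id}$ (using that $1/\widetilde{P}$ is bounded) shows that $E$ itself maps $L^{2}(0,T;H^{k}(\Omega))$ into itself.

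\emph{Where the difficulty lies.} The genuinely analytic content---existence of a fundamental solution with the Beurling-type bound $\widetilde{P}\,\mathcal{F}(\chi F)\in L^{\infty}$---is already isolated in Lemmas \ref{Blm1}--\ref{Blm2}, so the remainder is bookkeeping. The two points that need care are the localisation and the regularity transfer. For the localisation one must take $K$ to be a fixed compact set containing the supports of \emph{all} the extended data and then insist that $\chi\equiv1$ on a whole neighbourhood of $\overline{Q}-K$; this is what forces the error term $r*\mathcal{E}f$ to vanish on all of $Q$ (not merely on a proper subset) and keeps $C$ dependent only on $\mathrm{deg}\,P$, $k$, $\Omega$ and $T$. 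For the regularity transfer one genuinely needs the $H^{k}$-extension $\mathcal{E}f$ rather than the zero-extension of $f$, since the latter fails to lie in $H^{k}(\R^{n})$ once $k\ge1$, whereas it is precisely the $H^{k}$-regularity of $\mathcal{E}f$---transported through the commutation of $\p_{x}^{\beta}$ with convolution by $\chi F$---that produces the $H^{k}$-regularity of $Ef$.
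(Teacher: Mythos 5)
Your proof is correct and follows essentially the same route as the paper: extend $f$ to a compactly supported function of $(x,t)$, convolve with the fundamental solution furnished by Lemma \ref{Blm2}, restrict to $Q$, and control every $Q(D)E$ through the single multiplier bound $\|\widetilde{P}\,\mathcal{F}(\chi F)\|_{L^{\infty}}\leq C$ obtained from Lemmas \ref{Blm1}--\ref{Blm2} via the $\cosh$ factorization. The only cosmetic difference is that you place the cutoff on $F$ from the outset and dispose of the Leibniz remainder by a support argument, whereas the paper cuts off the extended data $\psi\widetilde{f}_{0}$ and inserts the cutoff $\varphi$ into $F$ only at the stage of the $L^{2}$ estimate.
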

\begin{proof}{}
Let $f\in L^{2}(0,T;H^{k}(\Omega))$. There exists an
extension operator
$$\begin{array}{ccc}
S:L^{2}(0,T;H^{k}(\Omega))&\longrightarrow &L^{2}(0,T;H^{k}(\R^{n}))\\
f&\longmapsto& \widetilde{f},
\end{array}$$
such that for all $t\in (0,T)$, we have
$\widetilde{f}(.,t)_{|\Omega}=f(.,t)$. Next, we introduce 
$$\widetilde{f}_{0}=\left\{
                           \begin{array}{ll}
                             \widetilde{f}, & t\in(0,T),\,\,\,\,x\in\R^{n}\\
                             \\
                             0, & t\notin (0,T),\,\,\,\,x\in\R^{n}.
                           \end{array}
                         \right.$$
So, we have $\widetilde{f}_{0|Q}=f$. Let $R>0$  and $V$ be a neighborhood
of $\overline{Q}$. We consider $\psi\in \mathcal{C}_{0}^{\infty}(\R^{n+1})$
such that $\psi_{|V}=1$ and satisfying supp $\psi\subset
B(0,R)\subset\R^{n+1}$. Let $F$ be a fundamental solution of $P$. We consider
the following operator
$$\begin{array}{rrr}
E:L^{2}(0,T;H^{k}(\Omega))&\longrightarrow& L^{2}(0,T;H^{k}(\Omega))\\
f&\longmapsto& E(f)=(F\ast\psi \widetilde{f}_{0})_{|Q}
\end{array}$$
Since $P(D)(F\ast\psi\widetilde{f}_{0})=\psi\widetilde{f}_{0}$, then we
 clearly have
$$P(D)Ef= (\psi\widetilde{f}_{0})_{|Q}=f.$$
We turn now to proving the second point. For this purpose, we consider
$\varphi\in \mathcal{C}_{0}^{\infty}(\R^{n+1})$ such that $\varphi=1$ on a
neighborhood of the closure of 
 $\{x-y,\,\, \,x,\,y\in Q\}.$
We  can easily  verify that
$$(F\ast \psi\widetilde{f}_{0})_{|Q}=(\varphi F\ast\psi\widetilde{f}_{0})_{|Q}.$$
 The last
identity entails that for all $\alpha\in \mathbb{N}^{n}, $ such that
$|\alpha|\leq k$, we have
\begin{eqnarray}\label{Beq}
\|\p^{\alpha} Q(D)Ef\|_{L^{2}(Q)}&=&\|Q(D) \p^{\alpha} (F\ast \psi\widetilde{f}_{0})\|_{L^{2}(Q)}\cr
&=&\|Q(D) \varphi F\ast \p^{\alpha}(\psi\widetilde{f}_{0})\|_{L^{2}(Q)}\cr
&\leq&\|Q(D) \varphi F\ast \p^{\alpha}(\psi\widetilde{f}_{0})\|_{L^{2}(\R^{n+1})}\cr
&\leq& \| \mathcal{F}\big(Q(D) \varphi F\ast \p^{\alpha}(\psi\widetilde{f}_{0})\big)\|_{L^{2}(\R^{n+1})}\cr
&\leq& \|Q(\xi,\tau)\mathcal{F}(\varphi F) \mathcal{F}(\p^{\alpha}(\psi \tilde{f}_{0}))\|_{L^{2}(\R^{n+1})}\cr
&\leq&\|Q(\xi,\tau)\mathcal{F}(\varphi F)\|_{L^{\infty}(\R^{n+1})}\|\p^{\alpha}(\psi \tilde{f}_{0})\|_{L^{2}(\R^{n+1})}\cr
&\leq&\|Q(\xi,\tau)\mathcal{F}(\varphi F)\|_{L^{\infty}(\R^{n+1})}\|\p^{\alpha} f\|_{L^{2}(Q)}.
\end{eqnarray}
Using the fact that
$$ Q(\xi,\tau)\mathcal{F}(\varphi
F)=\frac{Q(\xi,\tau)}{\widetilde{P}(\xi,\tau)}\widetilde{P}(\xi,\tau)
\mathcal{F}\Big(\varphi \cosh|(x,t)|\frac{F}{\cosh|(x,t)|}\Big),$$
 we deduce from Lemma \ref{Blm1} and Lemma \ref{Blm2} that
\begin{equation}\label{Beq (0.2)}
\|Q(\xi,\tau)\mathcal{F}(\varphi F)\|_{L^{\infty}(\R^{n+1})}\leq C \sup_{(\xi,\tau)\in\R^{n+1}}\frac{|Q(\xi,\tau)|}{\widetilde{P}(\xi,\tau)}.
\end{equation}
Then from (\ref{Beq}) and (\ref{Beq (0.2)}), we get
\begin{equation}\label{Beq3}
\|\p^{\alpha} Q(D)Ef\|_{L^{2}(Q)}\leq C \sup_{(\xi,\tau)\in\R^{n+1}}\frac{|Q(\xi,\tau)|}{\widetilde{P}(\xi,\tau)}\|f\|_{L^{2}(0,T;H^{k}(\Omega))},\,\,\,\,\,
 \forall\,\alpha\in \mathbb{N}^{n},\,\, |\alpha|\leq k.
\end{equation}
Thus, we find that
$$\|Q(D) E f\|_{L^{2}(0,T;H^{k}(\Omega))}\leq C\sup_{\R^{n+1}}\frac{|Q(\xi,\tau)|}{\widetilde{P}(\xi,\tau)}\,\|f\|_{L^{2}(0,T;H^{k}(\Omega))}, $$
which completes the proof of the lemma.
\end{proof}
Finally, we establish the following statement:
 \begin{Lemm}\label{lm6.4}
 Let $\Omega\subset \R^{n}$ be a simply connected domain, and let $A\in \mathcal{C}^{2}(\Omega,\R^{n})$ be such that $A_{| \Gamma}=0$. Then, for $p>n$,
there exists  a function $\varphi\in \mathcal{C}^{3}(\Omega)$ such that
$\varphi_{|\Gamma}=0$ and $A'\in W^{1,p}(\Omega, \R^{n})$, satisfying\,\,
 $A=A'+\nabla \varphi, \,\,\,\,A'\wedge \nu=0,$ and $\mbox{div} A'=0.$
 Moreover, there exists a constant $C>0$, such that
 \begin{equation}\label{Equation C3.37}
\|A'\|_{W^{1,p}(\Omega)}\leq C\, \|\mbox{curl} \,A'\|_{L^{p}(\Omega)}.
\end{equation}
 \end{Lemm}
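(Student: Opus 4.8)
The plan is to obtain $\varphi$ from a Dirichlet problem for the Laplacian, to read off $A'$ as the divergence-free remainder, and then to deduce (\ref{Equation C3.37}) from the $L^p$ theory of the $\mathrm{div}$--$\mathrm{curl}$ system on $\Omega$.

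\textbf{Construction of the decomposition.} Since $A\in\mathcal{C}^2(\Omega,\R^n)$ with $A_{|\Gamma}=0$, the function $\mathrm{div}\,A$ is Hölder continuous up to $\Gamma$, so I would let $\varphi$ be the unique solution of $\Delta\varphi=\mathrm{div}\,A$ in $\Omega$ with $\varphi_{|\Gamma}=0$. As $\Gamma$ is $\mathcal{C}^{\infty}$ and the right-hand side is smooth enough, Schauder estimates give $\varphi\in\mathcal{C}^3(\overline{\Omega})$, whence $\nabla\varphi\in\mathcal{C}^2(\overline{\Omega},\R^n)$. Setting $A'=A-\nabla\varphi$, one has $\mathrm{div}\,A'=\mathrm{div}\,A-\Delta\varphi=0$ and $\mathrm{curl}\,A'=\mathrm{curl}\,A$ (because $\mathrm{curl}\,\nabla\varphi=0$), and $A'\in\mathcal{C}^2(\overline{\Omega},\R^n)\subset W^{1,p}(\Omega,\R^n)$ for every $p$. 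For the boundary condition: on $\Gamma$ we have $A'=-\nabla\varphi$ since $A_{|\Gamma}=0$, and because $\varphi$ vanishes identically on $\Gamma$ its tangential gradient vanishes there, i.e.\ $\nabla\varphi$ is everywhere parallel to $\nu$ along $\Gamma$; hence $A'\wedge\nu=-\nabla\varphi\wedge\nu=0$ on $\Gamma$.

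\textbf{The $W^{1,p}$ estimate.} It remains to prove (\ref{Equation C3.37}). The main tool is the $L^p$ estimate for the $\mathrm{div}$--$\mathrm{curl}$ system with tangential boundary condition: for $u\in W^{1,p}(\Omega,\R^n)$ with $u\wedge\nu=0$ on $\Gamma$ there is a constant $C=C(\Omega,p)>0$ such that
$$
\|u\|_{W^{1,p}(\Omega)}\le C\bigl(\|\mathrm{div}\,u\|_{L^p(\Omega)}+\|\mathrm{curl}\,u\|_{L^p(\Omega)}+\|u\|_{L^p(\Omega)}\bigr),
$$
obtained from Calderón--Zygmund regularity applied to $-\Delta u=\mathrm{curl}\,\mathrm{curl}\,u-\nabla\,\mathrm{div}\,u$ together with the $L^p$ solvability of the corresponding boundary value problem (the condition $p>n$ enters only through the attendant Sobolev embeddings). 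Applying this to $u=A'$ and using $\mathrm{div}\,A'=0$ gives $\|A'\|_{W^{1,p}(\Omega)}\le C\bigl(\|\mathrm{curl}\,A'\|_{L^p(\Omega)}+\|A'\|_{L^p(\Omega)}\bigr)$. Finally the zeroth-order term is removed: since $\Omega$ is simply connected there is no nontrivial field $h$ with $\mathrm{div}\,h=0$, $\mathrm{curl}\,h=0$ and $h\wedge\nu=0$ on $\Gamma$, so a standard compactness--contradiction argument based on the compact embedding $W^{1,p}(\Omega)\hookrightarrow L^p(\Omega)$ absorbs $\|A'\|_{L^p(\Omega)}$ and yields (\ref{Equation C3.37}).

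\textbf{Main obstacle.} The only genuinely delicate point is the last step: one needs the full $L^p$ (not merely $L^2$) regularity theory for the $\mathrm{div}$--$\mathrm{curl}$ system under the condition $A'\wedge\nu=0$, together with the vanishing of the corresponding harmonic fields on $\Omega$. I would invoke the classical results on this system (von Wahl--type estimates, or the treatment in \cite{[MC]}) rather than reproving them here.
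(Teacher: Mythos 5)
Your proposal follows essentially the same route as the paper: $\varphi$ is obtained from the Dirichlet problem $\Delta\varphi=\mathrm{div}\,A$, $\varphi_{|\Gamma}=0$, the divergence-free remainder $A'=A-\nabla\varphi$ satisfies $A'\wedge\nu=0$ because $A$ and the tangential gradient of $\varphi$ both vanish on $\Gamma$, and the estimate (\ref{Equation C3.37}) is proved by a compactness--contradiction argument resting on the fact that a simply connected domain carries no nontrivial field with $\mathrm{div}\,h=0$, $\mathrm{curl}\,h=0$ and $h\wedge\nu=0$. The one place you go beyond the paper is the explicit appeal to the a priori bound $\|u\|_{W^{1,p}(\Omega)}\le C\bigl(\|\mathrm{div}\,u\|_{L^{p}(\Omega)}+\|\mathrm{curl}\,u\|_{L^{p}(\Omega)}+\|u\|_{L^{p}(\Omega)}\bigr)$ before absorbing the zeroth-order term: the paper's contradiction argument extracts only a weak $W^{1,p}$ limit and then asserts that this limit has $W^{1,p}$-norm equal to $1$, which does not follow from weak convergence alone and in fact needs precisely such an estimate (applied to $A'_{k}-A'_{\ell}$, say, to upgrade the strong $L^{p}$ convergence to strong $W^{1,p}$ convergence), so your intermediate step is not redundant but closes a small gap in the published argument.
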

 \begin{proof}{}
 Let $\varphi$ be the solution of the following problem
 \begin{equation}
\left\{
     \begin{array}{ll}
       \Delta \varphi=\mbox{div} A, & \mbox{in}\,\Omega \\
       \varphi=0, & \mbox{in}\,\Gamma.
     \end{array}
   \right.
\end{equation}
Then, setting  $A'=A-\nabla \varphi$,  using the fact that $A_{|
\Gamma}=\varphi_{|\Gamma}=0,$ one gets
$$A'\wedge \nu=A\wedge \nu-\nabla \varphi\wedge\nu=0,\,\,\,\,\,\mbox{and}\,\,\,\,\,\,\,\mbox{div}A'=0. $$
In order to prove (\ref{Equation C3.37}), we argue by
contradiction. We assume that for all $k\geq 1$  there exists a non-null
$\widetilde{A'}_{k} \in W^{1,p}(\Omega)$ such that
\begin{equation}\label{Equation C3.39}
\|\widetilde{A'_{k}}\|_{W^{1,p}(\Omega)}\geq k\,\|\mbox{curl}\,\widetilde{A'}_{k}\|_{L^{p}(\Omega)}.
\end{equation}
We set
$A'_{k}=\displaystyle\frac{\widetilde{A'}_{k}}{\|\widetilde{A'}_{k}\|_{W^{1,p}(\Omega)}}.$
Then we have $ \|A'_{k}\|_{W^{1,p}(\Omega)}=1$ and $k
\,\|\mbox{curl}\,A'_{k}\|_{L^{p}(\Omega)}\leq 1.$
 In view of the weak compactness theorem, there exists a subsequence of $(A'_{k})_{k}$ such that
$A'_{k}\rightharpoonup A'\,\,\,\, \mbox{in} \,\,W^{1,p}(\Omega).$ Using
the fact that $W^{1,p}(\Omega)\hookrightarrow L^{p}(\Omega),$ we deduce that
$A'_{k}\rightarrow A' \,\,\,\,\,\mbox{in} \,\, L^{p}(\Omega).$ As a
consequence, we have
$$
\|A'\|_{W^{1,p}(\Omega)}=1\,\,\,\,\,\,\mbox{and}\,\,\,\,\|\mbox{curl}\,A'\|_{L^{p}(\Omega)}=0.
$$
This  entails that  there exists
$\eta\in W^{1,p}(\Omega)$ such that $A'=\nabla \eta$. Then, using  the fact that div $A'=0$
and $A'\wedge \nu=0$, we deduce that there exists a constant $\lambda\in \R$
such that
$$\left\{
  \begin{array}{ll}
    \Delta \eta=0, & \mbox{in}\,\, \Omega \\
    \eta=\lambda, & \mbox{in} \,\,\Gamma.
  \end{array}
\right.
$$
Finally, using the fact that $\Omega$ is a simply connected domain we
conclude that $\eta=\lambda$ in $\overline{\Omega}$.  This entails that
$A'=0$ and contradicts the fact that $\|A'\|_{W^{1,p}(\Omega)}=1$.
\end{proof}
As a consequence of Lemma \ref{lm6.4}, we have the following result
\begin{Lemm}\label{rot}
Let $\Omega\subset\R^{n}$ be a simply connected domain, and let $A\in
\mathcal{C}^{2}(\Omega,\R^{n})$ such that $A_{| \Gamma}=0$. If we further
assume that div $A=0$, then the following estimate
  $$
\|A\|_{W^{1,p}(\Omega)}\leq C\, \|\mbox{curl} \,A\|_{L^{p}(\Omega)},
$$
holds true for some positive constant $C$ which is independent of $A$. 
\end{Lemm}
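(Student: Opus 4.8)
The plan is to obtain Lemma \ref{rot} as an immediate consequence of Lemma \ref{lm6.4}. Fix $p>n$ and apply Lemma \ref{lm6.4} to the given field $A\in\mathcal{C}^{2}(\Omega,\R^{n})$ with $A_{|\Gamma}=0$: this produces a function $\varphi\in\mathcal{C}^{3}(\Omega)$ with $\varphi_{|\Gamma}=0$ and a vector field $A'\in W^{1,p}(\Omega,\R^{n})$ such that $A=A'+\nabla\varphi$, $A'\wedge\nu=0$, $\mbox{div}\,A'=0$, together with the estimate $\|A'\|_{W^{1,p}(\Omega)}\leq C\,\|\mbox{curl}\,A'\|_{L^{p}(\Omega)}$ for a constant $C$ depending only on $\Omega$ and $p$.

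The only point to verify is that, under the additional hypothesis $\mbox{div}\,A=0$, the scalar potential $\varphi$ vanishes identically. Indeed, in the construction carried out in Lemma \ref{lm6.4} the function $\varphi$ solves the Dirichlet problem $\Delta\varphi=\mbox{div}\,A$ in $\Omega$ with $\varphi=0$ on $\Gamma$; since now $\mbox{div}\,A=0$, this reduces to $\Delta\varphi=0$ in $\Omega$, $\varphi_{|\Gamma}=0$, so $\varphi\equiv 0$ by uniqueness for the homogeneous Dirichlet problem. Hence $A=A'$ and $\mbox{curl}\,A=\mbox{curl}\,A'$, and the estimate of Lemma \ref{lm6.4} becomes exactly $\|A\|_{W^{1,p}(\Omega)}\leq C\,\|\mbox{curl}\,A\|_{L^{p}(\Omega)}$, with $C$ independent of $A$ as required.

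There is essentially no obstacle here, the statement being a direct corollary; the only mild subtlety is noting that the Hodge-type decomposition supplied by Lemma \ref{lm6.4} is gauge-trivial ($\varphi=0$) precisely when $A$ is already divergence free. If one wished for a route not citing Lemma \ref{lm6.4}, one could instead start from the $L^{p}$ div--curl elliptic estimate $\|A\|_{W^{1,p}(\Omega)}\leq C\bigl(\|\mbox{curl}\,A\|_{L^{p}(\Omega)}+\|\mbox{div}\,A\|_{L^{p}(\Omega)}+\|A\|_{L^{p}(\Omega)}\bigr)$, valid for fields with $A\wedge\nu=0$ on $\Gamma$, drop the divergence term by hypothesis, and eliminate the zeroth-order term by the same weak-compactness argument used in the proof of Lemma \ref{lm6.4}, the simple connectedness of $\Omega$ forcing the limiting field to be the gradient of a harmonic function vanishing on $\Gamma$, hence zero.
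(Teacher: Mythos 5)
Your proposal is correct and matches the paper exactly: the paper presents Lemma \ref{rot} as an immediate consequence of Lemma \ref{lm6.4}, and the detail you supply (that $\operatorname{div} A=0$ forces the scalar potential $\varphi$, which solves $\Delta\varphi=\operatorname{div} A$ with $\varphi_{|\Gamma}=0$, to vanish, so $A'=A$) is precisely the intended argument. No issues.
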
%


\textbf{Acknowledgements}
The author would like to thank Pr. Mourad Bellassoued and Pr. Eric Soccorsi for many helpful suggestions they made and for their careful reading of the manuscript.

\end{document}